\providecommand{\U}[1]{\protect\rule{.1in}{.1in}}
\providecommand{\U}[1]{\protect\rule{.1in}{.1in}}
\newtheorem{theorem}{Theorem}
\newtheorem{example}{Example}
\newtheorem{lemma}[theorem]{Lemma}
\newtheorem{remark}[theorem]{Remark}
\newenvironment{proof}[1][Proof]{\noindent\textbf{#1.} }{\ \rule{0.5em}{0.5em}}
\begin{document}
\stepcounter{section} 
\title{Numerical Solution of Nonlinear Abel Integral Equations: An $ hp $-Version Collocation Approach}
\author{Raziyeh Dehbozorgi\thanks{Institute for Advanced Studies in Basic Sciences,
		Zanjan, Iran e-mail:
		\texttt{r.dehbozorgi2012@gmail.com} } \and
	Khadijeh Nedaiasl\thanks{Institute for Advanced Studies in Basic Sciences,
Zanjan, Iran, e-mail: \texttt{nedaiasl@iasbs.ac.ir  \&   knedaiasl85@gmail.com} }}
\maketitle
\begin{abstract}
	This paper is concerned with the numerical solution  for a class of  nonlinear weakly singular Volterra integral equation of the first kind.  The existence and uniqueness issue of the nonlinear Abel integral equations is studied completely. An $hp$-version  collocation method in conjunction with Jacobi polynomials is introduced so as an appropriate  numerical solution to be found. 
	 We analyze it properly and find an error estimation in $L^2$-norm. The efficiency  of the method is illustrated by some numerical experiments. 
\end{abstract}
\vspace{1em} \noindent\textbf{Keywords:} {nonlinear operator, first kind Volterra integral equation, weakly singular operator,  $hp$-version collocation method, error analysis.  }

\vspace{1em} \noindent\textbf{2010 Mathematics Subject Classification: 45H30; 45D05; 65L60; 65L70. }

\section{Introduction}
This paper deals with the numerical solution of the nonlinear weakly  singular Volterra integral equation of the first kind 
\begin{equation}\label{asli}
\mathcal{K}u(t)	:=\int_{0}^{t} (t-s)^{\alpha-1}\kappa(t,s) \psi(s,u(s))\mathrm{d}s=f(t), \quad  0<\alpha\leq 1, \quad 0\leq t\leq T < \infty,
\end{equation}
 where $k(t,s)$, $\psi(s,u(s))$ in the kernel and $f(t)$ the right-hand side term are  known and $u(t)$ is the unknown to be determined.
This equation could be expressed by an operator notation in a more  general form 
\begin{equation}
	\mathcal{K}u(t):=\int_{0}^{t} (t-s)^{\alpha-1}\kappa(s,t,u(s))\mathrm{d}s=f(t), \quad  t \in \Omega:=[0,T],
\end{equation}
where the operator $ \mathcal{K}u $ is nonlinear Abel integral operator. 

 From Niels Henrik Abel's endeavor to generalize the tautochrone problem  till today applications of Abel integral equations such as
the practical physical models originating  from
 spectroscopy, astrophysics (cf. \cite{gorenflo91}, \cite{TTot}, \cite{von2014bayesian})
 and  inverse problems arising in image reconstruction \cite{engl1996regularization}, there is a long way which signifies the importance of these equations. In addition, 
 the fractional differential (FD) operators in one dimensional domains are mainly defined by these   operators;  so,  the surge of FD equations 
 in research and application  is a leading  motivation for further research in the numerical solutions of the generalized Abel integral equations. 
 
 In the literature, the majority of the existing numerical approaches deal with the smooth kernel in linear form or the especial case $\alpha=\frac{1}{2}.$ Moreover,  in order to obtain an efficient approximation, one should increase the number of  mesh points ($ h$-version) or the degree of polynomials in the expansion ($p$-version).
In order to employ both beneficial features of $h$- and $p$- versions simultaneously,  we investigate the $hp$-version collocation method to achieve an appropriate solution 
for nonlinear Abel integral equations. For this aim, it is necessary to  express the existence and uniqueness of the solution  which is investigated properly in this paper. 

Due to efficiency and accuracy, the $hp$-version Galerkin and collocation methods have received considerable attentions. For example, the $ hp $-version of discontinuous Galerkin and Petrov-Galerkin have been studied for integro-differential equations of Volterra types ( for more details see \cite{mustapha,MR3266490}).
Sheng et al. have introduced a multi-step Legendre-Gauss spectral collocation method and given a comprehensive analysis of convergence in $ L^2$-norm  for the nonlinear Volterra integral equations of the second kind \cite{sheng}. This approach has been extended to the Volterra integral and integro-differential equations with vanishing delays \cite{MR3434874,MR3488071}. Locally varying time steps makes these methods popular for investigating the numerical solution of integral equations with weakly singular kernels \cite{MR3673690}. 
Recently, the $ hp$-version collocation method is studied for nonlinear Volterra integral equation of the first kind \cite{nedai} which is our starting point to develop the results for the nonlinear Abel integral equations.

%-------------------------
\subsection{Relevant works}\label{rel}
Abel integral equations are of great importance  due to the presence of the singularity and being as an inverse problem. Aside these difficulties, nonlinear form of these equations needs more attention.
 There are some studies on the existence and uniqueness of the solution for Abel type integral equations. In Section 1.5 of \cite{brunner2017volterra}, Brunner seeks a unique solution in the space of continuous functions on $\Omega$ for Eq. \eqref{asli} for the linear case. In the lecture  note by Gorenflo and  Vessella \cite{gorenflo91}, the existence and uniqueness issue of the nonlinear Abel integral equations of the second kind is studied. Furthermore, some results on the regularity of the solution for the first kind nonlinear Abel integral equations in the spaces $C(\Omega)$ and $L^2(\Omega)$ are reported in  \cite{branca1978} and \cite{ANG199463}. In this paper, we attempt to study the well-posedness of the problem in some weighted Sobolev spaces.

 The product integration  and  adaptive Huber methods can solve the linear case of Eq. \eqref{asli} locally \cite{cameron84,bieniasz08}.  
 A Nystr\"{o}m-type	method  based on the trapezoidal and composite trapezoidal rules is analyzed for the Abel integral equation in \cite{eggermont1981new} and \cite{plato12}, respectively. The aforementioned schemes are utilized for linear Abel integral equations. 
In spite of the abundant research concerned with the numerical analysis of the linear second-kind of weakly singular integral equations \cite{eggermont1981new,plato12,cameron84,bieniasz08}, the numerical analysis of  nonlinear Abel integral equations is scarce. Due to the weak singularity and nonlinearity, the numerical methods for this class of the integral equations are less dealt with, for instance two specific research on this subject are \cite{ANG199463, branca1978}. 	  
	In \cite{branca1978}, Branca utilizes the interpolation quadrature technique with linear and quadratic polynomials in order to approximate the nonlinear Abel integral equation.  
Here, we develop the idea of interpolation quadrature technique in conjunction with global methods to approximate the numerical solution of nonlinear Abel integral equation in an efficient way. 

\subsection{Our contribution} 
	An important aspect of this method is its flexibility with respect to the step size and the order of polynomials in each sub-interval. As we will identify in the numerical experiments, the proposed collocation method works well for the approximation of the equations with non-smooth solutions. The main features of this paper are as follows: 
	\begin{description}
		\item[1. ] Developing the $hp$-version collocation method for the weakly singular integral equations of the first kind is one of the aspects of the present manuscript. 
		In accordance with the heuristic of the scheme which converts the first kind integral equation into the second one, without any restriction on the kernel, the discretized equation becomes more regular.  
		Not only does the scheme overcome the difficulty of the nonlinear term in these equations which are rarely investigated, but also the local view point of scheme makes it a powerful tool for better approximating, especially when the unknown solution is non-smooth.   
		
		\item [2. ] The  Jacobi polynomials are utilized to derive an exact quadrature formula for the integrals with weakly singular integrand which can be accounted as a merit of these polynomials.  Being adjustable, the parameters $M$ and $N$, which are related to $h$- and $p$-versions, cause  the scheme to be  more accurate and applicable. 	
		Furthermore, the error estimation for the presented schemes is  analyzed in the sense  of $L^2$-norms and the numerical results are compatible with these findings.

		\item[3. ] Investigating the numerical solution for various $\alpha$'s, non-smooth kernel functions $\kappa$ and the right hand-side functions $f(t)$ makes this scheme so conducive and persuasive regarding the real-word problems. 
		Moreover, in order to reduce the computational cost, we introduce an adaptive algorithm in the approximation procedure to show the efficiency of the method well (for more details, see Remark \ref{rem3}).
\end{description}
	
	This paper is organized in the following way. In Section \ref{sec 2} we give some regularity results for the nonlinear Abel integral equation of the first kind. Section \ref{prelim} is devoted to the description of the $ hp$-version collocation method for the first kind weakly singular nonlinear Volterra integral equation. In Section \ref{error analysis}, an error analysis of the proposed method is provided in some suitable Hilbert spaces. 
	Finally, in order to show the applicability and efficiency of the method and compare it with other methods, several examples with smooth and non-smooth solutions are illustrated in Section \ref{numerical experiments}.

\section{Theoretical treatment of the problem}
\label{sec 2} In order to find out a suitable numerical scheme for the solution of Eq. (\ref{asli}), knowledge of the behavior of the exact solution is important. The kernel of the integral equation  contains a singular term, for the reason that weighted Lebesgue spaces are   utilized as  the suitable functional spaces. For this aim, 
let us define the weight function $ \chi^{\alpha, \beta}(x) :=(1-x)^{\alpha}(1+x)^{\beta} $ on the interval $ \Lambda :=[-1,1] $ for $ \alpha, \beta >-1 $. For $ r \in \mathbb{N}$, $ H^{r}_{\chi^{\alpha,\beta}}(\Lambda) $ is a weighted Sobolev space defined by
\[
H^{r}_{\chi^{\alpha,\beta}}(\Lambda) = \Big\{ v ~|~ v ~\text{is measurable and } \Vert v \Vert_{r, \chi^{\alpha, \beta}} < \infty \Big\},
\]
where  
\[
\Vert v \Vert_{r, \chi^{\alpha, \beta}} = \Big( \sum_{k=0}^{r} \vert v \vert_{k, \chi^{\alpha, \beta}}^2 \Big)^\frac{1}{2}.
\]
The above semi-norm is defined as 
$
\vert v \vert_{k, \chi^{\alpha, \beta}} = \Vert \partial^{k}_{x}v \Vert_{\chi^{\alpha +r, \beta+r}},
$
where  $ \Vert . \Vert_{\chi^{\alpha, \beta}}  $ is an appropriate norm for the space $ L^{2}_{\chi^{\alpha, \beta}}(\Lambda)$. 
For arbitrary real number $  r=[r]+\theta $ with $ \theta\in (0,1)$, $ H^{r}_{\chi^{\alpha,\beta}}(\Lambda) $ can be defined by the interpolation space  as \[H^{r}_{\chi^{\alpha,\beta}}(\Lambda)=[H^{[r]}_{\chi^{\alpha,\beta}}(\Lambda),H^{[r]+1}_{\chi^{\alpha,\beta}}(\Lambda)]_\theta. \]
More details can be seen in  \cite{lilian,lions2012non}. 

We need some definitions from fractional calculus. The Riemann-Liouville integral operator $_{0}\mathcal{I}_{x}^{r}$ is defined as follows
	\begin{equation}
	_{0}\mathcal{I}_{x}^{r}u(x) = \int_{0}^{x}(x-t)^{r -1}u(t)\mathrm{d}t,
	\end{equation}
	and  $ _0 D_x^ r$,	the Riemann-Liouville fractional derivative of order $ r $ for a function $u \in H^n(\Omega)$ can be defined as
	\begin{equation}
		_{a}D_x^r u=D^{n}\, _{0}\mathcal{I}_{x}^{n -r}u,
	\end{equation}
	where the operator $D^{n}$ denotes the classical derivative of order $n$ \cite{diethelm2010analysis}.
%------------------------------------------------

 As discussed in \ref{rel}, the well-posedness of the problem \eqref{asli} have been investigated in the spaces $C(\Omega)$ and $L^2(\Omega)$ by Branca, Ang and Gorenflo  \cite{branca1978,ANG199463}.  
In the following theorem, some adequate assumptions are given in order to have a unique solution for \eqref{asli} in the weighted Sobolev spaces  $H^{^{m-1}}_{\chi^{\alpha-1,0}}(\Omega)$. 

\begin{theorem}\label{thm1}
	Assume that the Eq. \eqref{asli} satisfies the following assumptions
	\begin{enumerate}[i.]
		\item $f(t)\in H^{^m}_{\chi^{\alpha-1,0}}(\Omega), ~ f(0)=0$, \label{item1}
		\item $\kappa(s,t) \in C^{^{m}}(\Omega\times \Omega)$ and $ \kappa(t,t)\neq 0$ $\text{for all } t \in \Omega$, \label{item2}
		\item $\psi(s,u)\in H^{m-1}_{\chi^{\alpha-1,0}}(\Omega\times \mathbb{R})$,\label{item3}
		\item $\inf \Big\{  \vert \frac{\partial \psi }{\partial u}(s,u) \vert \ \big| (s,u) \in \Omega \times \mathbb{R}\Big\} \geq M>0$, \label{item4}
		\item $\psi (s,u)$ is Lipschitz continuous w.r. to $ u$, \label{item5}
		\item let $k(t)= \int_{0}^t \int_x^t (t-y)^{-\alpha}(y-x)^{\alpha-1}\kappa(y,x)\psi(x,u(x))\,\mathrm{d}y \,\mathrm{d}x$, then $k\in H^{^{m-1}}_{\chi^{\alpha-1,0}}(\Omega)$.  
		\label{item6}
	\end{enumerate}
	Then it has a unique solution $ u $ in $H^{^{m-1}}_{\chi^{\alpha-1,0}}(\Omega)$.
\end{theorem}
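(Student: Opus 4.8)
The plan is to convert the first‑kind Abel equation \eqref{asli} into an equivalent nonlinear second‑kind Volterra equation by fractional integration, and then to solve the latter by a contraction‑mapping argument whose regularity is controlled by the weighted‑Sobolev hypotheses. First I would apply the operator ${}_{0}\mathcal{I}_{t}^{1-\alpha}$ to both sides of \eqref{asli}; writing out the iterated integral, interchanging the order of integration (Dirichlet's formula) and substituting $y=x+(t-x)\sigma$ turns the weakly singular double integral into
\[
{}_{0}\mathcal{I}_{t}^{1-\alpha}(\mathcal{K}u)(t)=\int_{0}^{t}G(t,x)\,\psi\bigl(x,u(x)\bigr)\,\mathrm{d}x,\qquad
G(t,x):=\int_{0}^{1}\sigma^{\alpha-1}(1-\sigma)^{-\alpha}\,\kappa\bigl(x+(t-x)\sigma,\,x\bigr)\,\mathrm{d}\sigma,
\]
which is precisely the function $k(t)$ of assumption (\ref{item6}); note that $k={}_{0}\mathcal{I}_{t}^{1-\alpha}f$ because $\mathcal{K}u=f$. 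Assumption (\ref{item2}) ($\kappa\in C^{m}$) makes $G$ smooth, with $G(x,x)=B(\alpha,1-\alpha)\,\kappa(x,x)=\Gamma(\alpha)\Gamma(1-\alpha)\,\kappa(x,x)$ and $\partial_{t}G$ bounded and continuous on $\Omega\times\Omega$. Differentiating $k={}_{0}\mathcal{I}_{t}^{1-\alpha}f$ in $t$ (here $f(0)=0$ from (\ref{item1}) is used, so that $k'={}_{0}D_{t}^{\alpha}f$ has no singular boundary term) then gives the second‑kind equation
\[
\Gamma(\alpha)\Gamma(1-\alpha)\,\kappa(t,t)\,\psi\bigl(t,u(t)\bigr)+\int_{0}^{t}\partial_{t}G(t,x)\,\psi\bigl(x,u(x)\bigr)\,\mathrm{d}x=k'(t),\qquad t\in\Omega.
\]
Integrating this relation back and invoking $f(0)=0$ together with the injectivity of ${}_{0}\mathcal{I}_{t}^{1-\alpha}$ (Abel inversion) shows that it is in fact equivalent to \eqref{asli}.

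Since $t\mapsto\kappa(t,t)$ is continuous and nowhere zero on the compact set $\Omega$ by (\ref{item2}), it is bounded away from $0$, so dividing the last equation by $\Gamma(\alpha)\Gamma(1-\alpha)\,\kappa(t,t)$ is legitimate. By (\ref{item4}) the map $u\mapsto\psi(s,u)$ has derivative of modulus $\ge M>0$, hence for each $s$ it is a strictly monotone bijection of $\mathbb{R}$ whose inverse $\Phi(s,\cdot)$ is Lipschitz in its second argument with constant $\le 1/M$. Solving for $\psi(t,u(t))$ and applying $\Phi(t,\cdot)$ recasts the problem as the fixed‑point equation
\[
u(t)=\Phi\!\left(t,\ \frac{1}{\Gamma(\alpha)\Gamma(1-\alpha)\,\kappa(t,t)}\Bigl(k'(t)-\int_{0}^{t}\partial_{t}G(t,x)\,\psi\bigl(x,u(x)\bigr)\,\mathrm{d}x\Bigr)\right)=:(\mathcal{T}u)(t).
\]
Combining the Lipschitz continuity of $\psi$ in $u$ (assumption (\ref{item5})), that of $\Phi$, the boundedness of $\partial_{t}G$ and the Volterra structure of the integral term yields $\bigl|(\mathcal{T}u)(t)-(\mathcal{T}w)(t)\bigr|\le C\int_{0}^{t}\bigl|u(x)-w(x)\bigr|\,\mathrm{d}x$, so some iterate of $\mathcal{T}$ is a contraction in the sup‑norm (equivalently $\mathcal{T}$ is a contraction for the norm $\|v\|=\sup_{t}e^{-Lt}|v(t)|$ with $L$ large); Banach's fixed‑point theorem then gives a unique solution.

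To place this solution in $H^{m-1}_{\chi^{\alpha-1,0}}(\Omega)$, I would run the same contraction on that space, using: $k'\in H^{m-1}_{\chi^{\alpha-1,0}}(\Omega)$ by assumptions (\ref{item1}) and (\ref{item6}); multiplication by $1/\kappa(t,t)\in C^{m}(\Omega)$ and the Volterra operator with the smooth kernel $\partial_{t}G$ leave $H^{m-1}_{\chi^{\alpha-1,0}}(\Omega)$ invariant; and the substitution operators $u\mapsto\psi(\cdot,u(\cdot))$ and $v\mapsto\Phi(\cdot,v(\cdot))$ preserve $H^{m-1}_{\chi^{\alpha-1,0}}$‑regularity, which follows from the Sobolev hypothesis $\psi\in H^{m-1}_{\chi^{\alpha-1,0}}$ (assumption (\ref{item3})) and the lower bound on $\partial_{u}\psi$ via composition (Moser‑type) estimates and a Sobolev inverse‑function argument. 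Uniqueness in this space then follows from the contraction property itself.

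The routine part here is the Banach iteration; the genuine obstacle is the weighted‑Sobolev bookkeeping. One must make precise the mapping properties of ${}_{0}\mathcal{I}_{t}^{1-\alpha}$ and of the auxiliary Volterra operator on the scale $H^{r}_{\chi^{\alpha-1,0}}(\Omega)$ with the correct shift of indices, check that $k'$ lands in $H^{m-1}_{\chi^{\alpha-1,0}}(\Omega)$ under (\ref{item1}) and (\ref{item6}), and establish the stability of composition with $\psi$ and with its $u$‑inverse on these weighted spaces; this is exactly where the exponents in assumptions (\ref{item1}), (\ref{item3}) and (\ref{item6}) are consumed, and it is the step I expect to require the most care.
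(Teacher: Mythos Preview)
Your proposal is correct and follows essentially the same route as the paper: both convert \eqref{asli} to an equivalent second-kind Volterra equation by fractional integration/differentiation (the paper's $\mathcal{L}$ is your $G$), invert $\psi(t,\cdot)$ via the strict monotonicity from (\ref{item4}), and then obtain existence and uniqueness by Picard iteration---the paper writes out the explicit bound $|u_{n+1}-u_n|\le (JL/M)^{n}t^{n}/n!$, which is exactly the estimate underlying your iterated-contraction/weighted-norm version of Banach's theorem. The weighted-Sobolev bookkeeping you flag as the delicate step is handled in the paper by citing a mapping result for ${}_{0}D_{x}^{\alpha}$ on $H^{r}_{\chi^{\alpha-1,0}}$ (to place $k'$ in $H^{m-1}_{\chi^{\alpha-1,0}}$) together with assumption (\ref{item6}) for the integral term, so your identification of where the hypotheses are consumed is on target.
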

\begin{proof}
	In advance, let us convert the main problem \eqref{asli} to the second kind integral equation by multiplying both sides into $ (x-t)^{-\alpha} $ and taking integration; hence, from the assumption  (\textit{\ref{item1}}), Eq. \eqref{asli} reads that 
	\begin{equation}\label{sec}
	\psi(t,u(t))+\int_0^t \dfrac{\mathcal{L}_t(t,s,u(s))}{\kappa(t,t)}\mathrm{d}s=\dfrac{(_0 D_x^\alpha f)(t)}{\kappa(t,t)},
	\end{equation}     
	where 
	\[\mathcal{L}(t,x,u(x))=\int_x^t (t-y)^{-\alpha}(y-x)^{\alpha-1}\kappa(y,x)\psi(x,u(x))\,\mathrm{d}y.\]
	By the new variable $y=x+\tau(t-x)$, the operator $ \mathcal{L} $ could be written as 
	\[
\mathcal{L}(t,x,u)=	\int_{0}^{1} (1-\tau)^{-\alpha}\tau^{\alpha-1}\kappa(x+\tau(t-x),x)\psi(x,u(x))\mathrm{d}\tau.
	\]
	It is evident that $ \mathcal{L}(x,x,u)=\kappa(x,x)\psi(x,u) $  and 
	\begin{equation}
	\mathcal{L}_{t}(t,x,u) =\psi(x,u(x))\, k^{*}(t,x),
	\end{equation} 
	is continuous  on $ \Omega\times \mathbb{R} $ and  Lipschitz continuous with respect to $ u $ with the same constant as for $ \psi $ and $ k^{*}(t,x) = \int_{0}^{1} (1-\tau)^{-\alpha}\tau^{\alpha}\kappa_{t}(x+\tau(t-x),x)\mathrm{d}\tau $.
Conditions (\textit{\ref{item1}})-(\textit{\ref{item3}}) lead that each function $ u(t)$ is a solution of Eq. \eqref{sec} if and only if it is a solution of Eq. \eqref{asli}.
	In order to prove the existence of a solution for Eq. \eqref{sec}, we trace \cite{MR847018} and  define the sequence $ \{u_n(t)\}_{n\in \mathbb{N}}$ as follows:
	\begin{equation}
	\begin{split}
	\psi(0,u_0(t))&:=\frac{(_0 D_x^\alpha f)(0)}{\kappa(0,0)},\\
	\psi(t,u_{n+1}(t))&:=\frac{(_0 D_x^\alpha f)(t)}{\kappa(t,t)}-\int_{0}^{t} \dfrac{\mathcal{L}_t(t,s,u_{n}(s))}{\kappa(t,t)}\mathrm{d}s, \quad n\geq 1.
	\end{split}
	\end{equation} 
	By the assumptions (\textit{\ref{item3}})  and (\textit{\ref{item4}}), the  function $ \psi(t,u(t)) $ is strictly monotonic continuous
	with respect to $ u $. So by considering the Inverse Theorem \cite[p. 68]{courant}, $ u_0 $ is well-defined and belongs to  $ H^{^{m-1}}_{\chi^{\alpha-1,0}}(\Omega)$.
	Now, using induction hypothesis,  $ u_n $ is well-defined and belongs to $  H^{^{m-1}}_{\chi^{\alpha-1,0}}(\Omega). $
	From the assumption (\textit{\ref{item1}}), it is deduced that $f'\in H^{^{m-1}}_{\chi^{\alpha-1,0}}([0,x])$. On the other hand, from the fractional calculus we get that 
	\[
	(_0 D_x^\alpha f)(x)=\frac{\mathrm{d}}{\mathrm{d}x}\int_{0}^{x}\frac{f(t)}{(x-t)^{\alpha}}\mathrm{d}t= \int_{0}^{x}\frac{f'(t)}{(x-t)^{\alpha}}\mathrm{d}t+\frac{x^{-\alpha}}{\Gamma(1-\alpha)}f(0).
	\]
	 Now, utilizing $f(0)=0$ and Theorem 3.1 of the paper \cite{jin2015}, it is concluded that  $\int_{0}^{x}\frac{f'(t)}{(x-t)^{\alpha}}\mathrm{d}t \in H^{^{m-\alpha}}_{\chi^{\alpha-1,0}}(\Omega)$, which is a subset of $ H^{^{m-1}}_{\chi^{\alpha-1,0}}(\Omega) $.
	From the above argument and the assumption  (\textit{\ref{item6}}), we deduce that the function
	\[ \frac{(_0 D_x^\alpha f)(t)}{\kappa(t,t)}-\int_0^t \frac{\mathcal{L}_t(t,s,u_{n}(s))}{\kappa(t,t)}\mathrm{d}s, \]
	belongs to $  H^{^{m-1}}_{\chi^{\alpha-1,0}}(\Omega). $ Hence, by the Inverse Theorem,  $ u_{n+1}\in H^{^{m-1}}_{\chi^{\alpha-1,0}}(\Omega) $. 
	Using the assumptions (\textit{\ref{item4}}) and (\textit{\ref{item5}})
	one can conclude that 
	\[\vert u_{n+1}(t)-u_n(t)\vert \leq \Big(\frac{JL}{M}\Big)^n\frac{t^n}{n!}\max\limits_{s \in \Omega} \vert u_1(s)-u_0(s)\vert,\]
	where $ L $ is the Lipschitz constant in the assumption (\textit{\ref{item5}}) and $J:=\max \big\{ \vert \frac{k^*(t,s)}{k(t,t)} \vert ~\big|~ (t,s) \in \Omega \times \Omega \big\}$. Therefore, without loss of generality for $ m>n,$
	\[\vert u_{m}(t)-u_n(t)\vert \leq \sum_{i=n}^{m-1}\vert u_{i+1}(t)-u_i(t)\vert\leq \Vert u_1(t)-u_0(t)\Vert_{\infty}\sum_{i=n}^{m-1}\Big(\frac{JL T}{M}\Big)^{^i}\frac{1}{i!}.\]
	The term $\sum\limits_{i=0}^{\infty}(\frac{JL T}{M})^{^i}\frac{1}{i!}  $ is convergent, so the Cauchy sequence $ \lbrace u_n\rbrace $ is convergent uniformly to
	\[\lim\limits_{n\rightarrow \infty}u_n(t)=u(t),\]
	where $ u(t) $ belongs to $  H^{^{m-1}}_{\chi^{\alpha-1,0}}(\Omega).$ This result follows from the fact that  $ u_n(t)\in   H^{^{m-1}}_{\chi^{\alpha-1,0}}(\Omega)$.
\end{proof}
\section{Numerical scheme}\label{prelim}
In this section, we introduce an $hp$-version Jacobi collocation method for nonlinear weakly singular integral equations of the first kind. In order to get a self-contained paper, some basic properties of the shifted Jacobi-Gauss  and Legendre-Gauss-Lobatto polynomial interpolations are introduced in the following subsection.
\subsection{Preliminaries}
{\bf The shifted Jacobi-Gauss interpolation operator.}   
Let us denote the standard Jacobi polynomial of degree $k$ by $J^{\alpha, \beta}_{k}(x)$, for $\alpha, \beta > -1$.  It is well-known that the set of Jacobi polynomials makes a complete orthogonal system with respect to the  weight function $\chi^{\alpha, \beta}(x)$ which means that 
\begin{equation}\label{orth}
\int_{\Lambda}J^{\alpha, \beta}_{k}(x)J^{\alpha, \beta}_{j}(x)\chi^{\alpha, \beta}(x)\mathrm{d}x = \gamma_{k}^{\alpha, \beta}\delta_{k,j},  
\end{equation} 
wherein $\delta_{k,j}$ is the Kronecker function, and 
\[ \gamma^{\alpha, \beta}_{k}=\begin{cases} 
\frac{2^{\alpha+\beta+1}\Gamma(\alpha+1)\Gamma(\beta+1)}{\Gamma(\alpha+\beta+2)}, & k= 0, \\
\frac{2^{\alpha+\beta+1}}{2k+\alpha+\beta+1}\frac{\Gamma(k+\alpha+1)\Gamma(k+\beta+1)}{k!\Gamma(k+\alpha +\beta+1)}, & k \geq 1. 
\end{cases}
\]
In order to work with these polynomials on the sub-intervals $\Omega_n$ properly, the shifted Jacobi polynomial of degree $k$  is also defined as follows
\begin{equation}
J^{\alpha, \beta}_{n,k}(t)=J_{k}^{\alpha, \beta}(\frac{2t-t_{n-1}-t_{n}}{h_{n}}), \quad t \in \Omega_n, \quad k\geq 0. 
\end{equation}
It is worth to mention that the set of shifted Jacobi polynomials constructs a complete orthogonal system with the weight function  $\chi_{n}^{\alpha, \beta}(t)= (t_{n}-t)^\alpha(t-t_{n-1})^{\beta}$. Similar to the relation \eqref{orth},  we can write 
 \begin{equation}\label{jabc}
 \int_{\Omega_n}J_{n,k}^{\alpha, \beta}(t)J_{n,j}^{\alpha, \beta}(t)\chi_{n}^{\alpha, \beta}(t)\mathrm{d}t
= (\frac{h_{n}}{2})^{\alpha+\beta+1} \gamma_{k}^{\alpha,\beta}\delta_{k,j}. 
\end{equation}
Let $x^{\alpha, \beta}_{n,j}$ be the zeros of the standard Jacobi polynomial of degree $k$ for $0\leq j \leq M_{n}$ and  $\omega^{\alpha, \beta}_{n,j}$ be the corresponding Christoffel numbers. Then we can define the shifted Jacobi-Gauss quadrature points on the interval $\Omega_n$ as follows
\begin{equation}
t^{\alpha, \beta}_{n,j}=\frac{1}{2}(h_{n}x_{n,j}^{\alpha, \beta}+t_{n-1}+t_{n}), \quad 0\leq j \leq M_{n}. 
\end{equation}
Let $\mathcal{P}_{M}(\Omega)$ be the set of all polynomials of degree at most $M$ on $\Omega$. It is known  from \cite{lilian,guo} that for any $\phi (t) \in \mathcal{P}_{2M_{n}+1}(\Omega_n)$ 
\begin{equation}\label{jab}
\int_{\Omega_n}\phi(t)\chi_{n}^{\alpha, \beta}(t)\mathrm{d}t = (\frac{h_{n}}{2})^{\alpha+\beta+1}\sum_{j=0}^{M_{n}}\phi(t^{\alpha, \beta}_{n,j})\omega_{n,j}^{\alpha, \beta},
\end{equation}
which leads to the result
\begin{equation}\label{jac3}
\sum_{j=0}^{M_{n}}J^{\alpha, \beta}_{n,p}(t^{\alpha, \beta}_{n,j})J^{\alpha, \beta}_{n,q}(t^{\alpha, \beta}_{n,j})\omega_{n,j}^{\alpha, \beta}=\gamma_{p}^{\alpha, \beta} \delta_{p,q}.
\end{equation}
For any $v \in C(\Omega_n)$,  the shifted Jacobi-Gauss interpolation operator in the $t$-direction is defined  as follows
\begin{equation}\label{inter}
\mathcal{I}_{t, M_{n}}^{\alpha ,\beta}v(t^{\alpha, \beta}_{n,j})=v(t^{\alpha, \beta}_{n,j}), \quad 0\leq j \leq M_{n}, 
\end{equation}
 and the following lemma reports an upper bound for the interpolation by \eqref{inter}. 
\begin{lemma}(\cite{wang2017})\label{lem1}
	For any ${v}\in H^{^m}_{\chi_n^{\alpha,\beta}}(\Omega_n)$ with integer $1\leq m \leq M_n+1$ and $\alpha,\beta>-1,$ we get
	\[\Vert {v}-\mathcal{I}_{x,M_n}^{\alpha,\beta} {v}\Vert_{\chi_n^{\alpha,\beta}}\leq c\, \sqrt{\dfrac{\Gamma(M_n+2-m)}{\Gamma(M_n+2+m)}}\Vert \partial^m_x {v}\Vert_{\chi_n^{\alpha+m,\beta+m}}.\]
%	where $ H^{^m}_{\chi^{\alpha,\beta}}(\Omega_n)$ is defined in Section \ref{sec 2} with the weight 
%	$ \chi_n^{\alpha,\beta}=(t_n-t)^\alpha(t-t_{n-1})^\beta.$
 In particular, for any fixed $ m,$ we obtain 
	\[\Vert {v}-\mathcal{I}_{x,M_n}^{\alpha,\beta} {v}\Vert_{\chi_n^{\alpha,\beta}}\leq c (M_n+1)^{-m}\Vert \partial^m_x {v}\Vert_{\chi_n^{\alpha+m,\beta+m}} \leq c h_n^m (M_n+1)^{-m}\Vert \partial^m_x {v}\Vert_{\chi_n^{\alpha,\beta}}. \]
\end{lemma}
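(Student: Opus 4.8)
The plan is to reduce the estimate to the reference interval $\Lambda=[-1,1]$ and then combine two classical ingredients: a sharp bound for the $L^2_{\chi^{\alpha,\beta}}$-orthogonal projection onto $\mathcal P_{M_n}$, and a stability argument transferring that bound to the Jacobi--Gauss interpolation operator. First I would apply the affine change of variables $t\mapsto x=(2t-t_{n-1}-t_n)/h_n$, which maps $\Omega_n$ onto $\Lambda$, carries $\chi_n^{\alpha,\beta}$ into $(h_n/2)^{\alpha+\beta}\chi^{\alpha,\beta}$, sends $J_{n,k}^{\alpha,\beta}$ to $J_k^{\alpha,\beta}$ and the shifted Jacobi--Gauss nodes to the standard ones; the powers of $h_n/2$ produced by this rescaling cancel on the two sides of the claimed inequality, so it suffices to prove it on $\Lambda$ with $M_n$, $\chi_n^{\alpha,\beta}$ replaced by $M$, $\chi^{\alpha,\beta}$.

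For the orthogonal projection $\pi_M^{\alpha,\beta}\colon L^2_{\chi^{\alpha,\beta}}(\Lambda)\to\mathcal P_M(\Lambda)$ I would expand $v=\sum_{k\ge0}\hat v_k J_k^{\alpha,\beta}$ and use the iterated differentiation identity $\partial_x^m J_k^{\alpha,\beta}=2^{-m}\,\frac{\Gamma(k+m+\alpha+\beta+1)}{\Gamma(k+\alpha+\beta+1)}\,J_{k-m}^{\alpha+m,\beta+m}$. Matching the Jacobi coefficients of $v$ with those of $\partial_x^m v$ and invoking the orthogonality relation \eqref{orth} (for both weights $\chi^{\alpha,\beta}$ and $\chi^{\alpha+m,\beta+m}$) rewrites $\Vert v-\pi_M^{\alpha,\beta}v\Vert_{\chi^{\alpha,\beta}}^2=\sum_{k>M}|\hat v_k|^2\gamma_k^{\alpha,\beta}$ as $\sum_{k>M} q_k\,|\widehat{\partial_x^m v}_{k-m}|^2\gamma_{k-m}^{\alpha+m,\beta+m}$, where $q_k=\frac{(k-m)!}{k!}\cdot\frac{\Gamma(k+\alpha+\beta+1)}{\Gamma(k+\alpha+\beta+m+1)}$ is an explicit quotient of factorials and Gamma functions. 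Both factors of $q_k$ are decreasing in $k$, so $q_k\le q_{M+1}$, and a short computation identifies $q_{M+1}$ with a constant multiple (depending only on $\alpha,\beta$) of $\Gamma(M+2-m)/\Gamma(M+2+m)$; since $\sum_{k>M}|\widehat{\partial_x^m v}_{k-m}|^2\gamma_{k-m}^{\alpha+m,\beta+m}\le\Vert\partial_x^m v\Vert_{\chi^{\alpha+m,\beta+m}}^2$, this yields the projection estimate.

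To pass from $\pi_M^{\alpha,\beta}$ to $\mathcal I_{x,M}^{\alpha,\beta}$ I would write $v-\mathcal I_{x,M}^{\alpha,\beta}v=(v-\pi_M^{\alpha,\beta}v)-\mathcal I_{x,M}^{\alpha,\beta}(v-\pi_M^{\alpha,\beta}v)$, using $\mathcal I_{x,M}^{\alpha,\beta}\pi_M^{\alpha,\beta}v=\pi_M^{\alpha,\beta}v$, and estimate $\Vert\mathcal I_{x,M}^{\alpha,\beta}w\Vert_{\chi^{\alpha,\beta}}$ via the discrete quadrature norm $\bigl(\sum_j|w(x_j)|^2\omega_j\bigr)^{1/2}$: by \eqref{jab} this discrete norm equals $\Vert\cdot\Vert_{\chi^{\alpha,\beta}}$ on $\mathcal P_M$, and the quadrature error for $w=v-\pi_M^{\alpha,\beta}v$ is controlled by the tail of its own Jacobi expansion --- here one uses that $m\ge1$ forces $v$, hence $w$, to be continuous so that point evaluation makes sense --- so that $\Vert\mathcal I_{x,M}^{\alpha,\beta}(v-\pi_M^{\alpha,\beta}v)\Vert_{\chi^{\alpha,\beta}}$ obeys the same $\Gamma$-ratio bound; adding the two terms proves the first displayed inequality. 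The two ``in particular'' statements then follow from the Gamma asymptotics $\Gamma(N-m)/\Gamma(N+m)\le c\,N^{-2m}$ with $N=M_n+2$ (giving the factor $(M_n+1)^{-m}$), and from the elementary pointwise bound $(t_n-t)(t-t_{n-1})\le h_n^2$ on $\Omega_n$, which yields $\chi_n^{\alpha+m,\beta+m}\le h_n^{2m}\chi_n^{\alpha,\beta}$ and hence $\Vert\partial_x^m v\Vert_{\chi_n^{\alpha+m,\beta+m}}\le h_n^m\Vert\partial_x^m v\Vert_{\chi_n^{\alpha,\beta}}$ after absorbing the remaining numerical constant into $c$. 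I expect the final step --- the stability of $\mathcal I_{x,M}^{\alpha,\beta}$ at the \emph{sharp} rate --- to be the main obstacle, because a crude bound through the Lebesgue constant in the $L^\infty$ norm would destroy the $\Gamma$-quotient; the stability must instead be carried out at the level of Jacobi coefficients, which is precisely where the analysis of \cite{wang2017} is required.
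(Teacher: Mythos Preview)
The paper does not supply a proof of this lemma at all: it is simply quoted from \cite{wang2017}, so there is nothing in the paper to compare your argument against. Your sketch is the standard route used in that reference line (reduce by affine map to $\Lambda$; prove the sharp projection bound via the differentiation identity for Jacobi polynomials and the explicit quotient $q_k$; then transfer to the interpolant through the discrete inner product identity coming from \eqref{jab}), and the two ``in particular'' reductions via the Gamma-ratio asymptotics and the pointwise bound $(t_n-t)(t-t_{n-1})\le h_n^2/4$ are exactly right.

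The only place your outline is genuinely soft is the one you already single out: controlling $\Vert \mathcal I_{x,M}^{\alpha,\beta}(v-\pi_M^{\alpha,\beta}v)\Vert_{\chi^{\alpha,\beta}}$ at the \emph{same} $\Gamma$-ratio rate. The usual fix is not to estimate $\mathcal I_{x,M}^{\alpha,\beta}w$ directly, but to bound $\Vert \pi_M^{\alpha,\beta}v-\mathcal I_{x,M}^{\alpha,\beta}v\Vert_{\chi^{\alpha,\beta}}^2$ by the discrete form $(\pi_M^{\alpha,\beta}v-v,\ \pi_M^{\alpha,\beta}v-\mathcal I_{x,M}^{\alpha,\beta}v)_{M}$ (using that the continuous and discrete inner products agree on $\mathcal P_{2M+1}$ and that $\mathcal I_{x,M}^{\alpha,\beta}v=v$ at the nodes), and then to control this pairing by expanding $\pi_M^{\alpha,\beta}v-v$ in Jacobi modes $k>M$; only the modes $M<k\le 2M+1$ survive in the discrete pairing, and their contribution is dominated by the same tail sum already used for the projection error. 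That is precisely the coefficient-level argument carried out in \cite{wang2017}, so your expectation about where the work lies is accurate.
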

In the current paper, we are interested in the special case  when $\alpha:=\alpha-1$ and $\beta:=0$.
\\
{\bf The shifted Legendre-Gauss interpolation.}
Let $ L_p(t) $ be defined as
\begin{equation*}
	\begin{split}
		L_p(t)=\left\lbrace \begin{array}{lcc}
			l_p(t),&  t\in \Lambda,\\
			0,& \text{o.w},
		\end{array}\right.
	\end{split}
\end{equation*}
where $ l_p(t) $ is the Legendre polynomial of degree $p$. Therefore, the shifted Legendre polynomials of degree $ p $ over the subinterval $ \Omega_n $ are defined as
\[L_{n,p}(t)=L_p(\frac{2t-t_{n-1}-t_n}{h_n}), \quad t\in \Omega_n.\]
 In the previous definition about the  Jacobi interpolation operator and its properties, if we take $ \alpha=\beta=0 $, the shifted Legendre-Gauss interpolation $  \mathcal{I}^t_{M_n} $ can be defined as $\mathcal{I}^{0,0}_{t,M_n}$. For instance,  in Eqs. \eqref{jabc}, \eqref{jab} and \eqref{jac3}, we have 
 \begin{equation}\int_{\Omega_n}L_{n,p}(t)L_{n,q}(t)\mathrm{d}t=\frac{h_n}{2p+1}\delta_{p,q},\end{equation} 
\begin{equation}\label{hn}
\int_{\Omega_n}\phi(t)\mathrm{d}t=\frac{h_n}{2}\sum_{j=0}^{M_n}\phi(t_{n,j})w_{n,j},
\end{equation}
and
\begin{equation}\sum_{j=0}^{M_n}L_{n,p}(t_{n,j})L_{n,q}(t_{n,j})w_{n,j}=\frac{2}{2p+1}\delta_{p,q},\end{equation}
where  $\big \{t_{k,i},w_{k,i}\big\}_{i=0}^{M_k} $ are the shifted Legendre-Gauss quadrature nodes  and weights.
These shifted functions form a complete orthogonal set for $ L^2(\Omega_n)$ functions, i.e., for any function $ g\in L^2(\Omega_n) $, it can be represented as 
\[g(t)=\sum_{p=1}^{\infty}\hat{g}_{n,p}L_{n,p}(t).\]
Therefore,  the operator $  \mathcal{I}_{M_n}^t g(t)$ is stated by
\begin{equation}\label{app}
 \mathcal{I}_{M_n}^t g(t)=\sum_{p=0}^{M_n}\hat{g}_pL_p(t),
\end{equation}
where the coefficients $\hat{g}_{p}$ can be obtained by means of the orthogonality property of  Legendre polynomials as
\[ \hat{g}_{p}=\frac{2p+1}{2} \int_{\Omega_n}g(t)L_{n,p}(t)\mathrm{d}t.\]
  Utilizing the shifted Legendre-Gauss interpolation operator $  \mathcal{I}^t_{M_n} $ to approximate a function belonging to the space $H^{^m}(\Omega_n)$ leads to an error which is bounded as the following lemma shows.
\begin{lemma}(\cite{wang2017})\label{lemf}
For any ${v}\in H^{^m}(\Omega_n)$ with integer $1\leq m \leq M_n+1$, we get
\[\Vert {v}-\mathcal{I}^t_{M_n} {v}\Vert_{\Omega_n}\leq c h_n^m(M_n+1)^{-m}\Vert \partial^m_t {v}\Vert_{\Omega_n}.\]
\end{lemma}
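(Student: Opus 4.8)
Since Lemma \ref{lemf} is precisely Lemma \ref{lem1} specialized to $\alpha=\beta=0$, the plan is to obtain it as a corollary. First I would record the two identifications that make the reduction legitimate: by the convention introduced just before the statement, $\mathcal{I}^t_{M_n}=\mathcal{I}^{0,0}_{t,M_n}$; and the Jacobi weight $\chi_n^{0,0}(t)=(t_n-t)^0(t-t_{n-1})^0\equiv 1$ on $\Omega_n$, so that $\Vert\cdot\Vert_{\chi_n^{0,0}}=\Vert\cdot\Vert_{\Omega_n}$ and $H^m_{\chi_n^{0,0}}(\Omega_n)=H^m(\Omega_n)$ with identical norms. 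Hence the hypothesis $v\in H^m(\Omega_n)$ with integer $1\le m\le M_n+1$ is exactly the hypothesis of Lemma \ref{lem1} for these parameter values.

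Then I would simply invoke the second chain of inequalities in Lemma \ref{lem1} with $\alpha=\beta=0$, namely
\[
\Vert v-\mathcal{I}^t_{M_n}v\Vert_{\Omega_n}
= \Vert v-\mathcal{I}^{0,0}_{t,M_n}v\Vert_{\chi_n^{0,0}}
\le c\,(M_n+1)^{-m}\,\Vert\partial^m_t v\Vert_{\chi_n^{m,m}} .
\]
It then remains only to pass from the weighted seminorm $\Vert\partial^m_t v\Vert_{\chi_n^{m,m}}$ to the plain one $\Vert\partial^m_t v\Vert_{\Omega_n}$. This is the single genuine computation, and it is crude: on $\Omega_n=[t_{n-1},t_n]$ one has $0\le t_n-t\le h_n$ and $0\le t-t_{n-1}\le h_n$, indeed $(t_n-t)(t-t_{n-1})\le (h_n/2)^2$, so $\chi_n^{m,m}(t)=(t_n-t)^m(t-t_{n-1})^m\le (h_n/2)^{2m}$ pointwise, whence $\Vert\partial^m_t v\Vert_{\chi_n^{m,m}}\le (h_n/2)^m\Vert\partial^m_t v\Vert_{\Omega_n}$. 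Absorbing $2^{-m}$ into the generic constant $c$ yields the claimed estimate $\Vert v-\mathcal{I}^t_{M_n}v\Vert_{\Omega_n}\le c\,h_n^m(M_n+1)^{-m}\Vert\partial^m_t v\Vert_{\Omega_n}$.

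For completeness I would also indicate the self-contained route underlying Lemma \ref{lem1}, in case one prefers not to quote it. Affinely map $\Omega_n$ onto $\Lambda=[-1,1]$ via $t(x)=\tfrac12(h_n x+t_{n-1}+t_n)$ and set $\hat v(x)=v(t(x))$; then $\Vert v-\mathcal{I}^t_{M_n}v\Vert^2_{\Omega_n}=\tfrac{h_n}{2}\Vert\hat v-I_{M_n}\hat v\Vert^2_{\Lambda}$, where $I_{M_n}$ is the Legendre-Gauss interpolant on $\Lambda$, and $\partial_x^m\hat v=(h_n/2)^m(\partial_t^m v)\circ t$ gives $\Vert\partial_x^m\hat v\Vert^2_{\Lambda}=2^{1-2m}h_n^{2m-1}\Vert\partial_t^m v\Vert^2_{\Omega_n}$. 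On the reference interval one expands $\hat v$ in Legendre polynomials, truncates at degree $M_n$, and estimates the tail; integrating by parts $m$ times expresses the expansion coefficients of $\hat v$ through those of $\partial_x^m\hat v$, and the $\Gamma$-ratio asymptotics $\Gamma(k+2-m)/\Gamma(k+2+m)\sim k^{-2m}$ deliver $\Vert\hat v-I_{M_n}\hat v\Vert_{\Lambda}\le c(M_n+1)^{-m}\Vert\partial_x^m\hat v\Vert_{\Lambda}$. Substituting back and taking square roots, the accumulated powers of $h_n$ collapse exactly to $h_n^m$.

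The main --- indeed the only --- obstacle is that last reference-interval estimate: the sharp $(M_n+1)^{-m}$ decay of the truncation error rests on the non-elementary bound for Legendre expansion coefficients of $H^m$ functions together with the Gamma-function asymptotics. With Lemma \ref{lem1} already available in the excerpt, however, the present lemma costs only the one-line specialization $\alpha=\beta=0$ and the trivial pointwise weight bound displayed above, so I would present the short argument and merely reference \cite{wang2017} for the underlying machinery.
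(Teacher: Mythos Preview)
Your derivation is correct, but note that the paper does not actually prove Lemma~\ref{lemf}: it is simply quoted from \cite{wang2017}, just as Lemma~\ref{lem1} is. So there is no ``paper's own proof'' to compare against; you have supplied an argument where the paper gives none.

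Your route---specializing Lemma~\ref{lem1} to $\alpha=\beta=0$---is exactly the right one, and in fact you can shorten it further. The second displayed chain in Lemma~\ref{lem1} already records the final inequality
\[
\Vert v-\mathcal{I}^{\alpha,\beta}_{x,M_n}v\Vert_{\chi_n^{\alpha,\beta}}\le c\,h_n^m(M_n+1)^{-m}\Vert\partial_x^m v\Vert_{\chi_n^{\alpha,\beta}},
\]
so the passage from $\Vert\partial_t^m v\Vert_{\chi_n^{m,m}}$ to $h_n^m\Vert\partial_t^m v\Vert_{\Omega_n}$ via the pointwise bound $\chi_n^{m,m}(t)\le(h_n/2)^{2m}$ is precisely what that last step encodes. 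Setting $\alpha=\beta=0$ there gives Lemma~\ref{lemf} in a single line, without re-deriving the weight comparison. Your longer version is still correct and has the virtue of making that hidden step explicit; the sketch of the reference-interval argument is also accurate and appropriately flagged as the non-elementary part.
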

\textbf{The shifted Legendre-Gauss-Lobatto interpolation.} Let $ \big\{x^L_{n,j},w^L_{n,j}\big\}_{j=0}^{M_n} $ be the nodes and Christoffel numbers of the
standard Legendre-Gauss-Lobatto interpolation on $ \Lambda$. The corresponding nodes of this interpolation on $ \Omega_n$ can be defined by
$ s_{n,j}^L=\dfrac{h_nx^L_{n,j}+t_n+t_{n-1}}{2}.$ For the definition of the shifted Legendre-Gauss-Lobatto interpolation $ \mathcal{I}_{s,M_n}^L $, it is easily observed that  $ \mathcal{I}_{s,M_n}^L v\in \mathcal{P}_{M_n}(\Omega_{n})$ and 
$ \mathcal{I}_{s,M_n}^L v(s_{n,j}^L)=v(s_{n,j}^L). $
 For any function $ \phi \in  \mathcal{P}_{{2M_n+1}}(\Lambda) $, the following identities are deduced from the main property of Legendre-Gauss-Lobatto quadrature,
\[\int_{\Omega_n}\phi(s)\mathrm{d}s=\frac{h_n}{2}\sum_{j=0}^{M_n} w^L_{n,j}\phi(s^L_{n,j}),\]
and
\[\sum_{j=0}^{M_n}L_{n,p}(s^L_{n,j})L_{n,q}(s^L_{n,j})w^L_{n,j}=\frac{2}{2p+1}\delta_{p,q}.\]
Due to the presence of the weakly singular term $ (t-s)^{\alpha-1} $ in the main problem \eqref{asli}, the weighted interpolatory quadrature formulae are
 utilized in the approximation procedure. For a function $ \phi \in \mathcal{P}_{{M_k}}(\Omega_k) $, the weighted quadrature formula is interpreted as \cite{wang2017} 
\begin{equation}\label{wtilda}
\int_{\Omega_k}(t-s)^{\alpha-1}\phi(s)\mathrm{d}s=\sum_{j=0}^{M_k} \tilde{w}^L_{k,j}(t)\phi(s^L_{k,j}), \quad t\in \Omega_n, \quad k<n,
\end{equation}
where $ \tilde{w}^L_{k,j}(t)=\int_{\Omega_k}(t-s)^{\alpha-1}l_{k,j}(s)\mathrm{d}s  $ 
and $ \big\{l_{k,j}(s)\big\}_{j=0}^{M_k} $ are Lagrange polynomials associated with the collocation points $ \{s^L_{k,j}\}_{j=0}^{M_k} $.
 The following lemma specifies the error bound of using the shifted Legendre-Gauss-Lobatto interpolation operator $ \mathcal{I}_{s,M_n}^L $ for each function which belongs  $H^{^m}(\Omega_n).$
\begin{lemma}(\cite{wang2017})\label{lemL}
For any ${v}\in H^{^m}(\Omega_n)$ with integer $1\leq m \leq M_n+1$, we get
	\[\Vert {v}-\mathcal{I}^L_{t,M_n} {v}\Vert_{\Omega_n}\leq c h_n^m(M_n+1)^{-m}\Vert \partial^m_t {v}\Vert_{\Omega_n}.\]
\end{lemma}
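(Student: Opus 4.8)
The plan is to transfer the claim to the reference interval $\Lambda=[-1,1]$ by an affine change of variables, apply there the classical Legendre--Gauss--Lobatto interpolation estimate, and scale back to $\Omega_n$. Let $\phi_n:\Lambda\to\Omega_n$ be the affine bijection $\phi_n(\xi)=\tfrac12(h_n\xi+t_{n-1}+t_n)$ and put $\hat v:=v\circ\phi_n$. Since $s^L_{n,j}=\phi_n(x^L_{n,j})$ and $\phi_n$ maps $\mathcal{P}_{M_n}(\Lambda)$ onto $\mathcal{P}_{M_n}(\Omega_n)$, the interpolation operator commutes with the pull-back, i.e. $(\mathcal{I}^L_{t,M_n}v)\circ\phi_n=\mathcal{I}^L_{M_n}\hat v$, where $\mathcal{I}^L_{M_n}$ is the standard Legendre--Gauss--Lobatto interpolant on $\Lambda$. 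Computing the Jacobian of $\phi_n$ gives, for every integer $\ell\ge 0$,
\[
\Vert\partial^\ell_\xi\hat v\Vert_{L^2(\Lambda)}=\bigl(\tfrac{h_n}{2}\bigr)^{\ell-\frac12}\Vert\partial^\ell_t v\Vert_{L^2(\Omega_n)},
\]
and in particular $\Vert v-\mathcal{I}^L_{t,M_n}v\Vert_{\Omega_n}=\sqrt{h_n/2}\,\Vert\hat v-\mathcal{I}^L_{M_n}\hat v\Vert_{L^2(\Lambda)}$. Hence it suffices to establish the reference estimate $\Vert\hat v-\mathcal{I}^L_{M_n}\hat v\Vert_{L^2(\Lambda)}\le c(M_n+1)^{-m}\Vert\partial^m_\xi\hat v\Vert_{L^2(\Lambda)}$: inserting the scaling identities with $\ell=0$ and $\ell=m$ then reproduces the factor $h_n^m(M_n+1)^{-m}$, the remaining power of $2$ being absorbed into $c$.

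For the reference estimate I would decompose $\hat v-\mathcal{I}^L_{M_n}\hat v=(\hat v-\Pi_{M_n}\hat v)+(\Pi_{M_n}\hat v-\mathcal{I}^L_{M_n}\hat v)$, where $\Pi_{M_n}$ is the truncated Legendre expansion ($L^2(\Lambda)$-orthogonal projection onto $\mathcal{P}_{M_n}$). The first summand is controlled by the decay of the Legendre coefficients of an $H^m$ function — the projection analogue of the estimate behind Lemma~\ref{lemf}, i.e. the $\alpha=\beta=0$ instance of the bound in Lemma~\ref{lem1} — giving $\Vert\hat v-\Pi_{M_n}\hat v\Vert_{L^2(\Lambda)}\le c\sqrt{\Gamma(M_n+2-m)/\Gamma(M_n+2+m)}\,\Vert\partial^m_\xi\hat v\Vert_{L^2(\Lambda)}\le c(M_n+1)^{-m}\Vert\partial^m_\xi\hat v\Vert_{L^2(\Lambda)}$. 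The second summand is a polynomial of degree $\le M_n$; writing $\Vert w\Vert_*^2:=\sum_j w(x^L_{n,j})^2 w^L_{n,j}$ for the discrete norm induced by the Lobatto rule and noting that $\mathcal{I}^L_{M_n}\Pi_{M_n}\hat v=\Pi_{M_n}\hat v$, one has $\Pi_{M_n}\hat v-\mathcal{I}^L_{M_n}\hat v=\mathcal{I}^L_{M_n}(\Pi_{M_n}\hat v-\hat v)$, whence the norm equivalence $\Vert p\Vert_{L^2(\Lambda)}\le c\Vert p\Vert_*$ on $\mathcal{P}_{M_n}$ (valid since the Lobatto rule is exact on $\mathcal{P}_{2M_n-1}$) together with $\Vert\mathcal{I}^L_{M_n}w\Vert_*=\Vert w\Vert_*$ yields $\Vert\Pi_{M_n}\hat v-\mathcal{I}^L_{M_n}\hat v\Vert_{L^2(\Lambda)}\le c\Vert\hat v-\Pi_{M_n}\hat v\Vert_*$. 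An aliasing estimate combined with an inverse inequality and the same Legendre coefficient decay then bounds $\Vert\hat v-\Pi_{M_n}\hat v\Vert_*$ by $c(M_n+1)^{-m}\Vert\partial^m_\xi\hat v\Vert_{L^2(\Lambda)}$. Adding the two contributions gives the reference estimate and hence the lemma.

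The change-of-variables bookkeeping is routine. The genuine obstacle is the second step on $\Lambda$: estimating the polynomial $\Pi_{M_n}\hat v-\mathcal{I}^L_{M_n}\hat v$ through the Lobatto discrete inner product. This is delicate precisely because the Legendre--Gauss--Lobatto quadrature is exact only on $\mathcal{P}_{2M_n-1}$ — one degree short of the Gauss rule — so the aliasing term must be handled carefully to avoid losing a factor of $M_n$, and it is here that the sharp order $(M_n+1)^{-m}$ is secured. The complete argument is carried out in \cite{wang2017}.
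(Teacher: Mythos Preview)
The paper does not prove this lemma at all: it is stated as a quotation from \cite{wang2017} and used as a black box. Your sketch is therefore strictly more than what the paper offers, and the route you outline---affine pull-back to $\Lambda$, then split the reference error as $(\hat v-\Pi_{M_n}\hat v)+(\Pi_{M_n}\hat v-\mathcal{I}^L_{M_n}\hat v)$ and control the second piece through the Lobatto discrete inner product---is the standard one used in the source you cite (and in Bernardi--Maday, Canuto--Quarteroni, etc.). The scaling bookkeeping and the identification of the aliasing step as the only nontrivial point are both correct.

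One small remark on the aliasing step: you do not really need an inverse inequality there. The usual argument exploits that the interior Lobatto nodes are the zeros of $L_{M_n}'$, so that on these nodes $L_{M_n+1}$ coincides with $L_{M_n-1}$ up to a known factor; iterating this reduction of high-degree Legendre modes at the quadrature nodes gives the bound on $\Vert\hat v-\Pi_{M_n}\hat v\Vert_*$ directly from the Legendre coefficient decay, without passing through an inverse estimate. This is a cosmetic point only, since you already defer the details to \cite{wang2017}, exactly as the paper does.
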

%------------------------------------------------------------------------------------
\subsection{The $hp$-collocation method for weakly singular integral equations}
\label{des of numerical scheme}
For a fixed integer $ N $, let $ \Omega_h:=\lbrace t_n:~0=t_0<t_1<\dots<t_N=T\rbrace$ be as a mesh on $\Omega$, $ h_n:=t_n-t_{n-1} $ and $h_{\max}=\max\limits_{1\leq n\leq N} h_n  $. Moreover, denote  $ u^{n}(t) $ as the solution of Eq. \eqref{asli} on the $ n$-th subinterval of $ \Omega,$ namely,
\[u^{n}(t)=u(t), \quad t\in \Omega_n:=(t_{n-1},t_n], \quad n=1, 2, \dots, N.\]
By the above mesh, we rewrite the Eq. \eqref{asli} as
\[\int_{0}^{t_{n-1}} (t-s)^{\alpha-1} \kappa(s,t) \psi(s,u(s))\mathrm{d}s+\int_{t_{n-1}}^{t} (t-s)^{\alpha-1} \kappa(s,t) \psi(s,u(s))\mathrm{d}s=f(t),\]
then for any $ t\in \Omega_n $, this equation can be written as
\begin{equation}\label{eq2}
\int_{t_{n-1}}^{t} (t-\tau)^{\alpha-1} \kappa(\tau,t) \psi(\tau,u^n(\tau))\mathrm{d}\tau=f(t)- \sum\limits_{k=1}^{n-1}\int_{\Omega_k} (t-s)^{\alpha-1}\kappa(s ,t) \psi(s ,u^{k}(s))\mathrm{d}s.
\end{equation} 
Now, we transfer the interval $(t_{n-1},t)$ to $ \Omega_n $ by the following linear transform
\begin{equation}\label{tau} \tau=\sigma(\lambda,t):=t_{n-1}+\dfrac{(\lambda-t_{n-1})(t-t_{n-1})}{h_n},\end{equation} 
to get
\begin{equation}\label{eq3}
\begin{aligned}
(\frac{t-t_{n-1}}{h_n})^\alpha\int_{\Omega_n}(t_n-\lambda)^{\alpha-1} \kappa\big(\sigma(\lambda,t),t\big) \psi\big(\sigma(\lambda,t),u^n(\sigma(\lambda,t))\big)\mathrm{d}\lambda
=&f(t)\\~-& \sum\limits_{k=1}^{n-1}\int_{\Omega_k} (t-s)^{\alpha-1}\kappa(s ,t) \psi(s ,u^{k}(s))\mathrm{d}s.
\end{aligned}
\end{equation} 
In the following, we mention some requirements considered in the next section. 
 Let $ \mathcal{I}^{\alpha-1,0}_{\lambda,{M_n}}:C(\Omega_n)\rightarrow \mathcal{P}_{M_n}(\Omega_n)$ be the Jacobi-Gauss interpolation operator. Now, we define a new Legendre-Gauss interpolation operator $ \mathcal{{I}}^{\alpha-1,0}_{\tau,M_n}:C(t_{n-1},t)\rightarrow \mathcal{P}_{M_n}(t_{n-1},t)$ owing to the relation \eqref{tau} with the following property
\[\mathcal{{I}}^{\alpha-1,0}_{\tau,M_n} g(\tau_{n,i})=g(\tau_{n,i}), \quad 0\leq i\leq M_n,\]
where $ \tau_{n,i}:=\tau_{n,i}(x)=\sigma(\lambda_{n,i},t) $ and $ \lambda_{n,i} $ are the $ M_n+1 $ Jacobi-Gauss quadrature nodes in $ \Omega_n $.
Clearly,
\[\mathcal{{I}}^{\alpha-1,0}_{\tau,M_n} g(\tau_{n,i})=g(\sigma(\lambda_{n,i},t))=\mathcal{I}^{\alpha-1,0}_{\lambda,{M_n}}g(\sigma(\lambda_{n,i},t)), \quad 0\leq i\leq M_n,\]
and by Eq. \eqref{jab}, we get
\begin{equation}\label{change}
\begin{split}
\int_{t_{n-1}}^{t}(t-\tau)^{\alpha-1}\mathcal{{I}}^{\alpha-1,0}_{\tau,M_n} g(\tau)\mathrm{d}\tau
&=(\frac{t-t_{n-1}}{h_n})^\alpha \int_{\Omega_n}(t_n-\lambda)^{\alpha-1}\mathcal{{I}}^{\alpha-1,0}_{\lambda,M_n}g(\sigma(\lambda,t))\mathrm{d}\lambda\\
&=(\frac{t-t_{n-1}}{2})^\alpha \sum_{j=0}^{M_n}g(\sigma(\lambda_{n,j},t))w_{n,j}\\
&=(\frac{t-t_{n-1}}{2})^\alpha \sum_{j=0}^{M_n}g(\tau_{n,j})w_{n,j}.
\end{split}
\end{equation}
Meanwhile, it is noticed that 
\begin{equation}\label{change2}
\int_{t_{n-1}}^{t}(t-\tau)^{\alpha-1}\big(\mathcal{{I}}^{\alpha-1,0}_{\tau,M_n} g(\tau)\big)^2\mathrm{d}\tau=(\frac{t-t_{n-1}}{2})^\alpha \sum_{j=0}^{M_n}g^2(\tau_{n,j})w_{n,j}.
\end{equation}
These equations will be valid for the Legendre interpolation operator $ \mathcal{I}_{M_n}^t $, if we take $ \alpha=1 $ and $ t=t_n. $ 
\subsubsection{The $ hp $-version of Jacobi-Gauss collocation method}
In order to seek the solution $ u^n_{M_n}(t)\in \mathcal{P}_{M_n}(\Omega_n)$ of Eq. (\ref{eq3}) by $hp$-collocation method, at the first step this equation is fully discretized as
\begin{equation}\label{eq6}
\begin{array}{ll}
\mathcal{I}^t_{M_n}\Big((\frac{t-t_{n-1}}{h_n})^\alpha\int_{\Omega_n}(t_n-\lambda)^{\alpha-1}\mathcal{I}_{\lambda,M_n}^{\alpha-1,0}\kappa\big(\sigma(\lambda,t),t\big) \psi(\sigma(\lambda,t),u_{M_n}^n(\sigma(\lambda,t)))\mathrm{d}\lambda
\Big)\\=\mathcal{I}^t_{M_n}(f(t))- \mathcal{I}^t_{M_n}\Big(\sum\limits_{k=1}^{n-1}\int_{\Omega_k} (t-s)^{\alpha-1}\mathcal{I}_{s,M_k}^{L}\kappa(s ,t) \psi(s ,u_{M_k}^{k}(s))\mathrm{d}s),\quad t\in \Omega_n  ,
\end{array}
\end{equation}
where
\begin{equation}\label{eq8}
\begin{split}
&\mathcal{I}^t_{M_n}u^n(t)=u^n_{M_n}(t)=\sum_{p=0}^{M_n}\hat{u}^{n}_p L_{n,p}(t), \\
&\mathcal{I}^t_{M_n}\mathcal{I}_{\lambda,M_n}^{\alpha-1,0}\Big((\frac{t-t_{n-1}}{h_n})^\alpha\kappa\big(\sigma(\lambda,t),t\big) \psi(\sigma(\lambda,t),u_{M_n}^n(\sigma(\lambda,t)))
\Big)=\sum_{p,q=0}^{M_n}a^{n}_{pq} L_{n,p}(t)J^{\alpha-1,0}_{n,q}(\lambda),\\
&\sum\limits_{k=1}^{n-1}\mathcal{I}^t_{M_n}\mathcal{I}_{s,M_k}^{L}\Big(\int_{\Omega_k} (t-s)^{\alpha-1}\kappa(s ,t) \psi(s ,u_{M_k}^{k}(s)))\mathrm{d}s=\sum\limits_{k=1}^{n-1}\mathcal{I}^t_{M_n}\Big( \sum\limits_{q=0}^{M_k} \tilde{w}^L_{k,q}(t)\kappa(t_{k,q}^{L},t) \psi(t_{k,q}^{L} ,u_{M_k}^{k}(t_{k,q}^{L})) \Big)\\&\hspace{3.15in}=\sum_{p=0}^{M_n}\sum\limits_{k=1}^{n-1}\sum_{q=0}^{M_k}b^k_{pq} L_{n,p}(t),
\end{split}
\end{equation}
and
\begin{equation}
\mathcal{I}^t_{M_n}f(t)=\sum_{p=0}^{M_n}\hat{f}^{n}_p L_{n,p}(t).
\end{equation}
Then, we get
\begin{equation}\label{eq9}
\begin{aligned}
\int_{\Omega_n}\frac{(t_n-\lambda)^{\alpha-1}}{h_n^\alpha}\mathcal{I}^t_{M_n}\mathcal{I}_{\lambda,M_n}^{\alpha-1,0}\Big((\frac{t-t_{n-1}}{h_n})^\alpha\kappa\big(\sigma(\lambda,t),t\big) \psi(\sigma(\lambda,t),u_{M_n}^n(\sigma(\lambda,t)))
\Big)\mathrm{d}\lambda\\
&\hspace{-0.9in}=\int_{\Omega_n}\frac{(t_n-\lambda)^{\alpha-1}}{h_n^\alpha}\sum_{p,q=0}^{M_n}a^{n}_{pq} L_{n,p}(t)J^{\alpha-1,0}_{n,q}(\lambda)\mathrm{d}\lambda \\&\hspace{-0.9in}=\sum_{p,q=0}^{M_n}a^{n}_{pq} L_{n,p}(t)\int_{\Omega_n}\frac{(t_n-\lambda)^{\alpha-1}}{h_n^\alpha}J^{\alpha-1,0}_{n,q}(\lambda)\mathrm{d}\lambda\\&\hspace{-0.9in}=\sum_{p=0}^{M_n}\hat{a}^{n}_{p0} L_{n,p}(t).
\end{aligned}
\end{equation}
 It is evident from Eqs. \eqref{eq8}-\eqref{eq9} that
\begin{equation}
\begin{aligned}
\hat{u}_p^n=&\frac{2p+1}{2}\sum_{i=0}^{M_n}u^n_{M_n}(t_{n,i})L_{n,p}(t_{n,i})w_{n,i},\\
\hat{a}_{p0}^n=&\frac{2p+1}{2^{1+\alpha}}\sum_{i,j=0}^{M_n}(t_{n,i}-t_{n-1})^{\alpha}\kappa(\sigma(t^{\alpha-1,0}_{n,j},t_{n,i}) ,t_{n,i}) \psi\big(\sigma(t^{\alpha-1,0}_{n,j},t_{n,i}) ,u^n_{M_n}(\sigma(t^{\alpha-1,0}_{n,j},t_{n,i}))\big)\\&L_{n,p}(t_{n,i})w_{n,i}w^{\alpha-1,0}_{n,j},\\
b_{pq}^k=&\frac{2p+1}{2}\sum_{i=0}^{M_n}
\tilde{w}^L_{k,q}(t_{n,i})\kappa(t_{k,q}^{L},t_{n,i}) \psi\big(t_{k,q}^{L} ,u_{M_k}^{k}(t_{k,q}^{L})\big)L_{n,p}(t_{n,i})w_{n,i},\\
\hat{f}_p^n=&\frac{2p+1}{2}\sum_{i=0}^{M_n}f^n_{M_n}(t_{n,i})L_{n,p}(t_{n,i})w_{n,i}.
\end{aligned}
\end{equation}
With Eqs. \eqref{eq8}-\eqref{eq9}, the equation \eqref{eq6} reads
\[\sum_{p=0}^{M_n}\hat{a}^{n}_{p0}L_{n,p}(t)=\sum_{p=0}^{M_n}\hat{f}^{n}_p L_{n,p}(t)+\sum_{p=0}^{M_n}\tilde{b}^{n}_{p} L_{n,p}(t),\]
where
\[\tilde{b}^{n}_p=\sum_{k=1}^{n-1}\sum_{q=0}^{M_k}b^{k}_{pq}.\]
Consequently, we compare the  coefficients to obtain 
\begin{equation}\label{eq7}
\hat{a}^{n}_{p0}=\hat{f}^{n}_p+\tilde{b}^{n}_{p},\quad 0\leq p \leq M_n.
\end{equation}
To evaluate  the unknown coefficients $ u^n_p $ for any given $ n $, we solve the nonlinear system (\ref{eq7}) with the Newton iteration method. Finally, the approximate solution can be obtained as
\begin{equation}\label{unm}
u_{_M}^{N}(t)=\sum_{n=1}^{N}\sum_{p=0}^{M_n}u^n_p L_{n,p}(t).
\end{equation}
%----------------------------
It is worth to notice that for the linear case of Eq. \eqref{asli}, the unknown coefficients  $ \hat{u}^n_p $ for any given $ n $ can be obtained by the following linear system of equations
			\begin{equation}\label{eqr}
			A{\bf u}={\bf b}+{\bf c},
			\end{equation}
			where the entries of the matrix $ A=[a_{p,q}]_{p,q=0}^{M_n}$ are defined by
	\[
			a_{p,q}=\frac{2p+1}{2^{1+\alpha}}\sum_{i,j=0}^{M_n}(t_{n,i}-t_{n-1})^{\alpha}\kappa(\sigma(t^{\alpha-1,0}_{n,j},t_{n,i}) ,t_{n,i}) L_{n,q}\big(\sigma(t^{\alpha-1,0}_{n,j},t_{n,i})\big)L_{n,p}(t_{n,i})w_{n,i}w^{\alpha-1,0}_{n,j},
			\]
			and 
			\[{\bf u}=(\hat{u}^n_0, \dots ,\hat{u}^n_{M_n})^T,\quad {\bf b}=(\tilde{b}^n_0, \dots ,\tilde{b}^n_{M_n})^T,\quad {\bf c}=(\hat{f}^{n}_0, \dots,\hat{f}^{n}_{M_n})^T. \]
%--------------------------------------------
\section{Error analysis}\label{error analysis}
This section is devoted to the analysis of the introduced numerical scheme.  We shall characterize the $hp$-convergence of  the scheme \eqref{eq6} under the hypotheses of  Theorem \ref{thm1} and with respect to  Lemmas \ref{lem1}-\ref{lemL}. To this end, we first recall the  Gr\"{o}nwall inequality.  Throughout this paper, we denote $ \Vert . \Vert_D $ as $ L^2 $-norm on the interval $ D $ and $ M_{\min}=\min\limits_{1\leq n\leq N} M_n.$
%The following Lemmas are conducive to obtain an error bounds for the presented scheme.
\begin{lemma}(\cite{hack})\label{lem2} (Gr\"{o}nwall inequality)
	Assume that there are numbers $ \alpha,~ \beta_l\geq 0~ (l=0, 1, \dots, n-1)$ and $ 0 \leq M_{_0} <1 $ such that 
	\[ 0 \leq \varepsilon_n \leq \alpha+\sum_{l=0}^{n-1} \beta_l\varepsilon_l+M_{_0} \varepsilon_n, \quad n\geq 1.\]
	Then the quantities $ \varepsilon_n $ fulfill the following estimate for $ n\geq 0$
	\[\varepsilon_n\leq \frac{\alpha}{1-M_{_0}}\exp\big(\sum_{l=0}^{n-1}\frac{\beta_l}{1-M_{_0}}\big).\]
\end{lemma}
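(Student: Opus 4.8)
The plan is to prove the estimate by a straightforward induction on $n$, after first absorbing the term $M_{_0}\varepsilon_n$ into the left-hand side. Since $0\le M_{_0}<1$, the hypothesis rearranges to
\[
\varepsilon_n\le \frac{\alpha}{1-M_{_0}}+\sum_{l=0}^{n-1}\frac{\beta_l}{1-M_{_0}}\,\varepsilon_l,\qquad n\ge 1,
\]
and reading the same relation at $n=0$ with the empty-sum convention gives $\varepsilon_0\le \alpha/(1-M_{_0})$. Writing $a:=\alpha/(1-M_{_0})\ge 0$ and $b_l:=\beta_l/(1-M_{_0})\ge 0$, it therefore suffices to show that the recursive bound $\varepsilon_n\le a+\sum_{l=0}^{n-1}b_l\varepsilon_l$ (valid for all $n\ge 0$) implies $\varepsilon_n\le a\prod_{l=0}^{n-1}(1+b_l)$; the claimed inequality then follows at once from the elementary estimate $1+x\le e^{x}$, which gives $\varepsilon_n\le a\exp\big(\sum_{l=0}^{n-1}b_l\big)=\frac{\alpha}{1-M_{_0}}\exp\big(\sum_{l=0}^{n-1}\frac{\beta_l}{1-M_{_0}}\big)$.

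For the induction, the base case $n=0$ is exactly $\varepsilon_0\le a$, already established. Assuming $\varepsilon_l\le a\prod_{j=0}^{l-1}(1+b_j)$ for every $l<n$, substitution into the recursive bound yields
\[
\varepsilon_n\le a+\sum_{l=0}^{n-1}b_l\,a\prod_{j=0}^{l-1}(1+b_j)
= a\Big(1+\sum_{l=0}^{n-1}b_l\prod_{j=0}^{l-1}(1+b_j)\Big).
\]
The key observation is the telescoping identity $b_l\prod_{j=0}^{l-1}(1+b_j)=\prod_{j=0}^{l}(1+b_j)-\prod_{j=0}^{l-1}(1+b_j)$, so that the sum collapses to $\prod_{j=0}^{n-1}(1+b_j)-1$, and hence $\varepsilon_n\le a\prod_{j=0}^{n-1}(1+b_j)$, which closes the induction.

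There is no genuine obstacle here: this is the standard discrete Grönwall lemma, and the only points needing a little care are the correct treatment of the $n=0$ base case (needed because the conclusion is claimed for $n\ge 0$, whereas the hypothesis is written for $n\ge 1$) and the telescoping simplification of the weighted product sum. An equally short alternative is to prove $\varepsilon_n\le a\exp(S_n)$ with $S_n:=\sum_{l=0}^{n-1}b_l$ directly by induction, the step reducing to the verification of $1+\sum_{l=0}^{n-1}b_l e^{S_l}\le e^{S_n}$; this follows by summing the elementary estimate $e^{S_{l+1}}-e^{S_l}=e^{S_l}(e^{b_l}-1)\ge b_l e^{S_l}$ over $l=0,\dots,n-1$. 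I would retain whichever of the two versions reads more cleanly in the final manuscript.
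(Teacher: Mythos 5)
Your proof is correct: the absorption of the $M_{_0}\varepsilon_n$ term using $0\le M_{_0}<1$, the induction giving $\varepsilon_n\le \frac{\alpha}{1-M_{_0}}\prod_{l=0}^{n-1}\bigl(1+\frac{\beta_l}{1-M_{_0}}\bigr)$ via the telescoping identity, and the final passage through $1+x\le e^{x}$ are all sound, and your remark about the $n=0$ case is apt, since as literally stated (hypothesis only for $n\ge 1$) nothing constrains $\varepsilon_0$, so one must indeed read the inequality at $n=0$ with the empty-sum convention for the conclusion to make sense. Note that the paper itself offers no proof to compare against: the lemma is quoted from the cited reference, so your argument — the standard discrete Gr\"onwall induction — simply supplies what the paper leaves to the literature, and either of your two variants would serve.
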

In what follows, some theoretical results regarding the convergence of the method are expressed. We notify that the main results regarding the error analysis of the proposed method are given by Theorem \ref{thm10} and Theorem \ref{thm9}. To provide some rigorous proofs for them, we need to define some auxiliary terms called  $B_1$, $B_2$ and $B_3$ as defined in Eq. \eqref{eq14}. 
	The next theorem provides an upper bound for the term $B_1$ which is the summation of the terms $B_2$ and $B_3$. 
%----------------------------
\begin{theorem}\label{theorem11}
	Let ${u}^n$ be the solution of Eq. \eqref{eq3} under the hypothesis of Theorem \ref{thm1} and $u^n_{M_n}$ be the solution of Eq. \eqref{eq6}. According the assumptions of Theorem \ref{thm1}, the function $\psi(.,u) $ fulfills the Lipschitz condition with respect to the second variable, i.e.,
	\begin{equation}
	\vert \psi(.,u_1)-\psi(.,u_2)\vert\leq \gamma \vert u_1-u_2 \vert,\quad \gamma \geq 0.
	\end{equation}
	Then, for any $1\leq n\leq N$ and $ m\leq M_{\min}+1, $
	\[B_1=B_2+B_3,\] 
	with 
	\begin{equation}
	\begin{aligned}
	\Vert B_1\Vert^2_{\Omega_n}\leq &ch_n T^{2\alpha-1}\sum_{k=1}^{n-1} \Big(h_k^{2m}(M_k+1)^{-2m}\Vert \partial_s^m \psi(s ,u^k(s))\Vert_{\Omega_k}^2+\gamma^2 (\Vert e_k\Vert_{\Omega_k}^2+h_k^{2m-1}(M_k+1)^{-2m}\Vert \partial_t^m u\Vert_{\Omega_k}^2)\Big)\\
	&+ch_n^{2m} (M_n+1)^{-2m}\Vert \partial^m {f}\Vert_{\Omega_n}^2,
\end{aligned}
	\end{equation}
	where
	\begin{equation}\label{eq14}
	\begin{array}{ll}
	B_1=&\mathcal{I}^t_{M_n}\Big((\frac{t-t_{n-1}}{h_n})^\alpha\int_{\Omega_n}(t_n-\lambda)^{\alpha-1}\big(\mathcal{I}_{\lambda,M_n}^{\alpha-1,0}\big(\kappa\big(\sigma(\lambda,t),t\big) \psi(\sigma(\lambda,t),u_{M_n}^n(\sigma(\lambda,t)))\big)
	\\&-\kappa(\sigma(\lambda,t),t)\psi(\sigma(\lambda,t),u^n(\sigma(\lambda,t)))\big)\mathrm{d}\lambda\Big),\\
	B_2&={f}(t)-\mathcal{I}_{M_n}^t{f}(t),\\
	B_3&=\sum\limits_{k=1}^{n-1}\mathcal{I}^t_{M_n}\Big(\int_{\Omega_k} (t-s)^{\alpha-1}\big(\kappa(s ,t) \psi(s ,u^{k}(s))-\mathcal{I}_{s,M_k}^{L}\big(\kappa(s ,t) \psi(s ,u_{M_k}^{k}(s))\big)\big)\mathrm{d}s\Big),
	\end{array}
	\end{equation}
	and $ e_k={u}^{k}-u^{k}_{M_k}$ for $ 1\leq k\leq N$.
\end{theorem}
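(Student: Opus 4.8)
The plan is to prove the statement in two stages: first establish the algebraic identity $B_1=B_2+B_3$, then estimate $\|B_2\|_{\Omega_n}$ and $\|B_3\|_{\Omega_n}$ separately. For the identity, I would note that $u^n$ satisfies the exact equation \eqref{eq3} pointwise on $\Omega_n$ while $u^n_{M_n}$ satisfies the fully discretized equation \eqref{eq6}; applying $\mathcal{I}^t_{M_n}$ to \eqref{eq3} and subtracting \eqref{eq6} from the result, the left-hand sides combine exactly into $B_1$, while on the right-hand sides the interpolation error $f-\mathcal{I}^t_{M_n}f=B_2$ remains together with the accumulated history quadrature error, which, by linearity of $\mathcal{I}^t_{M_n}$, is precisely $B_3$. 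This is only bookkeeping and yields $\|B_1\|^2_{\Omega_n}\le 2\|B_2\|^2_{\Omega_n}+2\|B_3\|^2_{\Omega_n}$, the constant $2$ being absorbed into $c$.

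The term $B_2$ is immediate: by assumption (\textit{\ref{item1}}) of Theorem \ref{thm1} and Lemma \ref{lemf}, $\|B_2\|_{\Omega_n}=\|f-\mathcal{I}^t_{M_n}f\|_{\Omega_n}\le c\,h_n^{m}(M_n+1)^{-m}\|\partial^{m}f\|_{\Omega_n}$, whose square is the last term of the claimed estimate.

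The bulk of the work is the bound for $B_3=\mathcal{I}^t_{M_n}g$, where $g(t)=\sum_{k=1}^{n-1}\int_{\Omega_k}(t-s)^{\alpha-1}v_k(s,t)\,\mathrm{d}s$ and $v_k(s,t)=\kappa(s,t)\psi(s,u^k(s))-\mathcal{I}^L_{s,M_k}\big(\kappa(s,t)\psi(s,u^k_{M_k}(s))\big)$. Since $(\mathcal{I}^t_{M_n}g)^2\in\mathcal{P}_{2M_n}(\Omega_n)$, the $(M_n+1)$-point Legendre-Gauss quadrature \eqref{hn} is exact for it and its weights sum to $2$, so $\|B_3\|^2_{\Omega_n}=\tfrac{h_n}{2}\sum_j w_{n,j}\,g(t_{n,j})^2\le h_n\max_{t\in\Omega_n}|g(t)|^2$. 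For fixed $t$, a weighted Cauchy-Schwarz inequality in the $s$-variable together with $\sum_{k=1}^{n-1}\int_{\Omega_k}(t-s)^{2\alpha-2}\,\mathrm{d}s\le\int_0^t(t-s)^{2\alpha-2}\,\mathrm{d}s\le cT^{2\alpha-1}$ gives $|g(t)|^2\le cT^{2\alpha-1}\sum_{k=1}^{n-1}\|v_k(\cdot,t)\|^2_{\Omega_k}$, which explains the prefactor $h_nT^{2\alpha-1}$. It remains to bound each $\|v_k(\cdot,t)\|_{\Omega_k}$, for which I would use the splitting $v_k=\big(\kappa\psi(\cdot,u^k)-\mathcal{I}^L_{s,M_k}(\kappa\psi(\cdot,u^k))\big)+\mathcal{I}^L_{s,M_k}\big(\kappa\,(\psi(\cdot,u^k)-\psi(\cdot,u^k_{M_k}))\big)$. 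The first summand is the Legendre-Gauss-Lobatto interpolation error of $\kappa\psi(\cdot,u^k)$ and, using $\kappa\in C^m$ and the regularity of $\psi(\cdot,u^k)$ coming from assumptions (\textit{\ref{item2}})-(\textit{\ref{item3}}), Lemma \ref{lemL} controls it by $c\,h_k^{m}(M_k+1)^{-m}\|\partial_s^{m}\psi(s,u^k(s))\|_{\Omega_k}$ (the $\kappa$-derivatives absorbed into $c$). For the second summand I would invoke the Lipschitz hypothesis $|\psi(\cdot,u^k)-\psi(\cdot,u^k_{M_k})|\le\gamma|e_k|$, the Legendre-Gauss-Lobatto quadrature identity, and the invariance $\mathcal{I}^L_{s,M_k}u^k_{M_k}=u^k_{M_k}$ (so that $e_k-\mathcal{I}^L_{s,M_k}e_k=u^k-\mathcal{I}^L_{s,M_k}u^k$ and Lemma \ref{lemL} applies to this remainder), obtaining $c\gamma^2\big(\|e_k\|^2_{\Omega_k}+h_k^{2m-1}(M_k+1)^{-2m}\|\partial_t^{m}u\|^2_{\Omega_k}\big)$. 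Summing over $1\le k\le n-1$, taking the maximum over $t\in\Omega_n$ (finite since $\kappa\in C^m(\Omega\times\Omega)$), and adding the $B_2$ contribution produces the asserted bound for $\|B_1\|^2_{\Omega_n}$.

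The hard part will be the uniform control of the weakly singular weight $(t-s)^{\alpha-1}$ when $t\in\Omega_n$ lies close to $t_{n-1}$, where the kernel is nearly singular across the interface with $\Omega_{n-1}$: the crude bound on $\int_{\Omega_k}(t-s)^{2\alpha-2}\,\mathrm{d}s$ used above is only finite for $\alpha>\tfrac12$, so for small $\alpha$ the contribution of the last subinterval must be isolated and estimated separately, e.g.\ via $\int_{\Omega_{n-1}}(t-s)^{\alpha-1}\,\mathrm{d}s\le h_{n-1}^{\alpha}/\alpha$ together with a supremum-norm interpolation bound. A secondary but essential point is to keep the Lipschitz-generated error expressed through the already-defined quantities $e_k$ (rather than $\|e_k\|_\infty$ or inverse-inequality factors), so that the resulting recursion in $n$ can later be closed by the Gr\"{o}nwall inequality (Lemma \ref{lem2}) in the proof of the final convergence theorems without degrading the $hp$-rates.
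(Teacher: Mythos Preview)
Your proposal is correct and mirrors the paper's argument almost step for step: the identity $B_1=B_2+B_3$ by subtracting \eqref{eq6} from $\mathcal{I}^t_{M_n}$ applied to \eqref{eq3}, the bound on $B_2$ via Lemma~\ref{lemf}, and for $B_3$ the quadrature identity \eqref{hn}, a Cauchy--Schwarz step against $(t-s)^{2\alpha-2}$, and the splitting $v_k=\xi_k+\eta_k$ into an interpolation error (handled by Lemma~\ref{lemL}) and a Lipschitz term (handled via $\mathcal{I}^L_{s,M_k}u^k_{M_k}=u^k_{M_k}$). The only cosmetic difference is that you pass to $\max_{t\in\Omega_n}|g(t)|^2$ after invoking the exactness of the Legendre--Gauss rule, whereas the paper keeps the discrete sum at the nodes $t_{n,j}$ and closes it with $\sum_j w_{n,j}=2$; the resulting bounds are identical.

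Your caveat about $\alpha\le\tfrac12$ is worth noting: the paper performs the same Cauchy--Schwarz step with $\int_0^{t_{n-1}}(t_{n,j}-s)^{2\alpha-2}\,\mathrm{d}s$ and simply writes the result as $c_\alpha T^{2\alpha-1}$ without isolating the last subinterval or otherwise addressing the near-singular behavior as $t_{n,j}\to t_{n-1}$, so your proposal is in this respect more careful than the original proof rather than deficient relative to it.
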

\begin{proof}
	Regarding Eq. \eqref{eq3}, we have
	\begin{equation}\label{eq12}
	\begin{aligned}
	\mathcal{I}_{M_n}^t\Big((\frac{t-t_{n-1}}{h_n})^\alpha\int_{\Omega_n}(t_n-\lambda)^{\alpha-1} \kappa\big(\sigma(\lambda,t),t\big) \psi(\sigma(\lambda,t),u^n(\sigma(\lambda,t)))\mathrm{d}\lambda\Big)
	=\mathcal{I}_{M_n}^t\big(f(t)\big)\\~-\mathcal{I}_{M_n}^t\Big( \sum\limits_{k=1}^{n-1}\int_{\Omega_k} (t-s)^{\alpha-1}\kappa(s ,t) \psi(s ,u^{k}(s))\mathrm{d}s\Big).
	\end{aligned}
	\end{equation} 
	By subtracting  (\ref{eq6}) from the above equation, we have
	\begin{equation}\label{b1}
	B_1(x)=B_2(x)+B_3(x),
	\end{equation}
	where the above terms are defined by \eqref{eq14}.
	In order to obtain an estimation for the term $B_1$, we need error bounds for $ \Vert B_i\Vert,~ i=2,3.$ First using Lemma \ref{lemf}, we infer that
	\begin{equation}
	\Vert B_2\Vert^2_{\Omega_n}= \Vert f(t)-\mathcal{I}_{M_n}^t{f}(t)\Vert_{\Omega_n}^2 
	\leq c h_n^{2m} (M_n+1)^{-2m}\Vert \partial^m {f}\Vert_{\Omega_n}^2.
	\end{equation}
	To seek an upper bound for $  \Vert B_3\Vert$, let us define,
	\[\begin{split}
	\kappa(s, t) \psi(s, u^{k}(s))-\mathcal{I}_{s,M_k}^{L}\big(\kappa(s ,t) \psi(s ,u_{M_k}^{k}(s))\big)=&\kappa(s ,t) \psi(s ,u^{k}(s))-\mathcal{I}_{s,M_k}^{L}\big(\kappa(s ,t) \psi(s, u^{k}(s))\big)\\&+\mathcal{I}_{s,M_k}^{L}\big(\kappa(s ,t) \psi(s ,u^{k}(s))\big)-\mathcal{I}_{s,M_k}^{L}\big(\kappa(s ,t) \psi(s ,u_{M_k}^{k}(s))\big)\\:&=\xi(s,t)+\eta(s,t). 
	\end{split}\]
	Therefore, using \eqref{change2} we get 
	\begin{equation}
	\begin{aligned}
	\Vert B_3\Vert^2&=\big\Vert\mathcal{I}^t_{M_n}\Big( \sum\limits_{k=1}^{n-1}\int_{\Omega_k} (t-s)^{\alpha-1}\big(\kappa(s ,t) \psi(s ,u^{k}(s))-\mathcal{I}_{s,M_k}^{L}\big(\kappa(s ,t) \psi(s ,u_{M_k}^{k}(s))\big)\big)\mathrm{d}s\Big)\big\Vert^2\\
	&=\Vert\mathcal{I}^t_{M_n}\Big( \sum\limits_{k=1}^{n-1}\int_{\Omega_k} (t-s)^{\alpha-1}\big(\xi(s,t)+\eta(s,t)\big)\mathrm{d}s\Big)\Vert^2\\
		&= \int_{\Omega_n}\Big[\mathcal{I}^t_{M_n}\Big( \sum\limits_{k=1}^{n-1}\int_{\Omega_k} (t-s)^{\alpha-1}\big(\xi(s,t)+\eta(s,t)\big)\mathrm{d}s\Big)\Big]^2\mathrm{d}t\\
	&=\frac{h_n}{2} \sum_{j=0}^{M_n} w_{n,j}\Big( \sum\limits_{k=1}^{n-1}\int_{\Omega_k} (t_{n,j}-s)^{\alpha-1}\big(\xi(s,t_{n,j})+\eta(s,t_{n,j})\big)\mathrm{d}s\Big)^2\\
	&\leq \frac{h_n}{2} \sum_{j=0}^{M_n} w_{n,j}\Big( \int_{0}^{t_{n-1}} (t_{n,j}-s)^{2\alpha-2}\mathrm{d}s\Big)\Big(\int_{0}^{t_{n-1}}\big(\xi(s,t_{n,j})+\eta(s,t_{n,j})\big)^2\mathrm{d}s\Big)\\
%	&\leq \frac{h_n}{2} \sum_{j=0}^{M_n} w_{n,j}\Big( \int_{0}^{t_{n-1}} (t_{n,j}-s)^{\alpha-1}\mathrm{d}s\Big)\Big(\int_{0}^{t_{n-1}}(t_{n,j}-s)^{\alpha-1}(\xi+\eta)^2\mathrm{d}s\Big)\\
%	&\leq c_\alpha h_n T^{\alpha}\sum_{j=0}^{M_n} w_{n,j}\Big(\int_{0}^{t_{n-1}}(t_{n,j}-s)^{\alpha-1}(\xi+\eta)^2\mathrm{d}s\Big)
		&\leq c_\alpha h_n T^{2\alpha-1}\sum_{j=0}^{M_n} w_{n,j}\Big(\int_{0}^{t_{n-1}}\big(\xi(s,t_{n,j})+\eta(s,t_{n,j})\big)^2\mathrm{d}s\Big).
	\end{aligned}
	\end{equation}
	By virtue of the fact that $ \sum_{j=0}^{M_n} w_{n,j}=2$, we get
	\begin{equation}\label{b3b}
	\Vert B_3\Vert^2\leq ch_n T^{2\alpha-1}\int_{0}^{t_{n-1}}\big(\xi(s,t_{n,j})+\eta(s,t_{n,j})\big)^2\mathrm{d}s\leq ch_n T^{2\alpha-1} \Vert \xi(s,t_{n,j})+\eta(s,t_{n,j})\Vert_{L^2[0,t_{n-1}]}^2 .
	\end{equation}
	Minkowski inequality yields
	\begin{equation}\label{xiet}
	\Vert \xi+\eta\Vert_{L^2[0,t_{n-1}]}^2=\sum_{k=1}^{n-1} \Vert \xi_k+\eta_k\Vert_{\Omega_k}^2\leq 2\sum_{k=1}^{n-1} (\Vert \xi_k\Vert^2_{\Omega_k}+\Vert\eta_k\Vert_{\Omega_k}^2),
		\end{equation}
	where by Lemma \ref{lemL},
	 \begin{equation}\label{xik}
	 \begin{split}
	 \Vert \xi_k\Vert^2 &\leq  \Vert (\mathcal{I}-\mathcal{I}_{s,M_k}^{L})\big(\kappa(s ,t_{n,j}) \psi(s ,u^{k}(s))\big)\Vert^2\\&\leq ch_k^{2m}(M_k+1)^{-2m} \Vert \partial_s^m\kappa(s ,t_{n,j}) \psi(s ,u^{k}(s))\Vert^2\\&\leq ch_k^{2m}(M_k+1)^{-2m}\Vert \partial_s^m \psi(s ,u(s))\Vert_{\Omega_k}^2,
	  \end{split}
	 \end{equation}
	in which the last inequality follows from the assumption that $ \kappa \in C^m(\Omega\times \Omega). $ Moreover, it is deduced that 
	\begin{equation}\label{etak}
		 \begin{split}
		 \Vert \eta_k\Vert^2 &\leq  \Vert \mathcal{I}_{s,M_k}^{L}\Big(\kappa(s ,t_{n,j}) \big(\psi(s ,u^{k}(s))- \psi(s, u_{M_k}^{k}(s)) \big)\Big)\Vert^2\\&=\frac{h_k}{2}  \sum_{j=0}^{M_k}\Big[ \kappa(s^L_{k,j} ,t_{n,j}) \big(\psi(s^L_{k,j} , u^{k}(s^L_{k,j}))- \psi(s^L_{k,j}, u_{M_k}^{k}(s^L_{k,j}))\big)\Big]^2w_{k,j}
		\\
		&\leq c  \gamma^2 h_k \sum_{j=0}^{M_k} \big(u^{k}(s^L_{k,j})-u_{M_k}^{k}(s^L_{k,j})\big)^2w_{k,j}
	\\
	& \leq c \gamma^2 \int_{\Omega_k} \big(\mathcal{I}^L_{\tau,M_k}\big(u^{k}(\tau)-u_{M_k}^{k}(\tau)\big)^2 \mathrm{d}\tau
		\\
		& \leq c \gamma^2 \int_{\Omega_k} \big(\mathcal{I}^L_{\tau,M_k}u^{k}(\tau)-u^{k}(\tau)\big)^2+\big(u^{k}(\tau)-u_{M_k}^{k}(\tau))\big)^2 \mathrm{d}\tau\\
		&\leq c\gamma^2 (\Vert e_k\Vert^2_{\Omega_k}+h_k^{2m}(M_k+1)^{-2m}\Vert \partial_t^m u\Vert_{\Omega_k}^2).
		 	 \end{split}
		 \end{equation}
	Now, utilizing \eqref{xiet}-\eqref{etak}, the term \eqref{b3b} can be simplified as 
	\begin{equation}
	\begin{split}
	\Vert B_3\Vert_{\Omega_n}^2&\leq ch_n T^{2\alpha-1} \sum_{k=1}^{n-1} \big(\Vert \xi_k\Vert_{\Omega_k}^2+\Vert \eta_k\Vert^2_{\Omega_k}\big)\\& \leq ch_n T^{2\alpha-1}\sum_{k=1}^{n-1} \Big(h_k^{2m}(M_k+1)^{-2m}\Vert \partial_s^m \psi(s ,u(s))\Vert_{\Omega_k}^2+\gamma^2 (\Vert e_k\Vert_{\Omega_k}^2+h_k^{2m}(M_k+1)^{-2m}\Vert \partial_t^m u\Vert_{\Omega_k}^2)\Big),
		\end{split}
	\end{equation}
		so, the desired result is deduced from $ \Vert B_1\Vert^2\leq 2(\Vert B_2\Vert^2+ \Vert B_3\Vert^2).$
\end{proof}
The next lemma investigate an upper bound for the  auxiliary term  $B_0$,  introduced in Eq. \eqref{b0d}. 
\begin{lemma}\label{lemb}
	Under the hypotheses of the previous theorem, the term
	\begin{align}\label{b0d}\begin{split}	B_0(t)=&\mathcal{I}^t_{M_n}\Big((\frac{t-t_{n-1}}{h_n})^\alpha\int_{\Omega_n}(t_n-\lambda)^{\alpha-1}\big(\mathcal{I}_{\lambda,M_n}^{\alpha-1,0}\big(\kappa\big(\sigma(\lambda,t),t\big) \psi(\sigma(\lambda,t),u_{M_n}^n(\sigma(\lambda,t)))\big)\mathrm{d}\lambda\Big)\\
	&-(\frac{t-t_{n-1}}{h_n})^\alpha\int_{\Omega_n}(t_n-\lambda)^{\alpha-1}\kappa(\sigma(\lambda,t),t)\psi(\sigma(\lambda,t),u^n(\sigma(\lambda,t)))\big)\mathrm{d}\lambda,
	\end{split}\end{align}
	has the following error bound
	\begin{equation}\label{b0}\begin{split}	\Vert B_0\Vert^2\leq &ch_n T^{2\alpha-1}\sum_{k=1}^{n-1} \Big(h_k^{2m}(M_k+1)^{-2m}\Vert \partial_s^m \psi(s ,u(s))\Vert_{\Omega_k}^2+\gamma^2 (\Vert e_k\Vert_{\Omega_k}^2+h_k^{2m}(M_k+1)^{-2m}\Vert \partial_t^m u\Vert^2)\Big)\\
	&+ch_n^{2m} (M_n+1)^{-2m}\Vert \partial^m {f}\Vert_{\Omega_n}^2+c_\alpha \,h_n^{2m+\alpha+1}\,(M_n+1)^{-2m} \Vert \psi(.,u(.))\Vert^2_{
{\chi^{\alpha-1,0}}_{(\Omega_n)}	
}.
	\end{split}
	\end{equation}  
\end{lemma}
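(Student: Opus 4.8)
The plan is to compare the definition \eqref{b0d} of $B_0$ with that of $B_1$ in \eqref{eq14}. Both are assembled from the same two blocks --- the fully interpolated local term and the ``exact'' local term $G(t):=\big(\tfrac{t-t_{n-1}}{h_n}\big)^{\alpha}\int_{\Omega_n}(t_n-\lambda)^{\alpha-1}\kappa(\sigma(\lambda,t),t)\psi(\sigma(\lambda,t),u^n(\sigma(\lambda,t)))\,\mathrm{d}\lambda$, which by the change of variable \eqref{tau}--\eqref{eq3} is just $G(t)=\int_{t_{n-1}}^t(t-\tau)^{\alpha-1}\kappa(\tau,t)\psi(\tau,u^n(\tau))\,\mathrm{d}\tau$ --- the only difference being that in $B_0$ the block $G$ stands outside the outer Legendre--Gauss operator $\mathcal{I}^t_{M_n}$, whereas in $B_1$ it is inside. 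Hence $B_0=B_1-(I-\mathcal{I}^t_{M_n})G$, and $\|B_0\|^2_{\Omega_n}\le 2\|B_1\|^2_{\Omega_n}+2\|(I-\mathcal{I}^t_{M_n})G\|^2_{\Omega_n}$. Theorem \ref{theorem11} bounds $\|B_1\|^2_{\Omega_n}$ by (up to obvious absorptions) the first two groups of terms on the right-hand side of \eqref{b0} (the $h_k$-sum and the $\|\partial^m f\|^2_{\Omega_n}$ term), so the whole assertion reduces to producing the last term, $c_\alpha h_n^{2m+\alpha+1}(M_n+1)^{-2m}\|\psi(\cdot,u(\cdot))\|^2_{\chi^{\alpha-1,0}(\Omega_n)}$, as a bound for $\|(I-\mathcal{I}^t_{M_n})G\|^2_{\Omega_n}$.

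The core of the proof is then the estimate of $\|(I-\mathcal{I}^t_{M_n})G\|_{\Omega_n}$. Since the local first-kind Abel integral $G$ behaves like $(t-t_{n-1})^{\alpha}$ near $t_{n-1}$, it is not in $H^m(\Omega_n)$ and Lemma \ref{lemf} does not apply to it directly; instead I would exploit its explicit quadrature structure. Introduce the intermediate function $\widetilde G(t):=\int_{t_{n-1}}^t(t-\tau)^{\alpha-1}\big(\mathcal{I}^{\alpha-1,0}_{\tau,M_n}[\kappa(\cdot,t)\psi(\cdot,u^n(\cdot))]\big)(\tau)\,\mathrm{d}\tau$, which by \eqref{change} equals $\big(\tfrac{t-t_{n-1}}{2}\big)^{\alpha}\sum_{j=0}^{M_n}\kappa(\tau_{n,j},t)\psi(\tau_{n,j},u^n(\tau_{n,j}))\,w_{n,j}$ and is thus of the explicit shape $(t-t_{n-1})^{\alpha}$ times a function of $t$ inheriting the regularity of $\psi(\cdot,u(\cdot))$. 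Split $\|(I-\mathcal{I}^t_{M_n})G\|_{\Omega_n}\le\|(I-\mathcal{I}^t_{M_n})(G-\widetilde G)\|_{\Omega_n}+\|(I-\mathcal{I}^t_{M_n})\widetilde G\|_{\Omega_n}$. For the consistency part, write $G-\widetilde G=\int_{t_{n-1}}^t(t-\tau)^{\alpha-1}(I-\mathcal{I}^{\alpha-1,0}_{\tau,M_n})[\kappa\psi]\,\mathrm{d}\tau$, apply Cauchy--Schwarz with $(t-\tau)^{\alpha-1}$ split symmetrically (the first factor integrating to $(t-t_{n-1})^{\alpha}/\alpha$), pull the remaining weighted integral back onto $\Omega_n$ through \eqref{tau} to recognise $\|(I-\mathcal{I}^{\alpha-1,0}_{\lambda,M_n})g_t\|_{\chi_n^{\alpha-1,0}}$ with $g_t(\lambda)=\kappa(\sigma(\lambda,t),t)\psi(\sigma(\lambda,t),u^n(\sigma(\lambda,t)))$, bound this by Lemma \ref{lem1}, and finally integrate in $t$ over $\Omega_n$ and apply Fubini; combined with $\kappa\in C^m$ and the regularity of $\psi(\cdot,u(\cdot))$ this yields a contribution of the claimed form, the factor $h_n^{2m+\alpha+1}$ and the weight $\chi^{\alpha-1,0}=(t_n-\cdot)^{\alpha-1}$ coming precisely from the scaling in \eqref{tau} and the transformed kernel $(t_n-\lambda)^{\alpha-1}$. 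The term $\|(I-\mathcal{I}^t_{M_n})\widetilde G\|_{\Omega_n}$ is treated by the same device: peel off the factor $(t-t_{n-1})^{\alpha}$ and estimate the interpolation error of the regular factor by the weighted estimate Lemma \ref{lem1} (comparing $\mathcal{I}^t_{M_n}$ with the Jacobi interpolant $\mathcal{I}^{\alpha-1,0}_{t,M_n}$), which again reduces to quantities already present on the right of \eqref{b0}.

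I expect the main obstacle to be exactly this last estimate of $\|(I-\mathcal{I}^t_{M_n})G\|_{\Omega_n}$, and within it the careful tracking of powers of $h_n$: the affine change of variable \eqref{tau} that turns $G$ into an integral over $\Omega_n$ with weight $(t_n-\lambda)^{\alpha-1}$ introduces negative powers of $h_n$ (through the factors $h_n/(t-t_{n-1})$ in the transformed nodes and kernel), and one must verify that these are exactly absorbed by the $h_n^{\alpha}$ from the Cauchy--Schwarz constant and by the $(t_n-\tau)^{\alpha-1}$-weighted $t$-integral after Fubini, so that the net exponent comes out as $2m+\alpha+1$. Once the three pieces $\|B_1\|^2_{\Omega_n}$, $\|(I-\mathcal{I}^t_{M_n})(G-\widetilde G)\|^2_{\Omega_n}$ and $\|(I-\mathcal{I}^t_{M_n})\widetilde G\|^2_{\Omega_n}$ are in hand, \eqref{b0} follows by two applications of $\|a+b\|^2\le 2\|a\|^2+2\|b\|^2$, with the constant depending on $\alpha$, $T$, $\gamma$ and $\|\kappa\|_{C^m(\Omega\times\Omega)}$.
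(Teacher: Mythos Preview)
Your decomposition is exactly the paper's: adding and subtracting $\mathcal{I}^t_{M_n}G$ gives $B_0=B_1+B_4$ with $B_4=(\mathcal{I}^t_{M_n}-I)G$, and $\|B_1\|^2$ is handed off to Theorem~\ref{theorem11}. The divergence is entirely in how $\|B_4\|^2_{\Omega_n}$ is obtained.

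The paper does \emph{not} introduce your intermediate $\widetilde G$ or any further splitting. It writes in one line
\[
\|B_4\|^2_{\Omega_n}\le \|\mathcal{I}^t_{M_n}-\mathcal{I}\|^2\,\|G\|^2_{\Omega_n}\le c\,h_n^{2m}(M_n+1)^{-2m}\|G\|^2_{\Omega_n},
\]
citing ``operator norm and Lemma~\ref{lemf}'', then applies Cauchy--Schwarz with the symmetric splitting of $(t-\tau)^{\alpha-1}$ and Fubini to $\|G\|^2_{\Omega_n}$ to produce $c_\alpha h_n^{2m+\alpha+1}(M_n+1)^{-2m}\|\psi(\cdot,u(\cdot))\|^2_{L^2_{\chi^{\alpha-1,0}}(\Omega_n)}$. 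Your worry that $G\notin H^m(\Omega_n)$ because of the $(t-t_{n-1})^\alpha$ factor is well founded: Lemma~\ref{lemf} is not an operator-norm bound, and the displayed step is not literally justified by it. Your route through $\widetilde G$ and the weighted Jacobi estimate (Lemma~\ref{lem1}) is a genuinely different and more honest attempt to reach the same exponent $2m+\alpha+1$; the paper's route is much shorter but leans on a step you rightly flag as delicate.

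One residual gap in your plan: for the piece $\|(I-\mathcal{I}^t_{M_n})(G-\widetilde G)\|_{\Omega_n}$ you describe how to bound $\|G-\widetilde G\|_{\Omega_n}$ (Cauchy--Schwarz plus Lemma~\ref{lem1}), but that does not by itself control the \emph{interpolation error} of $G-\widetilde G$, which inherits the same $(t-t_{n-1})^\alpha$ singularity as $G$. You will need either a stability bound for $\mathcal{I}^t_{M_n}$ (paying a Lebesgue constant) or to fold this term into the weighted-interpolation argument you use for $\widetilde G$; as written, this step needs the same care you correctly demanded of the paper's shortcut.
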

\begin{proof}
	By adding and subtracting the term \[\mathcal{I}_{M_n}^t\Big((\frac{t-t_{n-1}}{h_n})^\alpha\int_{\Omega_n}(t_n-\lambda)^{\alpha-1} \kappa\big(\sigma(\lambda,t),t\big) \psi(\sigma(\lambda,t),u^n(\sigma(\lambda,t)))\mathrm{d}\lambda\Big),\]
	to the term $ B_0(t),$ we have
		\[	 B_0(t)= B_1(t)+B_4(t),\]
		where $ B_1(t)$ is defined by \eqref{eq14} and 
		\[B_4(t)=(\mathcal{I}_{M_n}^t-\mathcal{I})\Big((\frac{t-t_{n-1}}{h_n})^\alpha\int_{\Omega_n}(t_n-\lambda)^{\alpha-1}\kappa(\sigma(\lambda,t),t)\psi(\sigma(\lambda,t),u^n(\sigma(\lambda,t)))\Big)\mathrm{d}\lambda.\] 
		In order to obtain an estimation for $ B_0(t)$, it suffices to seek an upper bound for $ B_4$. Therefore, using operator norm and Lemma \ref{lemf},
	\begin{equation}\label{b4}
		\begin{split}
		\Vert B_4 \Vert^2_{\Omega_n} &\leq \Vert \mathcal{I}_{M_n}^t -\mathcal{I} \Vert^2 \Vert \int_{t_{n-1}}^{t} (t-\tau)^{\alpha-1}\kappa(\tau,t)\psi(\tau,u^n(\tau)) \mathrm{d}\tau\Vert^2\\
		&\leq c \,h_n^{2m}\,(M_n+1)^{-2m}\int_{\Omega_n}\Big(\int_{t_{n-1}}^{t} (t-\tau)^{\alpha-1}\kappa(\tau,t)\psi(\tau,u^n(\tau)) \mathrm{d}\tau\Big)^2\mathrm{d}t\\
%		&\leq c \,h_n^{2m+1}\,(M_n+1)^{-2m}\int_{\Omega_n}\int_{t_{n-1}}^{t} \Big((t-\tau)^{\alpha-1}\kappa(\tau,t)\psi(\tau,u^n(\tau))\Big)^2 \mathrm{d}\tau\mathrm{d}t\\
		&\leq c \,h_n^{2m}\,(M_n+1)^{-2m}\int_{\Omega_n}\big(\int_{t_{n-1}}^{t} (t-\tau)^{\alpha-1}\mathrm{d}\tau\big)\big(\int_{t_{n-1}}^{t} (t-\tau)^{\alpha-1}\big(\kappa(\tau,t)\psi(\tau,u^n(\tau))\big)^2 \mathrm{d}\tau\big)\mathrm{d}t\\
		& \leq c_\alpha \,h_n^{2m+\alpha+1}\,(M_n+1)^{-2m} \Vert \psi(.,u^n(.))\Vert^2_{L^2_{\chi^{\alpha-1,0}}(\Omega_n)}.
			\end{split}
		\end{equation}
\end{proof} 

%---------------------
 The main theorem concerned with the $hp$-convergence analysis of the scheme \eqref{eq6} for each subinterval $\Omega_n$ is provided as follows:  
\begin{theorem}\label{thm10}\label{te1}
	Assume that the Fr{\'e}chet derivative of the operator $ \mathcal{K}u $ with respect to $ u $ is satisfied at
	%	G(x,u(x)):=\int_{-1}^{x}\frac{h_{n}}{2}\tilde{\kappa}^{n}(x,\tau) \psi(\tau,\tilde{u}^{n}(\tau))d\tau with
	%	$
$\vert (\mathcal{K}^\prime u)(t) \vert \geq l > 0,$ 
	then under the hypothesis of the Theorem \ref{theorem11}, for sufficiently small $ h_{\max} $ the following error estimate is obtained
		\begin{equation}
		\begin{aligned}
		\Vert e_n \Vert^2 = \Vert  {u}^n -{u}^n_{M_n}\Vert^2 \leq&\frac{c_\alpha}{\delta^2}\exp(c \gamma^2 T^{2\alpha})\Big( T^{2\alpha-1}\sum_{k=1}^{n-1} \Big(h_k^{2m}(M_k+1)^{-2m}\Vert \partial_s^m \psi(s ,u(s))\Vert_{\Omega_k}^2\\
		&+\gamma^2 h_k^{2m}(M_k+1)^{-2m}\Vert \partial_t^m u\Vert_{\Omega_k}^2\Big)+h_n^{2m-1} (M_n+1)^{-2m}\big(\Vert \partial^m {f}\Vert_{\Omega_n}^2\\
		&+ h_n^{2\alpha} \Vert \psi(.,u_{M}^N(.))\Vert^2_{H^m_{\chi^{\alpha-1,0}}({\Omega_n})}\big) + h_{n}^{2m+\alpha}(M_n+1)^{-2m}\big(\gamma^2\Vert u\Vert^2_{H^m_{\chi^{\alpha-1,0}}(\Omega_n)}\\
		&+\Vert\psi(.,u(.))\Vert^2_{H^m_{\chi^{\alpha-1,0}}(\Omega_n)}\big)\Big).
		\end{aligned}
		\end{equation}
\end{theorem}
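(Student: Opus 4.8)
The plan is to set up an error equation on the $n$-th subinterval by subtracting the fully discretized scheme \eqref{eq6} from the exact relation \eqref{eq3}, thereby isolating the local error $e_n = u^n - u^n_{M_n}$ against the already-estimated terms $B_0$, $B_1$, $B_2$, $B_3$. Recall that Theorem \ref{theorem11} and Lemma \ref{lemb} give quantitative control of $B_1 = B_2 + B_3$ and of $B_0 = B_1 + B_4$; the missing link is to convert these into a bound on $e_n$ itself. The mechanism for doing that is the lower bound $\vert(\mathcal{K}'u)(t)\vert \geq l > 0$ on the Fr\'echet derivative: since the discretized equation converts the first-kind problem to a second-kind one, the leading (local) term of $B_0$ is, up to a controlled perturbation, $(\frac{t-t_{n-1}}{h_n})^\alpha$ times an integral whose integrand difference is $\kappa\,(\psi(\cdot,u^n_{M_n}) - \psi(\cdot,u^n))$, and the nondegeneracy of $\partial\psi/\partial u$ (assumption \ref{item4} of Theorem \ref{thm1}) together with the Fr\'echet lower bound lets one bound $\|e_n\|_{\Omega_n}$ from below by a constant multiple of this local term minus the interpolation remainders. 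So the first step is to write, schematically, $\delta^2\|e_n\|^2_{\Omega_n} \leq c\big(\|B_0\|^2_{\Omega_n} + \text{(interpolation errors of } \psi, u, f \text{ on } \Omega_n)\big)$, where $\delta$ encapsulates $l$ and $M$.

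Next I would substitute the bound for $\|B_0\|^2$ from Lemma \ref{lemb} (equation \eqref{b0}) into this inequality. The right-hand side then splits into a "history" part — the sum over $k = 1,\dots,n-1$ of terms $h_k^{2m}(M_k+1)^{-2m}\|\partial_s^m\psi\|^2_{\Omega_k}$, $\gamma^2\|e_k\|^2_{\Omega_k}$ and $\gamma^2 h_k^{2m}(M_k+1)^{-2m}\|\partial_t^m u\|^2_{\Omega_k}$, all multiplied by $h_n T^{2\alpha-1}$ — and a "local" part involving $h_n^{2m}(M_n+1)^{-2m}\|\partial^m f\|^2_{\Omega_n}$, the weighted term $h_n^{2m+\alpha+1}(M_n+1)^{-2m}\|\psi(\cdot,u(\cdot))\|^2_{\chi^{\alpha-1,0}(\Omega_n)}$ from $B_4$, and the fresh interpolation errors of $\psi(\cdot,u)$ and $u$ on $\Omega_n$ (which produce the $h_n^{2m+\alpha}(M_n+1)^{-2m}(\gamma^2\|u\|^2_{H^m_{\chi^{\alpha-1,0}}} + \|\psi\|^2_{H^m_{\chi^{\alpha-1,0}}})$ contribution in the claimed estimate). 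The only genuinely coupled quantity on the right is $\sum_{k=1}^{n-1}\gamma^2\|e_k\|^2_{\Omega_k}$ (and an $\|e_n\|^2$ term with a small coefficient, coming from $\eta$-type estimates like \eqref{etak} and from the perturbation of the local leading term), so after absorbing the $\|e_n\|^2$ term on the left — this is where "sufficiently small $h_{\max}$" is used, to make the self-coefficient strictly below $1$ — the inequality has exactly the form required by the Gr\"onwall inequality, Lemma \ref{lem2}, with $\beta_l \sim c\gamma^2 h_n T^{2\alpha-1}$ and $\alpha$ (in the notation of Lemma \ref{lem2}) equal to the collected local terms.

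Applying Lemma \ref{lem2} then yields $\|e_n\|^2 \leq \frac{c}{1-M_0}\exp\big(\sum_{l}\frac{\beta_l}{1-M_0}\big)$ times the local data; since $\sum_{k} h_k = t_n \leq T$, the exponent is bounded by $c\gamma^2 T^{2\alpha}$, which produces the $\exp(c\gamma^2 T^{2\alpha})$ prefactor and the $\frac{c_\alpha}{\delta^2}$ factor in the statement. A bit of care is needed to match powers of $h_n$: the history terms appear with an extra $h_n$ (hence $h_n T^{2\alpha-1}$ rather than $T^{2\alpha}$ — this accounts for the discrepancy between the $h_k^{2m}$ inside the sum in Lemma \ref{lemb} and the way the final bound is displayed), while dividing the local $B_2$ bound by a factor reflecting the $(\frac{t-t_{n-1}}{h_n})^\alpha \sim h_n^\alpha$ weight lowers $h_n^{2m}$ to $h_n^{2m-1}$ in the $\|\partial^m f\|^2$ term and produces the $h_n^{2m-1+2\alpha}$ scaling on the $\|\psi(\cdot,u_M^N(\cdot))\|^2_{H^m_{\chi^{\alpha-1,0}}}$ term. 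I expect the main obstacle to be precisely this step of extracting the lower bound on $\|e_n\|$ from $\|B_0\|$: one must argue that the interpolation operators $\mathcal{I}^t_{M_n}$ and $\mathcal{I}^{\alpha-1,0}_{\lambda,M_n}$ applied to the smooth factor $\kappa$ and to $\psi$ do not destroy the coercivity coming from $\vert(\mathcal{K}'u)\vert \geq l$, i.e. that the discretized Abel operator remains boundedly invertible on $\mathcal{P}_{M_n}(\Omega_n)$ uniformly in $M_n$ and $n$ for small $h_{\max}$ — a perturbation argument that combines the Fr\'echet lower bound, the Lipschitz continuity of $\psi$ (assumption \ref{item5}), and the interpolation error Lemmas \ref{lem1}, \ref{lemf}, \ref{lemL}. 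Everything after that is bookkeeping: collect terms, absorb the small $\|e_n\|^2$, invoke Gr\"onwall.
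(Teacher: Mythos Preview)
Your overall strategy is the paper's: use the Fr\'echet-derivative lower bound to dominate $\|e_n\|$ by an operator difference, decompose that difference into $B_0$ plus local interpolation remainders, substitute the $B_0$ estimate from Lemma~\ref{lemb}, absorb the self-term for small $h_{\max}$, and finish with Gr\"onwall. Two points where the paper is simpler than you anticipate. First, the ``main obstacle'' you flag---showing the \emph{discretized} Abel operator is uniformly invertible on $\mathcal{P}_{M_n}(\Omega_n)$---is avoided entirely: the paper applies a mean-value theorem pointwise to the \emph{continuous} local operator $\mathcal{G}u(t)=\int_{t_{n-1}}^{t}(t-\tau)^{\alpha-1}\kappa(\tau,t)\psi(\tau,u(\tau))\,\mathrm{d}\tau$, obtaining $|e_n(t)|\le \frac{1}{\delta}|\mathcal{G}u^n(t)-\mathcal{G}u^n_{M_n}(t)|$ directly, and only then adds and subtracts to produce $|B_0|+E_1+E_2$, where $E_1,E_2$ (estimated in Appendix~A) measure the gap between $\mathcal{G}u^n_{M_n}$ and its fully discretized version. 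The self-coefficient $c_\alpha h_n^{2\alpha}\gamma^2/\delta^2$ on $\|e_n\|^2$ comes from the $E_1$ bound, not from $B_0$ or from $\eta$-type history terms. Second, the drop from $h_n^{2m}$ to $h_n^{2m-1}$ in the $\|\partial^m f\|^2$ term is not a weight effect: it appears because the paper applies Gr\"onwall to $\varepsilon_k=h_k^{-1}\|e_k\|^2$, so every local term is divided by $h_n$ before the exponential factor is attached.
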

\begin{proof}
	For convenience, let
	\begin{equation}
	F(t,\tau,u(\tau)):= (t-\tau)^{\alpha-1}{\kappa}(\tau,t) \psi(\tau,u(\tau)),\hspace{0.2in}\tau \in (t_{n-1},t].
	\end{equation} and $\mathcal{G}u(t):=\int_{t_{n-1}}^{t}F(t,\tau,u(\tau))) \mathrm{d}\tau.$
	Under the mean value theorem \cite[p. 229]{atkinson}, we have
	\begin{equation}
	\int_{t_{n-1}}^{t}F(t,\tau,{u}^{n}(\tau)) \mathrm{d}\tau  - \int_{t_{n-1}}^{t}F(t,\tau,{u}_{M_n}^{n}(\tau))\mathrm{d}\tau= \mathcal{G}^\prime (\xi)\big({u}^{n}(t) - {u}_{M_n}^{n}(t)\big),
	\end{equation}
	where $ \xi\in (\min\lbrace {u}^{n},{u}_{M_n}^{n}\rbrace,\max\lbrace {u}^{n},{u}_{M_n}^{n}\rbrace ) $ and $ \mathcal{G}^\prime $ denotes the Fr{\'e}chet derivative of $ \mathcal{G} $, namely, 
	\[ \mathcal{G}^\prime (u) h(t)=\int_{t_{n-1}}^{t} (t-\tau)^{\alpha-1}{\kappa}(\tau,t) \frac{\partial \psi(\tau,u(\tau)) }{\partial u} h(\tau) \mathrm{d}\tau. \]
	It is well-known that 
		\[ \mathcal{K}^\prime (u) h(t)=\int_{0}^{t} (t-s)^{\alpha-1}{\kappa}(s,t) \frac{\partial \psi(s,u(s)) }{\partial u} h(s) \mathrm{d}s. \]
	Since  $\vert (\mathcal{K}^\prime u) (t)\vert \gg 0$, then one can deduce that  $\delta:=\vert \mathcal{G}^\prime (u) h(t)\vert \gg 0.$
	Therefore,
	\begin{equation}\label{eq15}
	\Big \vert{u}^{n}(t) - {u}_{M_n}^{n}(t) \Big \vert \leq \frac{1}{\delta} \Big \vert\int_{t_{n-1}}^{t}F(t,\tau,{u}^{n}(\tau) \mathrm{d}\tau  - \int_{t_{n-1}}^{t}F(t,\tau,{u}_{M_n}^{n}(\tau))\mathrm{d}\tau\Big|.
	\end{equation}
	It is evident that 
	\begin{equation}
	\int_{t_{n-1}}^{t}F(t,\tau  ,{u}^{n}(\tau)) \mathrm{d}\tau =(\frac{t-t_{n-1}}{h_n})^\alpha\int_{\Omega_n}(t_n-\lambda)^{\alpha-1}\kappa(\sigma(\lambda,t),t)\psi\big(\sigma(\lambda,t),u^n(\sigma(\lambda,t))\big)\mathrm{d}\lambda,
	\end{equation}
	so from (\ref{eq15}), we infer that
	\begin{equation}\label{en0}
	\begin{aligned}
	\Big \vert e_n(t)\Big \vert=\Big \vert {u}^{n}(t) - {u}_{M_n}^{n}(t) \Big \vert &\leq \frac{1}{\delta} \Big \vert (\frac{t-t_{n-1}}{h_n})^\alpha\int_{\Omega_n}(t_n-\lambda)^{\alpha-1}\kappa(\sigma(\lambda,t),t)\Big[\psi\big(\sigma(\lambda,t),u^n(\sigma(\lambda,t))\big)\\&\,\,\,\, -\psi\big(\sigma(\lambda,t),u_{M_n}^n(\sigma(\lambda,t))\big)\Big]\mathrm{d}\lambda \Big \vert\\&	
	\leq \frac{1}{\delta}\big(\vert B_{0}(t)\vert + E_{1}(t) + E_{2}(t)\big),
	\end{aligned}
	\end{equation}
	where $ B_0(t) $ is defined by Lemma \ref{lemb}  and
	\begin{equation}
	\begin{aligned}
	E_1(t)&=\Big \vert (\mathcal{I}_{M_n}^{x}-\mathcal{I})\Big((\frac{t-t_{n-1}}{h_n})^\alpha\int_{\Omega_n}(t_n-\lambda)^{\alpha-1}\mathcal{I}_{\lambda,M_n}^{\alpha-1,0}\big(\kappa(\sigma(\lambda,t),t)\psi\big(\sigma(\lambda,t),u_{M_n}^n(\sigma(\lambda,t))\big)\mathrm{d}\lambda\Big)\Big\vert, \\
	E_2(t)&=\Big \vert (\frac{t-t_{n-1}}{h_n})^\alpha\int_{\Omega_n}(t_n-\lambda)^{\alpha-1}(\mathcal{I}_{\lambda,M_n}^{\alpha-1,0}-\mathcal{I})\Big(\kappa(\sigma(\lambda,t),t)\psi\big(\sigma(\lambda,t),u_{M_n}^n(\sigma(\lambda,t))\big)\Big)\mathrm{d}\lambda\Big \vert,
	\end{aligned}
	\end{equation}
	thus 
	\begin{equation}\label{en1}	\Vert e_n\Vert_{\Omega_n}^2\leq \frac{3}{\delta^2}( \Vert B_0 \Vert_{\Omega_n}^2+ \Vert E_1 \Vert_{\Omega_n}^2+ \Vert E_2\Vert_{\Omega_n}^2).
	\end{equation}
	%-------------------------------------
The error estimations for the terms $ \Vert E_i\Vert_{\Omega_n},$  with  $i=1,2, $ as calculated in Appendix A  are
	\begin{equation}
	\begin{aligned}
	\Vert E_{1}\Vert^2\leq c_\alpha h_n^{\alpha} \Big(h_n^{\alpha} \gamma^2 \Vert e_n\Vert^2_{\Omega_n} +h_{n}^{2m+1}(M_n+1)^{-2m}\big(\gamma^2\Vert u^n\Vert^2_{H^m_{\chi^{\alpha-1,0}}(\Omega_n)}+\Vert\psi(.,u^n(.))\Vert^2_{H^m_{\chi^{\alpha-1,0}}(\Omega_n)}\big)\Big),
	\end{aligned}
	\end{equation}
	and
	\begin{equation}
	\Vert E_2 \Vert_{\Omega_n}^2 \leq c_\alpha h_n^{2m+2\alpha}(M_n+1)^{-2m} \Vert \psi(.,u_{M_n}^n(.))\Vert^2_{H^m_{\chi^{\alpha-1,0}}(\Omega_n)}.
	\end{equation}
		Hence, the desired result follows  from the relation \eqref{en1} 
							\begin{equation}\label{en}\begin{split}	
			\Vert e_n\Vert_{\Omega_n}^2\leq &\frac{c_\alpha}{\delta^2}\Big(h_n T^{2\alpha-1}\sum_{k=1}^{n-1} \Big(h_k^{2m}(M_k+1)^{-2m}\Vert \partial_s^m \psi(s ,u(s))\Vert_{\Omega_k}^2+\gamma^2 (\Vert e_k\Vert_{\Omega_k}^2+h_k^{2m}(M_k+1)^{-2m}\Vert \partial_t^m u\Vert_{\Omega_k}^2)\Big)\\
				&+h_n^{2m} (M_n+1)^{-2m}\big(\Vert \partial^m {f}\Vert_{\Omega_n}^2+ h_n^{2\alpha} \Vert \psi(.,u_{M}^N(.))\Vert^2_{H^m_{\chi^{\alpha-1,0}}({\Omega_n})}\big)\\
							& +h_n^{\alpha} \Big(h_n^{\alpha} \gamma^2 \Vert e_n\Vert^2_{\Omega_n} +h_{n}^{2m+1}(M_n+1)^{-2m}\big(\gamma^2\Vert u^n\Vert^2_{H^m_{\chi^{\alpha-1,0}}(\Omega_n)}+\Vert\psi(.,u^n(.))\Vert^2_{H^m_{\chi^{\alpha-1,0}}(\Omega_n)}\big)\Big),
					\end{split}
		\end{equation} 
	or equivalently,  
	\begin{equation}\label{en}\begin{split}	
	(1-\frac{c_\alpha\gamma^2h_n^{2\alpha}}{\delta^2})	\Vert e_n\Vert^2\leq &\frac{c_\alpha}{\delta^2}\Big(h_n T^{2\alpha-1}\sum_{k=1}^{n-1} \Big(h_k^{2m}(M_k+1)^{-2m}\Vert \partial_s^m \psi(s ,u(s))\Vert_{\Omega_k}^2+\gamma^2 (\Vert e_k\Vert_{\Omega_k}^2\\
				&+h_k^{2m}(M_k+1)^{-2m}\Vert \partial_t^m u\Vert_{\Omega_k}^2)\Big)+h_n^{2m} (M_n+1)^{-2m}\big(\Vert \partial^m {f}\Vert_{\Omega_n}^2+ h_n^{2\alpha} \Vert \psi(.,u_{M}^N(.))\Vert^2_{H^m_{\chi^{\alpha-1,0}}({\Omega_n})}\big)\\
																& +h_{n}^{2m+\alpha+1}(M_n+1)^{-2m}\big(\gamma^2\Vert u\Vert^2_{H^m_{\chi^{\alpha-1,0}}(\Omega_n)}+\Vert\psi(.,u(.))\Vert^2_{H^m_{\chi^{\alpha-1,0}}(\Omega_n)}\big)\Big).
					\end{split}
				\end{equation}  
Next, assume that the step size  $ h_{\max} $ is sufficiently small such that
	\[ d \gamma^2 h^{2\alpha}_{\max}\leq \beta <1.\]			
Therefore, by means of Gr\"{o}nwall inequality  and taking $ \varepsilon_k=h_k^{-1}\Vert e_k\Vert^2 $, we have
	\begin{equation}
	\begin{aligned}
	\Vert e_n \Vert_{\Omega_n}^2 \leq  &\frac{c_\alpha}{\delta^2}\exp(c \gamma^2 T^{2\alpha})\Big( T^{2\alpha-1}\sum_{k=1}^{n-1} \Big(h_k^{2m}(M_k+1)^{-2m}\Vert \partial_s^m \psi(s ,u(s))\Vert_{\Omega_k}^2+\gamma^2 h_k^{2m}(M_k+1)^{-2m}\Vert \partial_t^m u\Vert_{\Omega_k}^2\Big)\\
	&+h_n^{2m-1} (M_n+1)^{-2m}\big(\Vert \partial^m {f}\Vert_{\Omega_n}^2+
	h_n^{2\alpha} \Vert \psi(.,u_{M}^N(.))\Vert^2_{H^m_{\chi^{\alpha-1,0}}({\Omega_n})}\big)\\
	& + h_{n}^{2m+\alpha}(M_n+1)^{-2m}\big(\gamma^2\Vert u\Vert^2_{H^m_{\chi^{\alpha-1,0}}(\Omega_n)}+\Vert\psi(.,u(.))\Vert^2_{H^m_{\chi^{\alpha-1,0}}(\Omega_n)}\big)\Big),
	\end{aligned}
	\end{equation}
	which infers the desired result.
\end{proof}
%-------------
 The final convergence rate of the $hp$-method at the  whole interval $\Omega$ can be directly obtained by the above theorem. 
\begin{theorem}\label{thm9}
Assume that $ u(t) $ be the exact solution of Eq. (\ref{asli}) and $ u_{_M}^N(t) $ be the global approximate solution obtained from Eq. (\ref{unm}). Under the hypothesis of Theorem \ref{te1}, the following error estimate can be derived for sufficiently small $ h_{\max} $ as
\begin{equation}
\begin{aligned}
\Vert u-u_{_M}^N\Vert_{\Omega}\leq& \frac{c_\alpha}{\delta}\exp(c \gamma^2 T^{2\alpha})h_{\max}^{m}(M_{\min}+1)^{-m}\Big(T^{\alpha}(\gamma\Vert \partial_t^m u\Vert_{\Omega}+\Vert \partial_s^m \psi(s ,u(s))\Vert_{\Omega})+\Vert \partial^m {f}\Vert_{\Omega}\\
&+ \gamma\Vert u\Vert_{H^m_{\chi^{\alpha-1,0}}(\Omega)}+ \Vert\psi(.,u(.))\Vert_{H^m_{\chi^{\alpha-1,0}}(\Omega)}+ h_{\max}^{\alpha}\Vert \psi(.,u_{M}^N(.))\Vert_{H^m_{\chi^{\alpha-1,0}}({\Omega})}\Big).
\end{aligned}
\end{equation}
\end{theorem}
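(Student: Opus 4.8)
The plan is to derive the global estimate directly from the per‑subinterval bound of Theorem \ref{te1} by assembling over the mesh $\Omega_h$. First I would observe that, since $\{\Omega_n\}_{n=1}^N$ partitions $\Omega$ and, by construction, $u_{_M}^N|_{\Omega_n}=u^n_{M_n}$ while $u|_{\Omega_n}=u^n$, the global $L^2$‑error splits as
\[
\Vert u-u_{_M}^N\Vert_\Omega^2=\sum_{n=1}^N\Vert u^n-u^n_{M_n}\Vert_{\Omega_n}^2=\sum_{n=1}^N\Vert e_n\Vert_{\Omega_n}^2 .
\]
Then I would insert the bound for $\Vert e_n\Vert_{\Omega_n}^2$ supplied by Theorem \ref{thm10} into each summand.

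Second, I would carry out the uniform estimates that turn the (still $n$‑dependent) right‑hand side into the claimed closed form. Since $m\ge 1$, the exponents $2m-1$, $2m$, $2m+\alpha$ are all positive, so $h_n^{2m-1}\le h_{\max}^{2m-1}$, $h_n^{2m}\le h_{\max}^{2m}$, $h_n^{2m+\alpha}\le h_{\max}^{2m+\alpha}$ and $(M_n+1)^{-2m}\le(M_{\min}+1)^{-2m}$ for every $n$, so these factors can be pulled out of the sums. For the quantities that depend only on $\Omega_n$ (such as $\Vert\partial^m f\Vert_{\Omega_n}^2$, $\Vert\partial_s^m\psi(s,u(s))\Vert_{\Omega_n}^2$, $\Vert\partial_t^m u\Vert_{\Omega_n}^2$ and the local weighted norms) additivity of the squared $L^2$‑norm over the partition gives $\sum_n\Vert\cdot\Vert_{\Omega_n}^2=\Vert\cdot\Vert_\Omega^2$; for the history term $\sum_{n=1}^N\sum_{k=1}^{n-1}(\cdot)_k$ I would exchange the order of summation, write it as $\sum_{k=1}^{N-1}(N-k)(\cdot)_k$, and combine the already present factor $T^{2\alpha-1}$ with $\sum_n h_n=T$ so that the total power of $T$ becomes $T^{2\alpha}$. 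Collecting, $\Vert u-u_{_M}^N\Vert_\Omega^2$ is bounded by $\tfrac{c_\alpha}{\delta^2}\exp(c\gamma^2T^{2\alpha})\,h_{\max}^{2m}(M_{\min}+1)^{-2m}$ times the sum of the global quantities appearing in the statement; taking square roots and using $\sqrt{a+b+\cdots}\le\sqrt a+\sqrt b+\cdots$ distributes $h_{\max}^{m}(M_{\min}+1)^{-m}$ onto each term and yields the asserted inequality.

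The main obstacle I anticipate is the bookkeeping of the weighted norms: the weights $\chi_n^{\alpha-1,0}(t)=(t_n-t)^{\alpha-1}$ are genuinely local — each is singular at the right endpoint of its own subinterval when $\alpha<1$ — so $\sum_n\Vert g\Vert_{H^m_{\chi_n^{\alpha-1,0}}(\Omega_n)}^2$ is a piecewise‑weighted quantity that does not literally coincide with $\Vert g\Vert_{H^m_{\chi^{\alpha-1,0}}(\Omega)}^2$. I would resolve this by reading the global weighted norms on the right‑hand side as the corresponding piecewise (mesh‑dependent) norms, or equivalently by absorbing the comparison constant between the piecewise‑weighted and the globally weighted norm into $c_\alpha$. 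One also has to check that the smallness hypothesis $d\gamma^2h_{\max}^{2\alpha}\le\beta<1$ inherited from Theorem \ref{te1} is exactly what keeps the prefactor $1-c_\alpha\gamma^2h_n^{2\alpha}/\delta^2$ bounded below by a positive constant uniformly in $n$, which is what legitimizes the Grönwall step underlying Theorem \ref{thm10} and hence its summed form. Beyond these points the argument is routine summation together with monotonicity of $x\mapsto x^{2m-1}$ and $x\mapsto x^{-2m}$.
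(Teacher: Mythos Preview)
Your strategy is the paper's strategy: sum the local estimates of Theorem \ref{te1} over the partition and replace every $h_n$, $M_n$ by $h_{\max}$, $M_{\min}$. The one substantive discrepancy is the assembly formula. The paper does \emph{not} use $\Vert u-u_M^N\Vert_\Omega^2=\sum_n\Vert e_n\Vert_{\Omega_n}^2$; it passes to the reference interval and writes
\[
\Vert u-u_M^N\Vert_\Omega^2=\tfrac12\sum_{n=1}^N h_n\,\Vert e_n\Vert^2,
\]
so that an extra factor $h_n$ sits in front of each local bound. That factor is exactly what makes the history term close up: with it one bounds
\[
\sum_{n=1}^N h_n\,T^{2\alpha-1}\sum_{k=1}^{n-1}(\cdot)_k
\;\le\; T^{2\alpha-1}\Big(\sum_{n=1}^N h_n\Big)\sum_{k=1}^{N}(\cdot)_k
= T^{2\alpha}\sum_{k}(\cdot)_k,
\]
which is precisely the $T^{2\alpha}$ in the statement. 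In your version the outer sum has no $h_n$, so after exchanging the order you get $\sum_k (N-k)(\cdot)_k$; the count $N-k$ is not $\sum_n h_n$, and bounding it by $N$ produces a constant that blows up under mesh refinement. Your sentence ``combine the already present factor $T^{2\alpha-1}$ with $\sum_n h_n=T$'' is therefore the point where the bookkeeping slips: there is no $h_n$ to sum in your decomposition. The remedy is to use the paper's reference-interval assembly (equivalently, to observe that \eqref{en} in the proof of Theorem \ref{te1} already carries the factor $h_n$ in the history term before Gr\"onwall is applied, so the correct local bound to sum is the one with $h_n T^{2\alpha-1}$ rather than $T^{2\alpha-1}$).

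Your remark on the weighted norms is well taken and indeed the paper tacitly reads the global $H^m_{\chi^{\alpha-1,0}}(\Omega)$ norms as the piecewise ones, absorbing any comparison into $c_\alpha$.
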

\begin{proof}
The global convergence error of the approximate solution $ u_{_M}^N(t) $ which is given by 
\[ u_{_M}^N(t)\rvert_{ \Omega_n}={u}^n_{M_n}(x)\Big\rvert_{x=\frac{2t-t_{n-1}-t_n}{h_n}}, \quad 1\leq n\leq N, \]
and the exact solution  $ u(t) $ which is fulfilled in
\[ u(t)\rvert_{\Omega_n}={u}^n(x)\Big\rvert_{x=\frac{2t-t_{n-1}-t_n}{h_n}}, \quad 1\leq n\leq N,\]
can be easily obtained using Theorem \ref{te1}  and the following formula
\[\Vert u-u_{_M}^N\Vert^2_{\Omega}=\frac{1}{2}\sum_{n=1}^{N}h_n\Vert e_n\Vert_{\Omega_n}^2.\]
Therefore,
	\begin{equation}\label{uuu}
	\begin{aligned}
	\Vert   u-u_{_M}^N\Vert^2 \leq&\frac{c_\alpha}{\delta^2}\exp(c \gamma^2 T^{2\alpha})\sum_{n=1}^{N}\Big(h_n T^{2\alpha-1}\sum_{k=1}^{n-1} \Big(h_k^{2m}(M_k+1)^{-2m}\Vert \partial_s^m \psi(s ,u(s))\Vert_{\Omega_k}^2+\gamma^2 h_k^{2m}(M_k+1)^{-2m}\Vert \partial_t^m u\Vert_{\Omega_k}^2\Big)\\
	&+h_n^{2m} (M_n+1)^{-2m}\big(\Vert \partial^m {f}\Vert_{\Omega_n}^2+ h_n^{2\alpha+1} \Vert \psi(.,u_{M}^N(.))\Vert^2_{H^m_{\chi^{\alpha-1,0}}({\Omega_n})}\big)\\
	& +h_{n}^{2m+\alpha+1}(M_n+1)^{-2m}\big(\gamma^2\Vert u\Vert^2_{H^m_{\chi^{\alpha-1,0}}(\Omega_n)}+\Vert\psi(.,u(.))\Vert^2_{H^m_{\chi^{\alpha-1,0}}(\Omega_n)}\big)\Big).
	\end{aligned}
	\end{equation}
All terms of the above error bound can be simplified  using $ h_{\max} $ and $ M_{\min} $ as follows
\[\sum_{n=1}^{N}h_n^{2m} (M_n+1)^{-2m}\Vert \partial^m {f}\Vert^2_{\Omega_n}\leq h_{\max}^{2m} (M_{\min}+1)^{-2m}\Vert \partial^m {f}\Vert^2_{\Omega},\]
similarly,
 \[\sum_{n=1}^{N}h_n^{2m+2\alpha+1} (M_n+1)^{-2m} \Vert \psi(.,u_{M}^N(.))\Vert^2_{H^m_{\chi^{\alpha-1,0}}({\Omega_n})} \leq h_{\max}^{2m+2\alpha+1} (M_{\min}+1)^{-2m}  \Vert \psi(.,u_{M}^N(.))\Vert^2_{H^m_{\chi^{\alpha-1,0}}({\Omega})}.\]
Also the following inequalities can be proved 
\[\sum_{n=1}^{N}h_n^{2m+\alpha+1}(M_n+1)^{-2m} \gamma^2\Vert u\Vert^2_{H^m_{\chi^{\alpha-1,0}}(\Omega_n)}\leq
h_{\max}^{2m+\alpha+1} (M_{\min}+1)^{-2m} \gamma^2\Vert u\Vert^2_{H^m_{\chi^{\alpha-1,0}}(\Omega)},\]
and
\[\sum_{n=1}^{N}h_n^{2m+\alpha+1}(M_n+1)^{-2m} \Vert\psi(.,u(.))\Vert^2_{H^m_{\chi^{\alpha-1,0}}(\Omega_n)}\leq
h_{\max}^{2m+\alpha+1} (M_{\min}+1)^{-2m} \Vert\psi(.,u(.))\Vert^2_{H^m_{\chi^{\alpha-1,0}}(\Omega)}.\]
Furthermore, we can obtain
\[\begin{aligned}\sum_{n=1}^{N}h_n T^{2\alpha-1}\sum_{k=1}^{n-1} \gamma^2 h_k^{2m}(M_k+1)^{-2m}\Vert \partial_t^m u\Vert_{\Omega_k}^2& \leq \gamma^2h_{\max}^{2m}(M_{\min}+1)^{-2m}T^{2\alpha-1}\sum_{n=1}^{N}h_n \sum_{k=1}^{n-1}\Vert \partial_t^m u\Vert_{\Omega_k}^2\\&\leq \gamma^2 h_{\max}^{2m}(M_{\min}+1)^{-2m}T^{2\alpha}\Vert \partial_t^m u\Vert_{\Omega}^2,	\end{aligned}\]
and
\[\begin{aligned}\sum_{n=1}^{N}h_n T^{2\alpha-1}\sum_{k=1}^{n-1} h_k^{2m}(M_k+1)^{-2m}\Vert \partial_s^m \psi(s ,u(s))\Vert_{\Omega_k}^2& \leq h_{\max}^{2m}(M_{\min}+1)^{-2m}T^{2\alpha-1}\sum_{n=1}^{N}h_n \sum_{k=1}^{n-1}\Vert \partial_s^m \psi(s ,u(s))\Vert_{\Omega_k}^2\\&\leq h_{\max}^{2m}(M_{\min}+1)^{-2m}T^{2\alpha} \Vert \partial_s^m \psi(s ,u(s))\Vert_{\Omega}^2.	\end{aligned}.\]
Consequently, the combination of the above error bounds for Eq. (\ref{uuu}) leads to  
\begin{equation}
\begin{aligned}
\Vert u-u_{_M}^N\Vert^2_{\Omega}\leq& \frac{c_\alpha}{\delta^2}\exp(c \gamma^2 T^{2\alpha})h_{\max}^{2m}(M_{\min}+1)^{-2m}\Big(T^{2\alpha}(\Vert \partial_t^m u\Vert_{\Omega}^2+\Vert \partial_s^m \psi(s ,u(s))\Vert_{\Omega}^2)+{\Vert \partial^m {f}\Vert^2_{\Omega}}\\
&+ h_{\max}^{\alpha+1}\big( \gamma^2\Vert u\Vert^2_{H^m_{\chi^{\alpha-1,0}}(\Omega)}+ \Vert\psi(.,u(.))\Vert^2_{H^m_{\chi^{\alpha-1,0}}(\Omega)}+ h_{\max}^{\alpha}\Vert \psi(.,u_{M}^N(.))\Vert^2_{H^m_{\chi^{\alpha-1,0}}({\Omega})}\big)\Big),\\
\end{aligned}
\end{equation}
which completes the proof.
\end{proof}

%--------------------
\section{Numerical results}\label{numerical experiments}
This section illustrates some numerical experiments in order to  scrutinize the efficiency of the $hp$-collocation method for the Abel integral equations. 
The experiments are implemented in $\textsl{Mathematica}^{\circledR}$ software platform and the programs are executed on a PC with 3.50 GHz Intel(R) Core(TM) i5-4690K processor.
In order to analyze the method, the following notations are introduced:
\[E_{1}(u_{M}^{N})=\left(\sum\limits_{k=1}^{N}\sum\limits_{j=0}^{M_k}\frac{h_k}{2}w_{k,j}\big(u^k(x_{k,j})-u_{M_k}^k(x_{k,j})\big)^2 \right)^{\frac{1}{2}},\]
%\[E^{N}_{2}(T)=\max\limits_{1\leq k \leq N}\big\vert u(t_{k})-u_{_M}^{N}(t_k)\big\vert,\]
\[E_{2}(u_{M}^{N})=\max\limits_{t\in \Omega}\big\vert u(t)-u_{_M}^{N}(t))\big\vert.\]
{The discrete $ L^{2}$-norm error is denoted by $E_{1}(u_{M}^{N})$, 
%also the maximum of absolute error at the mesh knots is shown by $ E^{N}_{2}(T) $
 and also $E_{2}(u_{M}^{N}) $ indicates  the infinite norm. Furthermore, the order of convergence $\rho_{N}$ is defined by $\log_{2}\big(\dfrac{E_{1}(u_{M}^{N})}{E_{1}(u_{M}^{2N})}\big) $. The relation of the theoretical order of convergence stated in Theorem \ref{thm9} and  $\rho_{N}$ is derived as 
	\begin{equation}
	\label{ro}
	\rho_N=\log_{2}\big(\dfrac{E_{1}(u_{M}^{N})}{E_{1}(u_{M}^{2N})}\big) \approx \log_{2}\big(\dfrac{ch_{\max}^mM_{\min}^{-m}}{c(\dfrac{h_{\max}}{2})^mM_{\min}^{-m}}\big)=\log_2 2^m=m.
	\end{equation}
This criterion  can be utilized to check the order  of convergence in practice based on the continuous injection between $L^2(\Omega)$ and $L^{\infty}(\Omega)$ \cite{brezis}.

	Let $L$ denote the number of unknown coefficients; in this way we have $L=\sum\limits_{n=1}^N(M_n+1)$ for the $hp$-collocation method and in a specific case, if all degrees of polynomials $  M_n $ are equal, i.e. $ M_n=M^*$, for $ n=1, \dots, N,$ then according to relation \eqref{unm}, $ L=(M^*+1)\times N. $ For convenience, we denote $ M:=M^*+1,$  so $ L=M\times N. $}

The nonlinear systems which arise in the formulation of the method are solved by utilizing  the Newton iteration method which needs an initial guess.
In these examples, all the initial points are chosen by an algorithm based on the steepest descent method.

\begin{remark}\label{rem3}
		In \cite{nedai, patterson}, two adaptive schemes based on mesh refinement are introduced. A bound for the error is chosen arbitrary and then 
		the implementation proceeds  by increasing the degree of polynomials or refinement of the mesh size until the desired error bound is observed.  The described scheme is called ``adaptive $hp$-collocation method". 
		%If we provide some facilities to  modify the degree of polynomials in each sub-interval or change the mesh-size during the approximation procedure then the desired error could be fulfilled. 
\end{remark}
\subsubsection*{Singular solution}
\begin{example}\label{ex111}
Consider a test problem with singular solution
	\begin{equation*}
	\int_{0}^{t} (t-s)^{\alpha-1}\exp(ts) u^2(s)\mathrm{d}s=(\frac{1}{t})^{-2\alpha} t^{2+\alpha}
	\Gamma(3+2\alpha)\Gamma(\alpha)~ _1F_1(3+2\alpha,
	3+3\alpha, t^2), \quad t\in [0,1],
	\end{equation*}
where the function $_1F_1$ is called confluent hypergeometric function of the first kind. The exact solution $ u(t)=t^{1+\alpha}$  belongs to $ H^2_{\alpha-1,0}([0,1])$.  
In this example, different merits of the $hp$- method are investigated. First and foremost, the superiority of the $hp$- version method against $h$- and $p$-version method with $\alpha=0.3$ is demonstrated by Figure \ref{f1} and \ref{f11}. The $hp$-version method allows us to adjust the parameters $M$ and $N$ to achieve the suitable solution.  Figure \ref{f1} depicts $h$- and $p$-version methods in which the values of parameters $M$ and $N$ are equal to $1$, respectively. Figure \ref{f11} shows the $hp$-version collocation method for  each fixed $ N=1,2,4,8 $ when $ h_n=h=\frac{1}{N}$   and various values of $ M_n=M^* $ for $ n=1, \dots, N $.
% Comparison of Figures \ref{f1} and \ref{f11} shows the superiority of  $ hp$-version against  $ h$- and $p$-versions.
 
Secondly, in order to compare the theoretical and the numerical solution, we consider $hp$-version with $M=2$ and various $N$. Therefore, it is expected to have a rate near $2$ by means of relation \eqref{ro}; namely, $\rho_N \approx m=2$. This expectation is experimentally verified and shown in the left sub-figure of Figure \ref{f0001}. 
 
 %	For different $M$, $N$ and $ \alpha $,  
 Finally, we consider different values for $\alpha$. According to Theorem \ref{thm9},  increasing the  values of $\alpha $  affirmatively affects on the convergence rate which is verified by the numerical results on the right sub-figure of Figure \ref{f0001}.
\end{example}
%----------------------------------------------------------------------
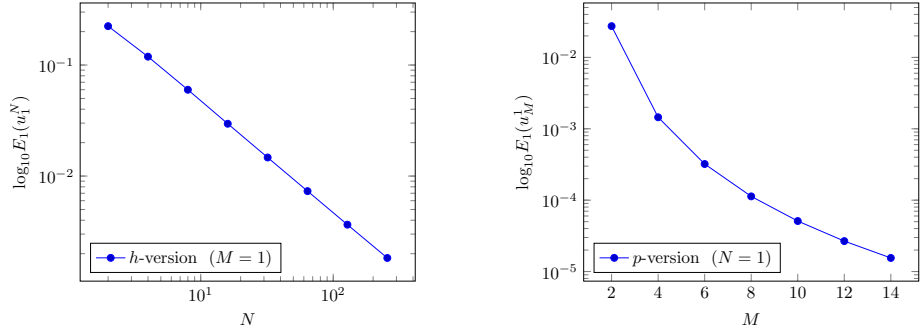
\begin{figure}
	\begin{center}
	\begin{tikzpicture}
	[scale=0.65, transform shape]
	\begin{axis}[
	xlabel={$ N $},ylabel={${\log}_{10}E_{1}(u_{1}^{N})$},xmode=log, ymode=log, legend entries={$ h\text{-version}~~(M=1)$}, legend pos=south west]
	\addplot coordinates {
		%N=1
		(2,2.24E-01)
		(4,1.19E-01)
		(8,5.99E-02)
		(16,2.96E-02)
		(32,1.47E-02)
		(64,7.32E-03)
    	(128,3.65E-03)
%	(180,1.50E-13)
		(256,1.83E-3)
	};
	%	\addplot coordinates {
			%N=1
	%		(2,5.00E-02)
%			(4,1.08E-02)
%			(8,3.50E-03)
%			(16,1.06E-03)
%			(32,3.01E-04)
%			(64,8.06E-05)
%	    	(128,2.14E-05)
	%	(180,1.50E-13)
	%		(256,3.052E-6)
%		};
		\end{axis}
	\end{tikzpicture}
	%Program 1ex-pact2-copy.nb
	\hspace{1cm}
\begin{tikzpicture}
[scale=0.65, transform shape]
\begin{axis}[
xlabel={$ M $},ylabel={$ {\log}_{10}E_{1}(u_{M}^{1}) $}, ymode=log, legend entries={$p\text{-version}~~(N=1)$}, legend pos=south west]
\addplot coordinates {
	%$0.3$
	(2,2.74E-2)
	(4, 1.45E-3)
	(6,3.21E-4)
	(8,1.13E-4)
	(10, 5.10E-5)
	(12, 2.67E-5)
	(14, 1.55E-5)
};
%\addplot coordinates {
%	%$0.5$
%(2,2.06E-2)
%	(4, 1.73E-4)
%	(6,1.65E-5)
%	(8,4.77E-6)
%	(10, 1.92E-6)
%	(12, 9.47E-7)
%};
%\addplot coordinates {
%	%$0.7$
%(2,3.25E-2)
%	(4,1.45E-4)
%	(6,8.61E-6)
%	(8,1.96E-6)
%	(10, 7.02E-7)
%	(12, 5.16E-7)
%};
\end{axis}
\end{tikzpicture}
\caption{Plots of the $ E_{1}(u_{M}^{N}) $ error in logarithmic scale for the   $ h$- and $p$-version collocation methods with $\alpha=0.3$ for Example \ref{ex111}.}%
\label{f1}
\end{center}
\end{figure}

%-------------------------------------------------------
\begin{figure}
		\begin{center}
	\begin{tikzpicture}
	[scale=0.65, transform shape]
	\begin{axis}[
	xlabel={$M$},ylabel={${\log}_{10}E_{1}(u_{M}^{N})$}, ymode=log, legend entries={$N=1$, $N=2$, $N=3$, $N=4$}, legend pos=south west]
	\addplot coordinates {
		%N=1
		%(1,4.07E-04)
		(2,2.75E-02)
		(3,4.49E-03)
		(4,1.45E-03)
		(5,6.27E-04)
		(6, 3.21E-04)
		(7,1.83E-04)
		(8,1.3E-04)
		(9,7.43E-05)
		(10,5.10E-05)
	};
	\addplot coordinates {
		%N=2
		%(1,1.00E-02)
		(2,1.10E-02)
		(3,9.08E-04)
		(4,1.68E-04)
		(5, 5.57E-05)
		(6,2.68E-05)
		(7,1.51E-05)
		(8,9.35E-06)
		(9,6.13E-06)
		(10,1.55E-06)
	};
	\addplot coordinates {
		%N=3
		%(1,1.74E-04)
		(2,5.35E-03)
		(3,3.04E-04)
		(4,4.35E-05)
		(5, 1.31E-05)
		(6,6.24E-06)
		(7,3.52E-06)
		(8,2.17E-06)
		(9,1.42E-06)
		(10,1.02E-6)
	};\addplot coordinates {
		%N=4
		%	(1,4.44E-03)
		(2,3.17E-03)
		(3,1.37E-04)
		(4,1.64E-05)
		(5, 4.67E-06)
		(6,2.21E-06)
		(7,1.25E-06)
		(8,7.21E-07)
		(9,5.24E-07)
		(10,3.84E-07)
	};
	\end{axis}
	\end{tikzpicture}
	\caption{Plots of the $ E_{1}(u_{M}^{N}) $ error in logarithmic scale for the $ hp$-version collocation method with $\alpha=0.3$ for Example \ref{ex111}.}%
	\label{f11}
			\end{center}
\end{figure}
\begin{figure}
	\begin{center}
	\begin{tikzpicture}
	[scale=0.65, transform shape]
	\begin{axis}[
	xlabel={Plot a)$~~~ N,~ \alpha=0.7 $},ylabel={${\log}_{10}E_{1}(u_{2}^{N})$},xmode=log, ymode=log, legend entries={$ M_n=M^*=1$, $\text{Theoretical results}$}, legend pos=south west]
	\addplot coordinates {
		%N=1
		(2,2.15E-02)
		(4,7.03E-03)
		(8,2.14E-03)
		(16,6.10E-04)
		(32,1.66E-04)
		(64,4.4E-05)
    	(128,1.10E-05)
%	(180,1.50E-13)
		(256,2.66E-6)
	};
		\addplot coordinates {
			%N=1
			(2,5.00E-02)
%			(4,1.08E-02)
%			(8,3.50E-03)
%			(16,1.06E-03)
%			(32,3.01E-04)
%			(64,8.06E-05)
%	    	(128,2.14E-05)
	%	(180,1.50E-13)
			(256,3.052E-6)
		};
		\end{axis}
	\end{tikzpicture}
	%Program 1ex-pact2-copy.nb
	\hspace{1cm}
\begin{tikzpicture}
[scale=0.65, transform shape]
\begin{axis}[
xlabel={Plot b)$ ~~~ M $},ylabel={$ N=2 $}, ymode=log, legend entries={$\alpha=0.3$,$\alpha=0.5 $,$\alpha=0.7 $}, legend pos=south west]
\addplot coordinates {
	%$0.3$
	(2,1.10E-2)
	(4, 1.68E-4)
	(6,2.68E-5)
	(8,9.35E-6)
	(10, 4.12E-6)
	(12, 2.43E-6)
};
\addplot coordinates {
	%$0.5$
(2,2.06E-2)
	(4, 1.73E-4)
	(6,1.65E-5)
	(8,4.77E-6)
	(10, 1.92E-6)
	(12, 9.47E-7)
};
\addplot coordinates {
	%$0.7$
(2,3.25E-2)
	(4,1.45E-4)
	(6,8.61E-6)
	(8,1.96E-6)
	(10, 7.02E-7)
	(12, 5.16E-7)
};
\end{axis}
\end{tikzpicture}
\caption{Plots of the $ E_{1}(u_{M}^{N}) $ error in logarithmic scale: a) Comparison between theoretical and numerical results b) The results of $hp$-method for Example \ref{ex111} for various $\alpha$.}%
\label{f0001}
\end{center}
\end{figure}
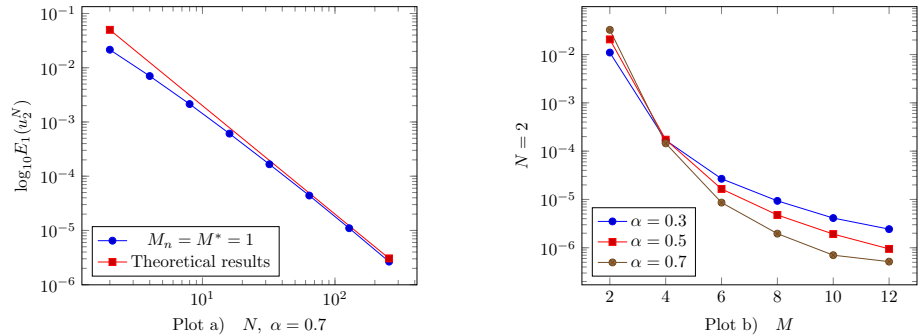
%--------------------------------------------------------------------
%Plato
%Program: plato+linear+purturb.nb
\begin{example} \label{explato}
(\cite{plato12})	In the following example, we consider solving the linear Abel equation 
	\begin{equation*}
	\int_{0}^t (t-s)^{-0.5}\exp(-t+s)u(s)\mathrm{d}s=f(t), \quad t\in [0,T],
	\end{equation*}
	where $f(t)=\exp(-t)(t^4+t^6)$ and the exact solution is $u(t)= \exp(-t)\big(\frac{4!}{\Gamma(4.5)}t^{3.5}+\frac{6!}{\Gamma(6.5)}t^{5.5}\big). $

	For the sake of good comparison, let us take into account some assumptions considered in the interesting paper \cite{plato12}. Numerical experiments with $M=2$ and step sizes $h=1/2^q, q=5,6, \dots, 11$ accompanied with the noise level $\delta=h^{2.5}$ for investigating the effect of perturbation are employed.	The perturbation is added to the right hand side as $f^\delta(t)=f(t)+\delta$. Let us denote $E_2^\delta(u_{2}^{N}):=\max_{t\in \Omega}\vert u_N^\delta(t)-u(t)\vert$. Table 1 demonstrates the superiority of the $hp$-collocation method against trapezoidal method. The last column shows that the order of perturbation symbol $\delta$ is not even linear. This importance verifies the well-posedness of the scheme.
	 
	On the other hand, the purpose of adaptive $hp$-collocation method is to utilize subspaces with lower dimension which yield less computational complexity and CPU time. For instance, if we take $M=12$ and $N=2$, then the absolute error is $2.6 \textrm{e}{-07}$ for $L=24$ whereas the best results of  Table 1 are achieved by $L=2\times 2048$.  
%-------------------------------------------------	
	\begin{table}[ht]\label{table1}
			\begin{center}
				\begin{small}
					\caption{The comparison of trapezoidal method \cite{plato12} and $hp$-collocation method  for different $ N $ with fixed $ M=M_n+1=2$ and $h=1/2^q, q=5,6,\dots,11$ in the sense of $E_N^\delta$ for Example \ref{explato}.}\vspace*{0.1in}
					\begin{tabular}{c|c|cc|cc|c|c}
						\hline
						\noalign{\smallskip}
						$N$&$\delta $ && $E_N^\delta$ 
						(\cite{plato12})
						 && $E_N^\delta$ ($ hp$) &$E_N^\delta$(\cite{plato12})/$\delta^{0.8}$&$E_N^\delta$($hp$)$/\delta^{0.85}$
						\\\noalign{\smallskip}
						\hline
						\noalign{\smallskip}\hline\noalign{\smallskip}
						$32$&$1.7\textrm{e}-04$&&$2.95\textrm{e}-03$&&$ 7.29\textrm{e}-04$&$3.02 $& $ 1.17$\\						$64$&$3.1\textrm{e}-05$&&$1.04\textrm{e}-03$&&$1.83\textrm{e}-04$&$4.26$& $ 1.24$\\
						$128$&$5.4\textrm{e}-06$&&$2.03\textrm{e}-04$ &&$ 4.57\textrm{e}-05 $&$3.32 $& $ 1.37$\\
						$256$ &$9.5\textrm{e}-07$&&$5.79\textrm{e}-05$&&$1.14\textrm{e}-05 $&$3.79 $& $ 1.50$\\
						$512$&$1.7\textrm{e}-07$&&$1.72\textrm{e}-05$&&$2.84\textrm{e}-06 $&$4.50 $&$ 1.61$\\
						$1024$&$3.0\textrm{e}-08$&&$4.24\textrm{e}-06$ &&$ 7.09\textrm{e}-07 $&$4.45 $&$1.75  $\\
						$2048$ &$5.3\textrm{e}-09$&&$1.09\textrm{e}-06$&&$1.99\textrm{e}-07$&$4.58 $&$2.15  $\\
					%	$9$&$1028$&&$7.9\textrm{e}-08$&&$ 2^9$&$1028 $&&$ 7.19\textrm{e}-11 $\\
						\hline
			$\rho_N$&&&$1.50$&&$ 1.99 $&&\\
						\noalign{\smallskip}\hline
					\end{tabular}
				\end{small}
			\end{center}
			\label{tab:1}
	\end{table}
	\end{example}
%----------------------------------------------------------------
\subsubsection*{Smooth solution}
%Program: branca+compare1.nb
\begin{example}(\cite{branca1978})\label{ex101}
	In this example, we apply the method to the following nonlinear weakly singular Volterra integral equation of the first kind
	\begin{equation*}
	\int_{0}^t (t-s)^{-0.5} \big(1+s+tu(s)\big)u(s)\mathrm{d}s=\frac{32}{45045}(1287+1144+960t^4)t^{3.5}, \quad t\in [0,1],
	\end{equation*}
	with the exact solution $ u(t)=t^3$. 
	Table 2 reports the comparison of  Finite Difference Method (FDM) of the third order \cite{branca1978} and $ hp$-collocation method with the same value of $ L $. The present scheme runs for  various values of $ N$ with fixed step size $ h_n=h=\frac{1}{N} $, uniform mode $ M=M_n+1=3$ for $ n=1, \dots ,N$. As expected from (\ref{ro}), $ \rho_N $ is approximately equal to $ m\leq M_{\min}+1=3 $. 
		
	 Adaptivity and capability of the present scheme to obtain the best result are provided according to Remark \ref{rem3}. We take our desired absolute error equal to $10^{-14}$ and hence achieve the appropriate solution  with the absolute error $2.33e-15$,  when $M$ and $N$ are chosen $4$ and $1$, respectively. As we expected, $p$-version works well for the problems with smooth solution.
		%Program: branca+compare1(1).nb
\end{example} 
%-----------------------------------------------------
\begin{table}
		\begin{center}
			\begin{small}
				\caption{The comparison of FDM  \cite{branca1978} and $hp$-collocation method  for different $ N $ with fixed $ M=M_n+1=3$ and $h=h_n=\frac{1}{N}$ for $ n=1, \dots ,N$ in terms of $E_{2}(u_{3}^{N})$ for Example \ref{ex101}.}\vspace*{0.1in}
				\begin{tabular}{ccccccc}
					\hline
					\noalign{\smallskip}
					$N$&& FDM \cite{branca1978} && $ hp-$collocation
					\\\noalign{\smallskip}
					\hline
					\noalign{\smallskip}\hline\noalign{\smallskip}
					$10$&&$3.7\textrm{e}-04$&& $ 9.42\textrm{e}-05$\\
					$20$&&$4.7\textrm{e}-05$&& $ 1.08\textrm{e}-05$\\
					$40$&&$6.0\textrm{e}-06$ && $ 1.12\textrm{e}-06$\\
					$80$ &&$7.6\textrm{e}-07$&& $ 1.18\textrm{e}-07$\\
					$160$&&$9.6\textrm{e}-08$&&$ 1.21\textrm{e}-08$\\
					$320$&&$1.2\textrm{e}-08$ &&$1.32\textrm{e}-09  $\\
				%	$8$ &&$3.1\textrm{e}-07$&&$ 2^8 $&&$512 $&&$1.15\textrm{e}-09  $\\
%					$9$&$1028$&&$7.9\textrm{e}-08$&&$ 2^9$&$1028 $&&$ 7.19\textrm{e}-11 $\\
					\hline
					$\rho_N$&&$2.97$&&$ 3.12 $\\
					\noalign{\smallskip}\hline
				\end{tabular}
			\end{small}
		\end{center}
		\label{tab:2}
\end{table}
%--------------------------------------------------------------------------
\begin{example} 
	(\cite{Liu}) Consider the following linear Abel equation
\begin{equation*}
\int_{0}^t (t-s)^{-0.25}(t^2s^3+s^4+1)u(s)\mathrm{d}s= \dfrac{128 t^{\frac{11}{4}} (3933+256t^4(8+9t))}{908523}, \quad t\in [0,1],
\end{equation*}		
with smooth solution $ u(t)=t^2. $ 
This example is studied in \cite{Liu} using mechanical quadrature method and its extrapolation by converting the above first kind integral equation to the second kind.
 The lowest error in terms of absolute error utilizing $ h^2$-extrapolation and $ N=80 $ is reported  $ 1.72\textrm{e}-8$ in \cite{Liu}. As described in the previous example, we expect  $ p$-version to work well for such a smooth case. The appropriate approximate solution using adaptive $ hp$-collocation for the given tolerance  $ 10^{-15}$ is  achieved by $ M=4$ and $ N=1$; namely $E_{2}(u_{4}^{1}) = 3.33\textrm{e}-16$. The superiority of the proposed method in the sense of computational complexity and accuracy is evident.
\end{example}
\subsubsection*{Discontinuous solution.}
%----program name: discontinuous+nonlinear+ok+u5.nb-----
\begin{example} \label{ex5}
	In the following example, we consider solving the nonlinear Abel equation
	\begin{equation*}
	\int_{0}^t (t-s)^{-0.2}\kappa(t,s)u^5(s)\mathrm{d}s=f(t), \quad t\in [0,1],
	\end{equation*}
	where $ \kappa(t,s)=\sin(t-s) $ and $f(t)$ is chosen such that 
	\begin{equation*}
	\begin{split}
	u(t)=\left\lbrace \begin{array}{cc}
\exp(-t),&0\leq t<0.5,\\
2-t^2,&   0.5 \leq t\leq 1,
	\end{array}\right.
	\end{split}
	\end{equation*}
be the exact solution. 
Note that in this example, $ \kappa(t,t)=0$ which demonstrates that the problem can not be converted into the second kind Abel equation. Secondly, the exact solution $ u(x) $ is a discontinuous function which  obviously can not be solved by $ p$-version methods.
Figure \ref{f7} shows  considerable results  for various $ M $ with fixed step size $h_n=h=\frac{1}{2} $ and $M_n=M^*$ for $n=1,2$. 
Here, we take the degree of polynomials for each  sub-interval $\Omega_n$	with $M_n=M^*$ for all $n =1,2$. Now, we implement the program in order to reach the best result of the scheme automatically. 	The best finding belongs to different degree mode $M_1=9$ and $M_2=4$ for the first and second sub-intervals. Using only $15$ basis functions leads to achieve an appropriate solution with the $L^2$-error norm $3.02\textrm{e}-8$. The best reported error in Figure \ref{f7} was for $M_1=M_2=9$ with the value $2.12\textrm{e}-7$. A comparison between these two results show the adaptivity of the scheme in which a lower dimension of basis functions prevent the scheme from aggregating more errors.
\end{example}
\begin{figure}
	\begin{center}
	\begin{tikzpicture}
	[scale=0.65, transform shape]
	\begin{axis}[
	xlabel={$ M $},ylabel={$ E_1^{N}(u_{M}^{2}) $}, legend entries={$N=2$}, ymode=log]
	\addplot coordinates {
		%$2$
		(2,3.67E-2)
		(4, 4.97E-4)
		(6,2.16E-5)
		(8,3.34E-6)
		(10, 2.12E-7)
	};
	\end{axis}
	\end{tikzpicture}
	\caption{Plots of the $ E_1^{N}(u_{M}^{2}) $ error in logarithmic scale for  different mode $M$ with fixed $ h_n=h=\frac{1}{2}$ and $M_n=M^*$ for $n=1,2$ for Example \ref{ex5}.}%
	\label{f7}%
	\end{center}
\end{figure}
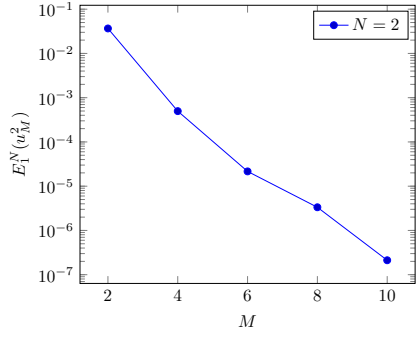
%-----------------------------------------------------------
\subsubsection*{Unknown exact solution}
	In the following example, we consider equation  which has a unique solution according Theorem \ref{thm1}.
	\begin{example}\label{exunknown2}
		The following nonlinear weakly singular integral equation is considered
		\begin{equation*}
		\int_{0}^{t}  (t-s)^{-0.4}\kappa(t,s)\Big(\cos(2s  u(s))-\ln u^3(s)-\sqrt{s u(s)}\Big)\mathrm{d}s=t^{1.5}-\vert t\vert,\quad t\in [0,1.5],
		\end{equation*}
		where 
			\begin{equation*}
			\begin{split}
			\kappa(t,s)=\left\lbrace \begin{array}{cc}
		t^2-s+5,& 0<s<0.5,\quad 0<t<1.5,\\
		\exp(st)+\frac{4}{s+1}-2,& 0.5<s<1,\quad 0<t<1.5,\\
			\frac{t}{s},& 1<s<1.5,\quad 0<t<1.5.
			\end{array}\right.
			\end{split}
			\end{equation*}
	%---------------------
	%Program:test7.nb
	%We know that the introduced equation has a unique solution, but the exact solution is not known. Therefore,
	The exact solution in unknown, therefore we choose $ u^3_{12}(t) $ with $ L=12\times 3=36 $ basis functions as a benchmark for comparison.   Figure 6
		 %  \ref{pic7} }
		  shows the benchmark which is the approximate solution for $ T=1.5 $.
	% {\color{purple}
	 Figure \ref{f4}
	%\ref{f4}} 
	depicts the convergence of the scheme to benchmark solution by increasing $ M$ and $ N $ with $M=M_n+1=M^*+1  $ and fixed step size $h_n=h=\frac{1}{N} $ for $ n=1, \dots, N $.  
	 It is conspicuous that $hp$-method works well for approximating these non-smooth solution against $p$-version. 
	 %Moreover, $N=2$ is also a bad choice for this parameter to approximate the solution.  
	 % {\color{purple} . 
%	  {\color{red} The superiority of the scheme to show discontinuity of the solution against the $p$-version can be observed in Figures 6 and 7 by comparing the approximate solutions $u^{10}_1$ and $u_{12}^{3}$.}
	\end{example}
	\begin{figure}
		\begin{center}
		\begin{tikzpicture}
		[scale=0.65, transform shape]
		\begin{axis}[%title=Error of  Ex. \ref{ex2},
		xlabel={$ M $},ylabel={${\log}_{10}E_{2}(u^N_{M}(t))$}, ymode=log, legend entries={$ N=1 $,$N=2$,$N=3$,$N=6$}, legend pos=south west]
		\addplot coordinates {
			%$N=1}$
			(2,5.76E-02)
			(4,3.38E-02)
			(6, 2.24E-02)
			(8, 9.42E-03)
			(10,9.21E-03)
		%	(12,5.69E-06)
		};
		\addplot coordinates {
			%$N=2}$
			(2,6.37E-02)
			(4,2.92E-02)
			(6, 8.30E-03)
			(8, 8.27E-03)
			(10,8.29E-03)
		%	(12,5.24E-09)
		};
		\addplot coordinates {
			%$N=3$
			(2,5.30E-02)
			(4,5.91E-03)
			(6, 1.71E-03)
			(8, 9.72E-04)
			(10,2.62E-04)
		%	(12,5.46E-12)
		};
			\addplot coordinates {
				%$N=6$
				(2,2.05E-02)
				(4,1.95E-03)
				(6, 8.82E-04)
				(8, 2.54E-04)
				(10,8.88E-05)
			%	(12,5.46E-12)
			};
		\end{axis}
		\end{tikzpicture}
		\caption{ Plots of the $E_{2}(u^N_{M}(t))$ in logarithmic scale for Example \ref{exunknown2}. }%
		\label{f4}%
		\end{center}
	\end{figure}
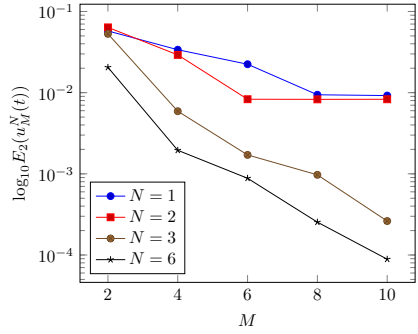
	\begin{figure}\label{pic7}
		\begin{center}			\resizebox*{7cm}{!}{\includegraphics{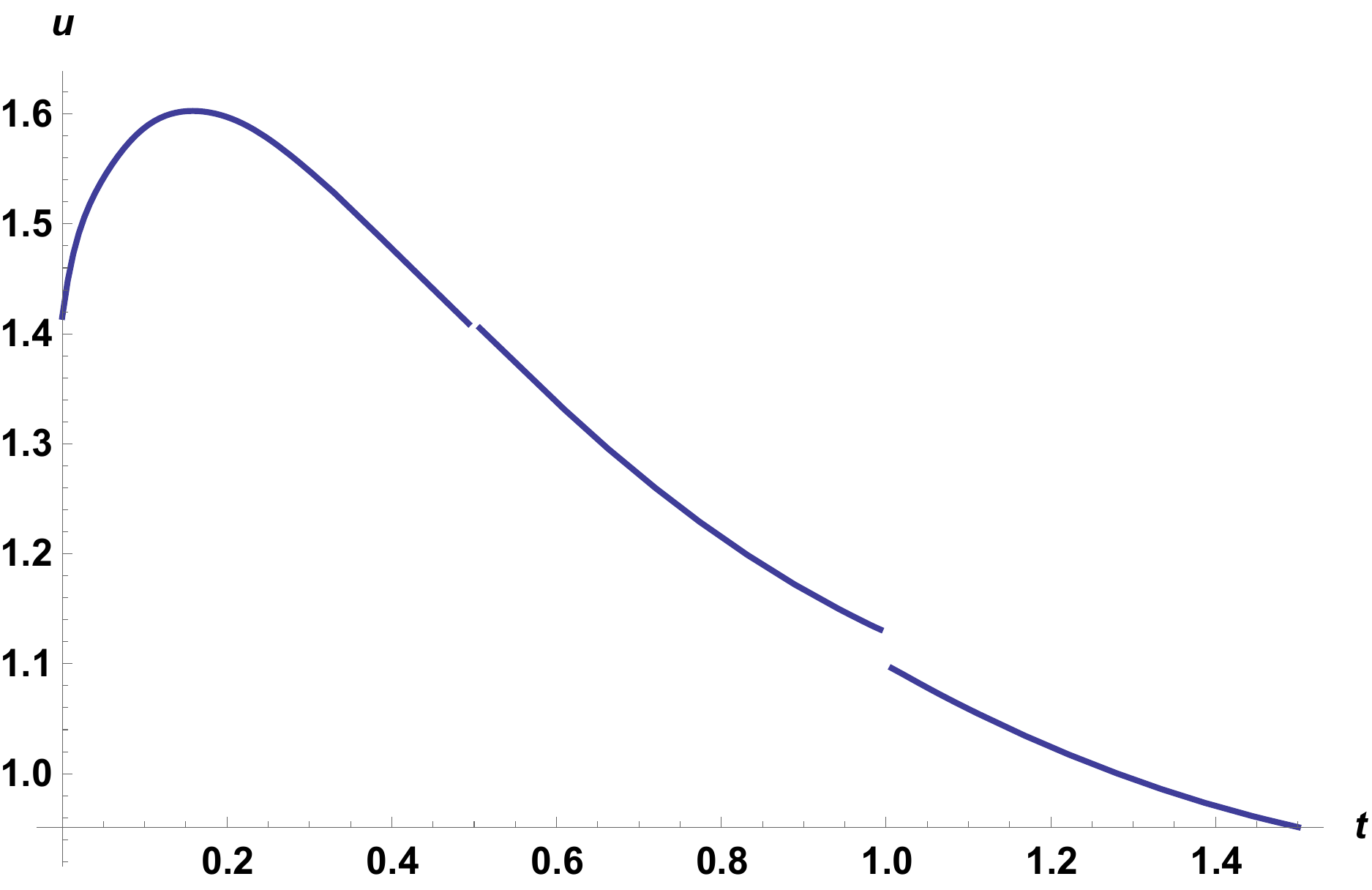}}
			\caption{The approximate solution $ u^3_{12}(t) $ for $ T=1.5$. }
		\end{center}
	\end{figure}
%		\begin{figure}\label{pic8}
%			\begin{center}			\resizebox*{6cm}{!}{\includegraphics{u101.pdf}}
%				\caption{The approximate solution $ u^1_{10}(t) $ for $ T=1.5$. }
%			\end{center}
%		\end{figure}
%------------------------------------------------
\section*{Conclusion\label{SecConcl}}
The first kind integral equations and their approximations are interesting problems from both theory and application view  points.  This paper concerns a nonlinear class of them so-called Abel integral equations in a general form. The existence and uniqueness of the solution have been investigated in the suitable Sobolev spaces under some  assumptions. The  $hp$-version Jacobi projection methods have been studied and a prior error analysis in $L^2$-norm is developed for Abel integral equations.  Numerical results indicate that the proposed scheme is effective and powerful to deal with smooth and non-smooth solutions. This analysis can be extended for the nonlinear fractional differential operators of Riemann-Liouville and Caputo types which are being considered and investigated in our future works. 

%------------------------------------------------------
\section*{Appendix A.}\label{appin}
In this section, we are acquiring error bounds for the terms $ \Vert E_1\Vert_{\Omega_n}$ and $ \Vert E_2\Vert_{\Omega_n}$.
In advance,
\begin{equation}
\begin{aligned}
\Vert E_2 \Vert^2_{\Omega_n}&=\int_{\Omega_n}(\frac{t-t_{n-1}}{h_n})^{2\alpha}\Big(\int_{\Omega_n}(t_n-\lambda)^{\alpha-1}(\mathcal{I}_{\lambda,M_n}^{\alpha-1,0}-\mathcal{I})\big(\kappa(\sigma(\lambda,t),t)\psi(\sigma(\lambda,t),u_{M_n}^n(\sigma(\lambda,t)))\big)\mathrm{d}\lambda\Big)^2\mathrm{d}t\\& \leq \Vert (\dfrac{t-t_{n-1}}{h_n})^{2\alpha}\Vert_{\infty}\Big\vert\int_{\Omega_n}\Big(\int_{\Omega_n}(t_n-\lambda)^{\alpha-1}(\mathcal{I}_{\lambda,M_n}^{\alpha-1,0}-\mathcal{I})\big(\kappa(\sigma(\lambda,t),t)\psi(\sigma(\lambda,t),u_{M_n}^n(\sigma(\lambda,t)))\big)\mathrm{d}\lambda\Big)^2\mathrm{d}t\Big\vert\\
%	& \leq ch_n\big\vert\int_{\Omega_n}\int_{\Omega_n}\Big((t_n-\lambda)^{\alpha-1}(\mathcal{I}_{\lambda,M_n}^{\alpha-1,0}-\mathcal{I})\big(\kappa(\sigma(\lambda,t),t)\psi(\sigma(\lambda,t),u_{M_n}^n(\sigma(\lambda,t)))\big)\Big)^2\mathrm{d}\lambda\mathrm{d}t\Big\vert\\
&\leq c\Big(\int_{\Omega_n} (t_n-\lambda)^{\alpha-1} \mathrm{d}\lambda\Big)\int_{\Omega_n}\int_{\Omega_n}(t_n-\lambda)^{\alpha-1}\Big\vert (\mathcal{I}_{\lambda,M_n}^{\alpha-1,0}-\mathcal{I})\Big(\kappa(\sigma(\lambda,t),t)\psi(\sigma(\lambda,t),u_{M_n}^n(\sigma(\lambda,t)))\Big)\Big\vert^2\mathrm{d}\lambda\mathrm{d}t,\\&\leq c_\alpha h_n^{\alpha}\int_{\Omega_n}\int_{\Omega_n}(t_n-\lambda)^{\alpha-1}\Big\vert (\mathcal{I}_{\lambda,M_n}^{\alpha-1,0}-\mathcal{I})\Big(\kappa(\sigma(\lambda,t),t)\psi(\sigma(\lambda,t),u_{M_n}^n(\sigma(\lambda,t)))\Big)\Big\vert^2\mathrm{d}\lambda\mathrm{d}t,
\end{aligned}
\end{equation}
where the above inequalities follow directly from the Cauchy-Schwartz inequality. Now using Lemma \ref{lem1} and the relation \eqref{tau}, we get 
\begin{equation}
\begin{aligned}
\Vert E_2 \Vert_{\Omega_n}^2&\leq c_\alpha h_n^{2m+\alpha}(M_n+1)^{-2m}\int_{\Omega_n}\int_{\Omega_n}(t_n-\lambda)^{\alpha-1}
\Big\vert\partial_\lambda^{m}\Big(\kappa(\sigma(\lambda,t),t)\psi\big(\sigma(\lambda,t),u_{M_n}^n(\sigma(\lambda,t))\big)\Big)\Big\vert^2\mathrm{d}\lambda\mathrm{d}t\\
&\leq c_\alpha h_n^{2m+\alpha}(M_n+1)^{-2m}\int_{\Omega_n}\int_{t_{n-1}}^t  (h_n \,.\frac{t-\tau}{t-t_{n-1}})^{\alpha-1}(\frac{t-t_{n-1}}{h_n})^{2m-1}\Big\vert\partial_\tau^{m}\Big(\kappa(\tau,t)\psi(\tau,u_{M_n}^n(\tau))\Big)\Big\vert^2\mathrm{d}\tau\mathrm{d}t\\ \notag
%&\leq ch_n^{2m+2\alpha-1}(M_n+1)^{-2m}\int_{\Omega_n}\int_{t_{n-1}}^t (\frac{t-t_{n-1}}{h_n})^{m-1}\Big\vert\partial_\tau^{m}\Big(\kappa(\tau,t)\psi(\tau,u_{M}^N(\tau))\Big)\Big\vert^2\mathrm{d}\tau\mathrm{d}t\\
&\leq
c_\alpha h_n^{2m+2\alpha}(M_n+1)^{-2m}\sum\limits_{i=0}^{m}\int_{\Omega_n} \Big\vert\partial_\tau^{m}\Big(\psi(\tau,u_{M_n}^n(\tau))\Big)\Big\vert^2\mathrm{d}\tau\\\notag
&=c_\alpha h_n^{2m+2\alpha}(M_n+1)^{-2m} \Vert \psi(.,u_{M_n}^n(.))\Vert^2_{H^m_{\chi^{\alpha-1,0}}(\Omega_n)}.
\end{aligned}
\end{equation}
In order to find an upper bound for the term $ E_1$, we notice that 

\begin{equation}\begin{split}\label{e0}
\Vert E_1\Vert_{\Omega_n}^2=&\Big \Vert  (\mathcal{I}_{M_n}^{x}-\mathcal{I})\Big(\int_{t_{n-1}}^{t}(t-\tau)^{\alpha-1}\mathcal{I}_{\tau,M_n}^{\alpha-1,0}\big(\kappa(\tau,t)\psi\big(\tau,u_{M_n}^n(\tau)\big)\mathrm{d}\tau \Big)\Big\Vert^2_{\Omega_n}\\\notag
=&\Big \Vert \int_{t_{n-1}}^{t}(t-\tau)^{\alpha-1}\mathcal{I}_{\tau,M_n}^{\alpha-1,0}\big(\kappa(\tau,t)\big[\psi\big(\tau,u_{M_n}^n(\tau)\big)-\psi\big(\tau,u^n(\tau)\big)\big]\big)\mathrm{d}\tau  \\\notag&-\mathcal{I}_{M_n}^{t}\Big(\int_{t_{n-1}}^{t}(t-\tau)^{\alpha-1}\mathcal{I}_{\tau,M_n}^{\alpha-1,0}\big(\kappa(\tau,t)\big[\psi\big(\tau,u_{M_n}^n(\tau)\big)-\psi\big(\tau,u^n(\tau)\big)\big]\mathrm{d}\tau \Big)
\\\notag&+(\mathcal{I}-\mathcal{I}_{M_n}^t)\Big(\int_{t_{n-1}}^t (t-\tau)^{\alpha-1}(\mathcal{I}_{\tau,M_n}^{\alpha-1,0}-\mathcal{I})\big(\kappa(\tau,t)\psi(\tau,u^n(\tau))\mathrm{d}\tau\Big)
\\\notag&
+(\mathcal{I}-\mathcal{I}_{M_n}^t)\big(\int_{t_{n-1}}^t (t-\tau)^{\alpha-1}\kappa(\tau,t)\psi(\tau,u^n(\tau))\mathrm{d}\tau\big)\Big\Vert^2
\\ \notag\leq &(E_{1,1}+E_{1,2}+E_{1,3}+\Vert B_4\Vert^2),
\end{split}
\end{equation}
where the above terms $ E_{1,i}, i=1, 2, 3 $ will introduce in sequel with their relevant upper bounds and the error bound for $ \Vert B_4\Vert^2 $ is derived in \eqref{b4}. Using Eqs. \eqref{change}, \eqref{change2} and Cauchy-Schwartz inequality, we get 
\begin{equation}
\begin{split}
E_{1,1}=&\Big \Vert \int_{t_{n-1}}^{t}(t-\tau)^{\alpha-1}\mathcal{I}_{\tau,M_n}^{\alpha-1,0}\big(\kappa(\tau,t)\big[\psi\big(\tau,u_{M_n}^n(\tau)\big)-\psi\big(\tau,u^n(\tau)\big)\big]\big)\mathrm{d}\tau  \Big \Vert_{L^2(\Omega_n)}^2
\\	&= \int_{\Omega_n}\Big\vert \int_{t_{n-1}}^{t}(t-\tau)^{\alpha-1}\mathcal{I}_{\tau,M_n}^{\alpha-1,0}\big(\kappa(\tau,t)\big[\psi\big(\tau,u_{M_n}^n(\tau)\big)-\psi\big(\tau,u^n(\tau)\big)\big]\big)\mathrm{d}\tau\Big\vert^2 \mathrm{d}t
\\&\leq \int_{\Omega_n}\Big(\int_{t_{n-1}}^{t}(t-\tau)^{\alpha-1}\mathrm{d}\tau\Big) \Big(\int_{t_{n-1}}^{t}(t-\tau)^{\alpha-1}\Big(\mathcal{I}_{\tau,M_n}^{\alpha-1,0}\big(\kappa(\tau,t)\big[\psi\big(\tau,u_{M_n}^n(\tau)\big)-\psi\big(\tau,u^n(\tau)\big)\big]\Big)^2\mathrm{d}\tau\Big) \mathrm{d}t
\\& \leq c_\alpha h_n^\alpha \int_{\Omega_n}\Big(\int_{t_{n-1}}^{t}(t-\tau)^{\alpha-1}\Big(\mathcal{I}_{\tau,M_n}^{\alpha-1,0}\big(\kappa(\tau,t)\big[\psi\big(\tau,u_{M_n}^n(\tau)\big)-\psi\big(\tau,u^n(\tau)\big)\big]\Big)^2\mathrm{d}\tau\Big) \mathrm{d}t
\\& \leq c_\alpha h_n^\alpha \int_{\Omega_n} (\dfrac{t-t_{n-1}}{2})^\alpha \sum_{j=0}^{M_n}\Big(\kappa(\tau_{n,j},t)\big[\psi\big(\tau_{n,j},u_{M_n}^n(\tau_{n,j})\big)-\psi\big(\tau_{n,j},u^n(\tau_{n,j})\big)\big]\Big)^2 w_{n,j}\,\mathrm{d}t.
\end{split}
\end{equation}
Moreover, due to the Lipschitz condition for the function $ \psi $ with respect to second variable, we obtain 
\begin{equation}
\begin{split}\label{e1}
E_{1,1}& \leq c_\alpha h_n^\alpha \gamma^2\int_{\Omega_n} (\dfrac{t-t_{n-1}}{2})^\alpha \sum_{j=0}^{M_n}\big[u_{M_n}^n(\tau_{n,j})-u^n(\tau_{n,j})\big]^2 w_{n,j}\mathrm{d}t
\\& \leq c_\alpha h_n^\alpha \gamma^2\int_{\Omega_n} \int_{t_{n-1}}^t (t-\tau)^{\alpha-1}\big[\mathcal{I}_{\tau,M_n}^{\alpha-1,0}\big(u_{M_n}^n(\tau)-u^n(\tau)\big)\big]^2 \mathrm{d}\tau\mathrm{d}t
\\& \leq c_\alpha h_n^\alpha \gamma^2
\int_{\Omega_n}\int_{t_{n-1}}^{t}\Big[(\mathcal{I}_{\tau,M_n}^{\alpha-1,0} u^n(\tau)-u^n(\tau))^2+(u^n(\tau)-u_{M_n}^n(\tau))^2\Big] (t-\tau)^{\alpha-1}\mathrm{d}\tau\mathrm{d}t
\\& \leq c_\alpha h_n^{\alpha} \gamma^2\big(h_n^{\alpha} \Vert e_n\Vert_{\Omega_n}^2+h_n^{2m+1}(M_n+1)^{-2m}\Vert \partial_t^{m}u^n\Vert^2_{L^2_{\chi^{\alpha-1,0}}(\Omega_n)}\big)
\\& \leq c_\alpha h_n^{\alpha}\gamma^2 \big(h_n^{\alpha} \Vert e_n\Vert_{\Omega_n}^2+h_n^{2m+1}(M_n+1)^{-2m}\Vert u^n\Vert^2_{H^m_{\chi^{\alpha-1,0}}(\Omega_n)}\big).
\end{split}
\end{equation}
In order to seek an upper bound for $ E_{1,2}$, we follow Eqs. \eqref{hn}, \eqref{change}, \eqref{change2}, Lipschitz condition and H\"{o}lder inequality to get that
\begin{equation}
\begin{aligned}
E_{1,2}&=\Big \Vert	\mathcal{I}_{M_n}^{t}\Big(\int_{t_{n-1}}^{t}(t-\tau)^{\alpha-1}\mathcal{I}_{\tau,M_n}^{\alpha-1,0}\big(\kappa(\tau,t)\big[\psi\big(\tau,u_{M_n}^n(\tau)\big)-\psi\big(\tau,u^n(\tau)\big)\big]\mathrm{d}\tau \big)\big)\Big\Vert
\\&=\int_{\Omega_n}	\Big[\mathcal{I}_{M_n}^{t}\Big(\int_{t_{n-1}}^{t}(t-\tau)^{\alpha-1}\mathcal{I}_{\tau,M_n}^{\alpha-1,0}\big(\kappa(\tau,t)\big[\psi\big(\tau,u_{M_n}^n(\tau)\big)-\psi\big(\tau,u^n(\tau)\big)\big]\big)\mathrm{d}\tau \Big]^2\mathrm{d}t
\\&=\frac{h_n}{2}\sum_{j=0}^{M_n}\Big(\int_{t_{n-1}}^{t_{n,j}}(t_{n,j}-\tau)^{\alpha-1}\mathcal{I}_{\tau,M_n}^{\alpha-1,0}\big(\kappa(\tau,t_{n,j})\big[\psi\big(\tau,u_{M_n}^n(\tau)\big)-\psi\big(\tau,u^n(\tau)\big)\big]\mathrm{d}\tau\Big)^2 w_{n,j}
\\&\leq ch_n \sum_{j=0}^{M_n}\big(\int_{t_{n-1}}^{t_{n,j}}(t_{n,j}-\tau)^{\alpha-1}\mathrm{d}\tau\big)\big(\int_{t_{n-1}}^{t_{n,j}}(t_{n,j}-\tau)^{\alpha-1}\Big(\mathcal{I}_{\tau,M_n}^{\alpha-1,0}\big(\kappa(\tau,t_{n,j})\big[\psi\big(\tau,u_{M_n}^n(\tau)\big)-\psi\big(\tau,u^n(\tau)\big)\big]\big)\Big)^2\mathrm{d}\tau\big) w_{n,j}
\\& \leq c_\alpha h_n^{\alpha+1}\sum_{j=0}^{M_n}(\dfrac{t_{n,j}-t_{n-1}}{2})^{\alpha}\sum_{i=0}^{M_n}\Big(\kappa(\tau_{n,i},t_{n,j})\big[\psi\big(\tau_{n,i},u_{M_n}^n(\tau_{n,i})\big)-\psi\big(\tau_{n,i},u^n(\tau_{n,i})\big)\big]\Big)^2 w_{n,i} w_{n,j}
\\&\leq c_\alpha h_n^{\alpha+1}\gamma^2 \sum_{j=0}^{M_n}(\dfrac{t_{n,j}-t_{n-1}}{2})^{\alpha}\sum_{i=0}^{M_n} \Big \vert u_{M_n}^n(\tau_{n,i})-u^n(\tau_{n,i}) \Big \vert^2 w_{n,i}\,w_{n,j}
\\& \leq c_\alpha h_n^{\alpha+1} \gamma^2 \sum_{j=0}^{M_n}w_{n,j}\Big((\dfrac{t_{n,j}-t_{n-1}}{2})^{\alpha}\sum_{i=0}^{M_n} \Big \vert u_{M_n}^n(\tau_{n,i})-u^n(\tau_{n,i}) \Big \vert^2 w_{n,i} \Big).
\end{aligned}
\end{equation}
Since $ \sum\limits_{j=0}^{M_n}w_{n,j}=2 $ and  Eq. \eqref{change2} and Lemma \ref{lem1}, the above inequality can be simplified as
\begin{equation}
\begin{aligned}\label{e2}
E_{1,2}&\leq  c_\alpha h_n^{\alpha+1}\gamma^2 \int_{t_{n-1}}^{t_{n,j}}\Big(\mathcal{I}_{\tau,M_n}^{\alpha-1,0} u^n(\tau)-u_{M_n}^n(\tau) \Big)^2 (t_{n,j}-\tau)^{\alpha-1}\mathrm{d}\tau\\
&\leq c_\alpha h_n^{\alpha+1}\gamma^2 \int_{t_{n-1}}^{t_{n,j}}\Big[(\mathcal{I}_{\tau,M_n}^{\alpha-1,0} u^n(\tau)-u^n(\tau))^2+(u^n(\tau)-u_{M_n}^n(\tau))^2\Big] (t_{n,j}-\tau)^{\alpha-1}\mathrm{d}\tau\\
& \leq c_\alpha h_n^{\alpha}\gamma^2 \big(h_n^{\alpha} \Vert e_n\Vert^2 + h_{n}^{2m+1}(M_n+1)^{-2m}
\Vert \partial_t^{m}u^n\Vert^2_{L^2_{\chi^{\alpha-1,0}}(\Omega_n)}\big)\\
& \leq c_\alpha h_n^{\alpha}\gamma^2 \big(h_n^{\alpha}  \Vert e_n\Vert_{\Omega_n}^2 +h_{n}^{2m+1}(M_n+1)^{-2m}\Vert u^n\Vert^2_{H^m_{\chi^{\alpha-1,0}}(\Omega_n)}\big).	
\end{aligned}
\end{equation}
Next, we derive an estimation for $ E_{1,3}.$
\begin{equation}
\begin{aligned}
E_{1,3}&= \Vert(\mathcal{I}-\mathcal{I}_{M_n}^t)\big(\int_{t_{n-1}}^t (t-\tau)^{\alpha-1}(\mathcal{I}_{\tau,M_n}^{\alpha-1,0}-\mathcal{I})\big(\kappa(\tau,t)\psi(\tau,u^n(\tau))\mathrm{d}\tau\big)\Vert_{\Omega_n}^2
\\
&=\int_{\Omega_n}\Big((\mathcal{I}-\mathcal{I}_{M_n}^t)\big(\int_{t_{n-1}}^t (t-\tau)^{\alpha-1}(\mathcal{I}_{\tau,M_n}^{\alpha-1,0}-\mathcal{I})\big(\kappa(\tau,t)\psi(\tau,u^n(\tau))\mathrm{d}\tau\big)\Big)^2\mathrm{d}t\\
&=\int_{\Omega_n}\Big((\mathcal{I}-\mathcal{I}_{M_n}^t)(\dfrac{t-t_{n-1}}{h_n})^{\alpha}\int_{\Omega_n} (t_n-\lambda)^{\alpha-1}(\mathcal{I}_{\lambda,M_n}^{\alpha-1,0}-\mathcal{I})\big(\kappa(\sigma(\lambda,t),t)\psi(\sigma(\lambda,t),u^n(\sigma(\lambda,t)))\big)\mathrm{d}\lambda\Big)^2\mathrm{d}t\\
&=\int_{\Omega_n}\Big((\mathcal{I}-\mathcal{I}_{M_n}^t)\int_{\Omega_n} (t_n-\lambda)^{\alpha-1}(\mathcal{I}_{\lambda,M_n}^{\alpha-1,0}-\mathcal{I})\big((\dfrac{t-t_{n-1}}{h_n})^{\alpha}\kappa(\sigma(\lambda,t),t)\psi(\sigma(\lambda,t),u^n(\sigma(\lambda,t)))\big)\mathrm{d}\lambda\Big)^2\mathrm{d}t.
\end{aligned}
\end{equation}
According to Lemma \ref{lemf} with $ m=1 $, we get that
\begin{equation}\label{ee}
\begin{split}
E_{1,3}\leq&ch_n^{2}(M_n+1)^{-2}\int_{\Omega_n}\Big(\int_{\Omega_n} (t_n-\lambda)^{\alpha-1}(\mathcal{I}_{\lambda,M_n}^{\alpha-1,0}-\mathcal{I})\Big[\partial_t\Big((\dfrac{t-t_{n-1}}{h_n})^{\alpha}\\&\times\kappa(\sigma(\lambda,t),t)\psi(\sigma(\lambda,t),u^n(\sigma(\lambda,t)))\Big)\Big]\mathrm{d}\lambda\Big)^2\mathrm{d}t\\\leq& ch_n^2(M_n+1)^{-2}\int_{\Omega_n}\Big[\partial_t((\dfrac{t-t_{n-1}}{h_n})^{2\alpha})\Big(\int_{\Omega_n} (t_n-\lambda)^{\alpha-1}(\mathcal{I}_{\lambda,M_n}^{\alpha-1,0}-\mathcal{I})\\
&~~~\times\big(\kappa(\sigma(\lambda,t),t)\psi(\sigma(\lambda,t),u^n(\sigma(\lambda,t)))\big)\mathrm{d}\lambda\Big)^2\\&~~~+(\dfrac{t-t_{n-1}}{h_n})^{2\alpha}\Big(\int_{\Omega_n} (t_n-\lambda)^{\alpha-1}(\mathcal{I}_{\lambda,M_n}^{\alpha-1,0}-\mathcal{I})\Big(\partial_t\big(\kappa(\sigma(\lambda,t),t)\psi(\sigma(\lambda,t),u^n(\sigma(\lambda,t)))\big)\Big)\mathrm{d}\lambda\Big)^2\Big]\mathrm{d}t\\
\leq& c h^2_{n}(M_n+1)^{-2}(D_{3,1}+D_{3,2}+D_{3,3}),
\end{split}
\end{equation}
where 
\begin{equation}
\begin{aligned}
D_{3,1}=&\int_{\Omega_n}(\partial_t(\dfrac{t-t_{n-1}}{h_n})^{2\alpha})\Big(\int_{\Omega_n} (t_n-\lambda)^{\alpha-1}(\mathcal{I}_{\lambda,M_n}^{\alpha-1,0}-\mathcal{I})\big(\kappa(\sigma(\lambda,t),t)\psi(\sigma(\lambda,t),u^n(\sigma(\lambda,t)))\big)\Big)^2\mathrm{d}\lambda\mathrm{d}t,\\
D_{3,2}=&\int_{\Omega_n}(\dfrac{t-t_{n-1}}{h_n})^{2\alpha}\Big(\int_{\Omega_n} (t_n-\lambda)^{\alpha-1}(\mathcal{I}_{\lambda,M_n}^{\alpha-1,0}-\mathcal{I})
\big(\partial_t\big(\kappa(\sigma(\lambda,t),t)\big)\psi(\sigma(\lambda,t),u^n(\sigma(\lambda,t)))\big)\Big)^2\mathrm{d}\lambda\mathrm{d}t,
\\
D_{3,3}=&\int_{\Omega_n}(\dfrac{t-t_{n-1}}{h_n})^{2\alpha}\Big(\int_{\Omega_n} (t_n-\lambda)^{\alpha-1}(\mathcal{I}_{\lambda,M_n}^{\alpha-1,0}-\mathcal{I})\big(\kappa(\sigma(\lambda,t),t)\partial_t\big(\psi(\sigma(\lambda,t),u^n(\sigma(\lambda,t))\big)\big)\Big)^2\mathrm{d}\lambda\mathrm{d}t.
\end{aligned}
\end{equation}
Now, we derive the upper bound for each above terms as follows
\begin{equation}
\begin{aligned}
D_{3,1}\leq& c_\alpha h_n^{-2\alpha}\int_{\Omega_n}(t-t_{n-1})^{2\alpha-1}\Big[\int_{\Omega_n} (t_n-\lambda)^{\alpha-1}\mathrm{d}\lambda \, . \,\int_{\Omega_n} (t_n-\lambda)^{\alpha-1}\Big((\mathcal{I}_{\lambda,M_n}^{\alpha-1,0}-\mathcal{I})\\
&~\times\big(\kappa(\sigma(\lambda,t),t)\psi(\sigma(\lambda,t),u^n(\sigma(\lambda,t)))\big)\Big)^2\mathrm{d}\lambda\Big]\mathrm{d}t\\
\leq&	c_\alpha h_n^{2m-\alpha}(M_n+1)^{-2m}\Vert\psi(.,u^n(.))\Vert^2_{H^m_{\chi^{\alpha-1,0}}(\Omega_n)}\int_{\Omega_n}(t-t_{n-1})^{2\alpha-1}\mathrm{d}t \\
\leq& c_\alpha h_n^{2m+\alpha}(M_n+1)^{-2m}\Vert\psi(.,u^n(.))\Vert^2_{H^m_{\chi^{\alpha-1,0}}(\Omega_n)},
\end{aligned}
\end{equation}
similarly,
\begin{equation}
\begin{aligned}
D_{3,2}
&\leq c {h_n^{-2\alpha}}\int_{\Omega_n}(t-t_{n-1})^{2\alpha}\Big[\int_{\Omega_n} (t_n-\lambda)^{\alpha-1}\mathrm{d}\lambda\, .\, \int_{\Omega_n} (t_n-\lambda)^{\alpha-1}\Big((\mathcal{I}_{\lambda,M_n}^{\alpha-1,0}-\mathcal{I})\\
&~~~\times\big(\psi(\sigma(\lambda,t),u^n(\sigma(\lambda,t)))\big)\Big)^2\mathrm{d}\lambda\Big]\mathrm{d}t\\
&\leq c_\alpha h_n^{2m+\alpha+1}(M_n+1)^{-2m}\Vert\psi(.,u^n(.))\Vert^2_{H^m_{\chi^{\alpha-1,0}}(\Omega_n)},	
\end{aligned}
\end{equation}
and finally,
\begin{equation}\label{d33}
\begin{aligned}
D_{3,3}
&\leq c {h_n^{-2\alpha}}\int_{\Omega_n}(t-t_{n-1})^{2\alpha}\Big[\int_{\Omega_n} (t_n-\lambda)^{\alpha-1}\mathrm{d}\lambda \int_{\Omega_n} (t_n-\lambda)^{\alpha-1}\Big((\mathcal{I}_{\lambda,M_n}^{\alpha-1,0}-\mathcal{I})\\
&~~~\times\big(\partial_t\big(\psi(\sigma(\lambda,t),u^n(\sigma(\lambda,t)))\big)\big)\Big)^2\mathrm{d}\lambda\Big]\mathrm{d}t\\
&\leq c_\alpha {h_n^{\alpha}}\int_{\Omega_n}\Big[ \int_{\Omega_n} (t_n-\lambda)^{\alpha-1}\Big((\mathcal{I}_{\lambda,M_n}^{\alpha-1,0}-\mathcal{I})\big(\partial_t\big(\psi(\sigma(\lambda,t),u^n(\sigma(\lambda,t)))\big)\big)\Big)^2\mathrm{d}\lambda\Big]\mathrm{d}t\\
&\leq c_\alpha {h_n^{2m+\alpha-2}}(M_n+1)^{2-2m}\int_{\Omega_n}\Big[ \int_{\Omega_n} (t_n-\lambda)^{\alpha-1}\big(\partial^{m-1}_\lambda\partial_t\big(\psi(\sigma(\lambda,t),u^n(\sigma(\lambda,t)))\big)\big)^2\mathrm{d}\lambda\Big]\mathrm{d}t\\
& \leq c_\alpha {h_n^{2m+\alpha-1}}(M_n+1)^{2-2m}  \int_{\Omega_n} (t_n-\lambda)^{\alpha-1}\sum_{i=0}^{m}\big(\partial^i_\tau\big(\psi(\tau,u^n(\tau))\big)\big)^2\mathrm{d}\tau\\
& \leq c_\alpha {h_n^{2m+\alpha-1}}(M_n+1)^{2-2m} \Vert\psi(.,u^n(.))\Vert^2_{H^m_{\chi^{\alpha-1,0}}(\Omega_n)}.
\end{aligned}
\end{equation}	
Consequently from Eqs. \eqref{ee}-\eqref{d33}, the upper bound for $ 
E_{1,3} $ can obtain as follows
\begin{equation}\label{e3}
\begin{aligned}
E_{1,3}\leq c_\alpha {h_n^{2m+\alpha+1}}(M_n+1)^{-2m} \Vert\psi(.,u^n(.))\Vert^2_{H^m_{\chi^{\alpha-1,0}}(\Omega_n)}.
\end{aligned}
\end{equation}
Therefore, using Eqs. \eqref{e0}, \eqref{e1}, \eqref{e2}, \eqref{e3} and \eqref{b4} we have
\begin{equation}\label{e3}
\begin{aligned}
\Vert E_{1}\Vert^2\leq c_\alpha h_n^{\alpha} \Big(h_n^{\alpha} \gamma^2 \Vert e_n\Vert^2_{\Omega_n} +h_{n}^{2m+1}(M_n+1)^{-2m}\big(\gamma^2\Vert u^n\Vert^2_{H^m_{\chi^{\alpha-1,0}}(\Omega_n)}+\Vert\psi(.,u^n(.))\Vert^2_{H^m_{\chi^{\alpha-1,0}}(\Omega_n)}\big)\Big).
\end{aligned}
\end{equation} 
\bibliographystyle{acm}
\bibliography{bibli31}
\end{document}